\documentclass[text=11pt]{article}

\usepackage{amsfonts,amsmath,amssymb,amsthm,commath,wasysym,mathrsfs}
\usepackage[utf8]{inputenc}
\usepackage[english]{babel}
\usepackage{bbm}
\usepackage{enumitem}
\usepackage{color}
\usepackage{comment}
\usepackage[margin=1in]{geometry}
\usepackage{setspace}
\usepackage{hyperref}
\usepackage{extarrows}
\usepackage{graphicx}
\setstretch{1.2}

\usepackage[sort]{cite}

\newtheorem{theorem}{Theorem}

\newtheorem{proposition}[theorem]{Proposition}
\newtheorem{corollary}[theorem]{Corollary}
\newtheorem{definition}{Definition}
\newtheorem{lemma}[theorem]{Lemma}
\theoremstyle{remark}
\newtheorem{remark}{Remark}

\numberwithin{theorem}{section}

\DeclareMathOperator{\Z}{\mathbb{Z}}
\DeclareMathOperator{\R}{\mathbb{R}}

\DeclareMathOperator{\N}{\mathbb{N}}

\newcommand{\E}[1]{\mathbb{E}\left[#1\right]}
\newcommand{\eps}{\varepsilon}

\DeclareMathOperator{\pr}{\mathbb{P}}

\newcommand{\email}[1]{\href{mailto:#1}{#1}}

\title{Uniqueness of the infinite tree in low-dimensional random forests}

\author{Noah Halberstam and Tom Hutchcroft}

\begin{document}

\maketitle

\begin{abstract}
The \emph{arboreal gas} is the random (unrooted) spanning forest of a graph in which each forest is sampled with probability proportional to $\beta^{\# \text{edges}}$ for some $\beta\geq 0$, which arises as the $q\to 0$ limit of the Fortuin-Kastelyn random cluster model with $p=\beta q$.
	We study the infinite-volume limits of the arboreal gas on the hypercubic lattice $\Z^d$, and prove that when $d\leq 4$, any translation-invariant infinite volume Gibbs measure contains at most one infinite tree almost surely. Together with the existence theorem of Bauerschmidt, Crawford and Helmuth (2021), this establishes that for $d=3,4$ there exists a value of $\beta$ above which subsequential weak limits of the $\beta$-arboreal gas on tori have exactly one infinite tree almost surely. We also show that the infinite trees of any translation-invariant Gibbs measure on $\Z^d$ are one-ended almost surely in every dimension.
	The proof has two main ingredients:
	First, we prove a resampling property for translation-invariant arboreal gas Gibbs measures in every dimension, stating that the restriction of the arboreal gas to the trace of the union of its infinite trees is distributed as the  uniform spanning forest on this same trace. 
	Second, we prove that the uniform spanning forest of any translation-invariant random connected subgraph of $\Z^d$ is connected almost surely when $d\leq 4$.  This proof also provides strong heuristic evidence for the conjecture that the supercritical arboreal gas contains infinitely many infinite trees in dimensions $d\geq 5$. Along the way, we give the first systematic and axiomatic treatment of Gibbs measures for models of this form including the random cluster model and the uniform spanning tree.
\end{abstract}

\tableofcontents

\section{Introduction}
For each $\beta\geq 0$, the \textbf{$\beta$-arboreal gas} (a.k.a.\ the \emph{weighted uniform forest model}) on a finite undirected graph $G=(V,E)$ is a random subgraph $A$ of $G$ with probability mass function
\begin{equation} \label{eq:free_arboreal}
\pr_{\beta}(A=F)=\begin{cases}
	(1/Z_\beta) \beta^{\lvert F\rvert} &F\subseteq G \text{ is a spanning forest}\\
	0 &\text{otherwise}
\end{cases},\qquad\qquad Z_\beta = \sum_{F\subseteq G \text{ a spanning forest}} \beta^{\lvert F\rvert},
\end{equation}
 where $\abs{F}$ denotes the cardinality of the edge set of $F$ and a \emph{spanning forest} of $G$ is an acyclic subgraph of $G$ containing every vertex. Equivalently, the law of $A$ is equal to the law of Bernoulli percolation on $G$ with parameter $p=\beta/(1+\beta)$ conditioned to be acyclic.
 It is also equal to the $q\rightarrow 0$ limit of the $q$-state random cluster model with $p/q$ converging to $\beta$ \cite{pemantle2000towards,jacobsen2005spanning}, while its $\beta\rightarrow\infty$ limit is equal to the uniform spanning tree when $G$ is connected. (When $\beta=1$, the model is a uniform random spanning forest of $G$; this value of the parameter plays no special role in our analysis.) The arboreal gas is also closely related to various supersymmetric spin systems, which has led it to receive substantial attention in the physics literature \cite{caracciolo2004fermionic,caracciolo2007grassmann,caracciolo2017spanning,deng2007ferromagnetic}.
Despite these connections, there are very few tools available to study the model and several very basic conjectures about its behaviour have remained open for twenty years \cite{MR2060630}. See \cite{bauerschmidt2021spin,swan2021superprobability} for surveys of the model and its connections to other topics.

Interest in the arboreal gas has grown significantly in recent years following the breakthrough works of Bauerschmidt, Crawford, Helmuth and Swan \cite{MR4218682} and
 Bauerschmidt, Crawford and Helmuth \cite{PercTransition}, who studied the model's percolation phase transition through the lens of spontaneous symmetry breaking in an equivalent supersymmetric hyperbolic sigma model:
 In \cite{MR4218682} they proved that the arboreal gas on $\Z^2$ never contains any infinite trees for any finite $\beta<\infty$, while in \cite{PercTransition} they proved that the arboreal gas on $\Z^d$ contains infinite trees for sufficiently large values of $\beta$ when $d\geq 3$. (Stochastic domination by percolation easily implies that the arboreal gas does not contain infinite trees for \emph{small} values of $\beta$ in any dimension.) Since it remains open whether the arboreal gas is stochasically monotone in $\beta$ or in its boundary conditions, one must be careful to note some important subtleties in both statements: it is unclear whether there exist ``canonical'' definitions of the ``infinite-volume arboreal gas'' on $\Z^d$, and it is also unknown whether the existence of an infinite tree is monotone in $\beta$. A more precise statement of the results of \cite{MR4218682,PercTransition} is that any subsequential infinite-volume limit of the model on $\Z^2$ (with arbitrary boundary conditions) does not contain an infinite tree, while for $d\geq 3$ there exists $\beta_0=\beta_0(d)$ such that if $\beta>\beta_0(d)$ then any subsequential limit of the model on large $d$-dimensional tori contain at least one infinite tree almost surely. The authors also establish strong quantitative control of the model, showing in particular that the finite-cluster two-point function continues to display critical-like behaviour in the supercritical regime. (Similar phenomena have also been shown to occur for the arboreal gas on the complete graph \cite{luczak1992components,MR3845513} and on regular trees with wired boundary conditions \cite{ray2021forests,easo2022wired}, where the analysis of the critical-like behaviour of finite/non-giant clusters is more complete.)

The analysis of \cite{MR4218682,PercTransition} tells us nothing about the \emph{number} of infinite trees in the arboreal gas, 
 which is the main subject of this paper.
The analogous question has, however, been extensively studied for the \emph{uniform spanning tree}. Indeed, the seminal paper of Pemantle \cite{MR1127715} established that the uniform spanning tree of $\Z^d$ has a well-defined infinite-volume limit that is independent of the choice of boundary conditions and that is almost surely connected, i.e.\ a single tree, if and only if $d\leq 4$. This theorem was greatly generalized by Benjamini, Lyons, Peres, and Schramm \cite{MR1825141} who proved that the 
\emph{wired uniform spanning forest} (i.e.\ the infinite-volume limit of the uniform spanning tree with wired boundary conditions)
 of an infinite graph $G$ is connected almost surely if and only if two independent random walks on $G$ intersect infinitely often. This is known to occur for $G=\Z^d$ if and only if $d\leq 4$ by a classical theorem of Erd\"os and Taylor \cite{MR126299}. Since the uniform spanning tree is the $\beta \to \infty$ limit of the arboreal gas, it is natural to conjecture (see \cite[Page 8]{PercTransition}) that the same transition from uniqueness to non-uniqueness in four dimensions holds for the arboreal gas as in the uniform spanning tree.

In this paper we verify the low-dimensional case of this conjecture. Our proof also lends strong heuristic evidence to the high-dimensional case as we discuss later in the introduction.

 \begin{theorem}\label{thm:uniqueness}
 	For each $\beta>0$ and $d\in\{3,4\}$, every translation-invariant $\beta$-arboreal gas Gibbs measure on the Euclidean lattice $\Z^d$ is supported on configurations that have at most one infinite tree.
 \end{theorem}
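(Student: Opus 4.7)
The plan is to combine the two main ingredients advertised in the abstract. Let $\mathbf{P}$ be a translation-invariant arboreal gas Gibbs measure, and assume for contradiction that $\mathbf{P}$ assigns positive probability to the event of containing at least two infinite trees; after passing to an ergodic component we may suppose that this event has probability one.

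Write $\mathcal{T}$ for the union of all infinite trees of the arboreal gas and let $H$ denote the induced subgraph of $\mathbb{Z}^d$ on the vertex set $V(\mathcal{T})$; this is a translation-invariant random subgraph of $\mathbb{Z}^d$. By the resampling property, conditional on $H$ the set of arboreal gas edges with both endpoints in $V(H)$ is distributed as $\mathrm{USF}(H)$. Since no finite cluster of the arboreal gas can meet $V(H)$, these edges are precisely the edges of $\mathcal{T}$, so the number of infinite trees of the arboreal gas equals the number of infinite trees of $\mathrm{USF}(H)$. In the low-dimensional regime $d \leq 4$, the USF-connectedness theorem (applied component-by-component, after a suitable Palm or unimodular reformulation) shows that this in turn equals the number of infinite connected components of $H$.

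It thus remains to show that $H$ has a unique infinite connected component almost surely. The strategy I would pursue is an indistinguishability or Burton--Keane style ergodic argument: if $H$ had at least two distinct infinite components with positive probability, translation invariance and ergodicity would upgrade this (via the usual $0/1/\infty$ dichotomy) to the existence of infinitely many such components, each of positive vertex density in $\mathbb{Z}^d$. Using the finite-energy-style rearrangement/resampling properties of arboreal gas Gibbs measures built axiomatically earlier in the paper together with the resampling property itself, one could then swap edges across the boundaries between infinite components of $H$ to produce either a trifurcation configuration or a mass-transport contradiction incompatible with translation invariance.

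The main obstacle is precisely this last uniqueness step for $H$. Unlike Bernoulli percolation, the arboreal gas is \emph{not} insertion tolerant because of the acyclicity constraint, so the classical Burton--Keane framework cannot be invoked directly. The heart of the difficulty is to design admissible local modifications of the forest that remain acyclic and respect the Gibbs specification while still producing the desired contradiction between ``multiple infinite components of $H$'' and translation invariance; this is where I expect the resampling property and the systematic Gibbs-measure toolkit developed earlier in the paper to do the heavy lifting.
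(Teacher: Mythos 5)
Your outline matches the paper's overall architecture: reduce via the resampling property to the uniform spanning forest of $\mathrm{Tr}(I_\infty)$, apply the low-dimensional USF-connectedness theorem, and then establish that $\mathrm{Tr}(I_\infty)$ has a single infinite component. You correctly identify the last step as the genuine obstacle and correctly diagnose that insertion tolerance fails, so that the classical Burton--Keane argument does not apply directly.

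Two remarks on how the paper actually closes that gap, because your expectations about where the heavy lifting happens are slightly off. First, the resampling property itself plays no role in proving connectedness of $\mathrm{Tr}(I_\infty)$: the paper instead proves a deletion-tolerance lemma (any finite configuration can be emptied) and a \emph{merge-tolerance} lemma (the restriction of $A$ to a finite connected box $H$ can be replaced by a spanning tree of $H/\Phi(H)$, which is acyclic-legal by construction), and combines these with the Aldous--Lyons bound that each component of a translation-invariant random subgraph of $\Z^d$ has at most two ends. Second, the gluing-to-a-trifurcation argument you sketch has a hidden prerequisite: to guarantee that joining three infinite trees actually produces a component with three ends, one needs to know in advance that each infinite tree is \emph{one-ended}, which is Theorem~\ref{thm:ends} and is itself proved first (using deletion tolerance plus the two-ends bound). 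Third, the paper phrases the Burton--Keane part through the \emph{augmented connectivity classes} — first showing there is at most one infinite class, then that its trace is connected — which sidesteps a subtle ergodicity issue: the paper cannot directly assert that extremal translation-invariant Gibbs measures are ergodic, only that they are trivial on translation-invariant events depending on the subgraph alone, which suffices because ``at least two infinite trees'' is such an event. So your roadmap is the paper's roadmap, but the execution of the hard step depends on the one-endedness theorem and the augmented-connectivity scaffolding, neither of which appears in your sketch.
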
 

 Here, \emph{an arboreal gas Gibbs measure} on $\Z^d$ is any subsequential weak limit of arboreal gas measures on finite subgraphs of $\Z^d$ with (possibly random) boundary conditions; such Gibbs measures always exist by compactness, and \emph{translation-invariant} Gibbs measures always exist by taking  e.g.\ subsequential limits of the model with periodic boundary conditions.
  Let us stress that the structure of the set of Gibbs measures for the arboreal gas is very poorly understood, and, unlike the uniform spanning tree and ($q\geq 1$) random cluster model, it is not clear whether the free and wired infinite-volume measures are well-defined independently of the choice of exhaustion, or, for that matter, whether there is more than one Gibbs measure for the model at any value of $\beta$.  Indeed, an important contribution of our paper is to develop the first systematic, axiomatic treatment of Gibbs measures for models of this form (where the weight of a configuration depends on its connectivity properties), as discussed in more detail below.

  \begin{remark}
The proof of Theorem~\ref{thm:uniqueness} also applies in dimensions $d\leq 2$, but the result is vacuous in this case since the model has no infinite clusters for any $\beta<\infty$ by the results of \cite{MR4218682}. (While the main theorem of that paper is written only for subsequential limits of the model with free boundary conditions, the proof applies  with arbitrary boundary conditions).
  \end{remark}
 
 Theorem \ref{thm:uniqueness}   has
  the following corollary in conjunction with the aforementioned results of \cite{PercTransition} (translation-invariance being an automatic feature of subsequential limits of automorphism-invariant models on tori).
 \begin{corollary}\label{thm:ex_plus_uniq}
 	Fix a dimension $d\in\{3.4\}$ and $\beta>0$, and for each $n\geq 1$ let $\pr_n$ be a $\beta$-arboreal gas measure on the $d$-dimensional torus of side length $n$. There exists a constant $\beta_0=\beta_0(d)>0$ such that if $\beta>\beta_0$ then every subsequential weak limit of the sequence $(\pr_n)_{n\geq 1}$ is supported on configurations that contain a unique infinite tree. 
 \end{corollary}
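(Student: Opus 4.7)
The plan is to derive Corollary~\ref{thm:ex_plus_uniq} as a direct combination of Theorem~\ref{thm:uniqueness} with the existence theorem of Bauerschmidt, Crawford and Helmuth recalled in the introduction. The only bridge required is to recognize any subsequential weak limit of $(\pr_n)_{n\geq 1}$ as a translation-invariant arboreal gas Gibbs measure on $\Z^d$; once this is in place, Theorem~\ref{thm:uniqueness} forces at most one infinite tree, \cite{PercTransition} forces at least one infinite tree for $\beta$ above the threshold $\beta_0(d)$, and the two bounds combine to give exactly one.

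To implement the identification, I would first lift each torus measure $\pr_n$ to a measure $\tilde\pr_n$ on $\{0,1\}^{E(\Z^d)}$ via the canonical quotient $\phi_n:\Z^d\to(\Z/n\Z)^d$, declaring the edge $\{x,y\}\in E(\Z^d)$ open in $\tilde\pr_n$ exactly when $\{\phi_n(x),\phi_n(y)\}$ is open under $\pr_n$. Since $\pr_n$ is invariant under the full torus translation group, $\tilde\pr_n$ is invariant under every translation of $\Z^d$, and compactness yields translation-invariant weak subsequential limits. For the Gibbs property, I would fix a finite box $B\subset\Z^d$ and take $n$ large enough that $\phi_n$ is injective on $B$: the conditional law of $\tilde\pr_n$ on the edges in $B$, given the torus configuration outside $\phi_n(B)$, is exactly an arboreal gas on $B$ with boundary conditions given by the (random) partition of $\partial B$ that records connectivity through the torus complement. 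Averaging over these boundary partitions and closing the edges outside $B$ produces a measure $\mu_{n,B}$ of the form contemplated in the definition of a Gibbs measure in the introduction, and taking $B=B_n$ to exhaust $\Z^d$ shows that $\mu_{n,B_n}$ and $\tilde\pr_n$ agree on any fixed local event for all sufficiently large $n$, so $\mu_{n,B_n}$ shares the subsequential weak limit $\pr$, exhibiting $\pr$ as a translation-invariant Gibbs measure.

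With $\pr$ so identified, Theorem~\ref{thm:uniqueness} gives at most one infinite tree almost surely, and the existence result of \cite{PercTransition} applied along the same subsequence gives at least one infinite tree almost surely once $\beta>\beta_0(d)$, completing the proof. I do not expect any serious obstacle here: both deep inputs are already established, and the remaining step is essentially bookkeeping that the paper's announced axiomatic treatment of Gibbs measures should handle in full detail; indeed the introduction already asserts that subsequential limits of periodic boundary condition measures furnish translation-invariant Gibbs measures, so the only thing to confirm is that this assertion is formalized before Corollary~\ref{thm:ex_plus_uniq} is invoked.
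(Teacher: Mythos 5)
Your proposal is correct and follows the route the paper intends: lift the torus measures to $\Z^d$, observe that the lifts are translation-invariant, realize any subsequential limit as a subsequential limit of finite-volume measures with random boundary conditions via the quotient map (so that Proposition~\ref{prop:alternative_char} identifies it as a translation-invariant Gibbs measure), and then combine Theorem~\ref{thm:uniqueness} with the existence result of \cite{PercTransition}. The paper does not write out a proof for this corollary (it is presented as immediate, with translation-invariance noted parenthetically), so your write-up simply makes explicit the bookkeeping that the paper leaves implicit.
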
 

\begin{remark}
Theorem~\ref{thm:uniqueness} also implies an analogue of Corollary~\ref{thm:ex_plus_uniq} for (subsequential) double limits of the model on the torus with an external field as considered in \cite{PercTransition}, where one first sends the size of the torus to infinity and then takes the external field to zero. This is because any such subsequential limit is a translation-invariant Gibbs measure for the model, as follows from a straightforward modification of the proof of Proposition~\ref{prop:alternative_char}.
\end{remark}

 
\noindent \textbf{About the proof.} We now briefly overview the proof of Theorem~\ref{thm:uniqueness}.
 Unlike \cite{MR4218682,PercTransition}, which exploit a non-probabilistic equivalence between the arboreal gas and a supersymmetric sigma model, our methods are purely probabilistic.  
 Our argument can be divided into two parts, which we now describe in turn. Both parts of the proof lead to intermediate results of independent interest.

\medskip

\noindent \textbf{Augmented Gibbs measures and the resampling property.} The first part of the paper, which is valid in any dimension, establishes a relationship between the infinite trees in the arboreal gas and the wired uniform spanning forest of a certain random subgraph of $\Z^d$. This part of the paper is mostly ergodic-theoretic in nature, and works by studying the properties of the space of translation-invariant Gibbs measures.

 \begin{theorem} \label{thm:resampling}
 	Let $d\geq 1$ and $\beta>0$ and let $A$ be distributed as a translation-invariant $\beta$-arboreal gas Gibbs measure on $\Z^d$. If we define $I_\infty$ to be the set of vertices that belong to the infinite components of $A$ and define $\mathrm{Tr}(I_\infty)$ to be the subgraph of $\Z^d$ induced by $I_\infty$ then the following hold:
    \begin{enumerate}
        \item $\mathrm{Tr}(I_\infty)$ is connected almost surely.
        \item The conditional distribution of the restriction of $A$ to $\mathrm{Tr}(I_\infty)$ given $I_\infty$ and the restriction of $A$ to $\mathrm{Tr}(I_\infty^c)$ is almost surely equal to the law of the wired uniform spanning forest of $\mathrm{Tr}(I_\infty)$.
    \end{enumerate}
 \end{theorem}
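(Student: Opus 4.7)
The plan is to prove the resampling identity by combining a finite-volume cluster-conditioning observation with the spatial Markov (DLR) property for arboreal gas Gibbs measures, which I expect the paper to have axiomatized in an earlier section. The combinatorial heart of the matter is the identity $|F|=|V(G)|-|\pi(F)|$ for a spanning forest $F$ of a finite graph $G$ with vertex partition $\pi(F)$: the weight $\beta^{|F|}$ therefore depends only on $\pi(F)$, so the finite-volume $\beta$-arboreal gas conditioned on its vertex partition is the product of independent uniform spanning trees on its clusters, with all of the $\beta$-dependence absorbed into the marginal law of the partition.

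For part~(ii), I would fix a finite box $\Lambda\subset\Z^d$ and apply the DLR property to identify the conditional law of $A|_{E(\Lambda)}$ given $A|_{E(\Lambda)^c}$ as a finite-volume $\beta$-arboreal gas on the quotient graph obtained from $\Lambda$ by wiring together the boundary vertices belonging to each connected component of the exterior configuration. Conditioning further on $I_\infty$ and on $A|_{\mathrm{Tr}(I_\infty^c)}$ pins down the entire finite-cluster structure and distinguishes those boundary-exterior components that escape to infinity from those that close up. Applying the finite-volume partition identity inside $\Lambda\cap\mathrm{Tr}(I_\infty)$, the resulting conditional law of $A|_{E(\Lambda)\cap\mathrm{Tr}(I_\infty)}$ is uniform over spanning sub-forests in which every vertex of $I_\infty\cap\Lambda$ is joined to at least one infinite boundary component. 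This is exactly the uniform spanning tree of the further quotient graph that identifies all infinite boundary components to a single ghost vertex; sending $\Lambda\uparrow\Z^d$ and recognising Wilson's algorithm identifies the limit as the wired uniform spanning forest of $\mathrm{Tr}(I_\infty)$.

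For part~(i), connectedness of $\mathrm{Tr}(I_\infty)$ should follow from an insertion-tolerance / Burton--Keane style argument. If $\mathrm{Tr}(I_\infty)$ had two or more components on an event of positive probability, then some finite set $W\subset I_\infty^c$ would separate two distinct components of $\mathrm{Tr}(I_\infty)$ locally; by DLR one could reassign the edges incident to $W$ so as to absorb $W$ into $I_\infty$ while merging the two separated components into a single infinite tree. The positive probability of this modification is intended to contradict a Burton--Keane-type counting of "merging" points under the translation-invariant measure. Alternatively, this might fall out directly from the part~(ii) identification together with the observation that the wired USF on a disconnected trace would be a product measure across components, which one can plausibly rule out using global features of the Gibbs measure.

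The main obstacle I anticipate is ensuring that the boundary condition in the $\Lambda\uparrow\Z^d$ limit is genuinely wired rather than free. Because the arboreal gas lacks an explicit reservoir at infinity, wiredness must be extracted from the fact that distinct infinite components of $A$ are asymptotically indistinguishable from the viewpoint of the boundary vertices of $\Lambda$ lying in $I_\infty$. Translation invariance and ergodic decomposition, combined with a careful choice of exhaustion and perhaps a stochastic sandwich between the free and wired USFs on $\mathrm{Tr}(I_\infty)$, should be the essential ingredients for pinning this down.
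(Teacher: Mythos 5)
Your plan captures the combinatorial heart of part~(ii) exactly as the paper does: the identity $|F|=|V|-|\pi(F)|$ makes the weight $\beta^{|F|}$ a function of the partition alone, so the finite-volume arboreal gas conditioned on its partition is a product of uniform spanning trees, and this is precisely the engine behind the paper's local resampling lemma (Proposition~\ref{prop:local_resampling_general}). Your Burton--Keane strategy for part~(i) is also the one the paper pursues (via merge and deletion tolerance and Proposition~\ref{prop:ends}). However, two of your steps do not go through in the form you describe.

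First, the ``DLR property'' you invoke does not exist in the naive form: the conditional law of $A\cap\Lambda$ given $A|_{E(\Lambda)^c}$ is \emph{not} a function of $A|_{E(\Lambda)^c}$, because one must also know which boundary vertices of $\Lambda$ are connected ``through infinity'' outside $\Lambda$, and this information is genuinely lost in an infinite-volume weak limit. This is precisely the quasilocality failure that forces the paper to introduce the augmented Gibbs framework (Section~\ref{section:arboreal_gas}), in which configurations are enriched by a boundary map $\Phi$ recording external connectivity. Your quotient-by-boundary-components construction implicitly assumes that boundary components ``that escape to infinity'' are identifiable from the exterior configuration, which is exactly what the augmented structure supplies and what you do not have for free. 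Without this enrichment, the conditioning on $I_\infty$ and $A|_{\mathrm{Tr}(I_\infty^c)}$ cannot be unpacked via a spatial Markov property, and the conditional probabilities you want may not even be well defined in a useful way.

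Second, and this is the obstacle you yourself flag, you are right that pinning down ``wired rather than free'' is the crux, but neither a direct exhaustion analysis, Wilson's algorithm, nor an ad hoc ergodic argument is the paper's mechanism. The paper sidesteps the issue entirely: Proposition~\ref{prop:local_resampling_general} only asserts that the conditional law is \emph{some} Gibbs measure for the UST on $\mathrm{Tr}(I_\infty)$ (possibly random, possibly a nontrivial boundary-condition mixture), and then one observes that $\mathrm{Tr}(I_\infty)$ is a translation-invariant random subgraph of $\Z^d$, hence a \emph{hyperfinite unimodular} random rooted graph, and Aldous--Lyons \cite[Proposition~8.14]{AL07} gives that its free and wired uniform spanning forests coincide. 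Combined with the sandwich fact that every UST Gibbs measure lies stochastically between the free and wired USFs (Lemma~\ref{lemma:infty_free_wired}), this forces uniqueness of the UST Gibbs measure on $\mathrm{Tr}(I_\infty)$, so whichever Gibbs measure the resampling identity produces must be the wired USF. Your ``stochastic sandwich'' instinct was the right one, but without the Aldous--Lyons input the sandwich does not close, and your proposal would leave a genuine gap at exactly the point you identified as the main difficulty.
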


The second part of this theorem can be rephrased equivalently in terms of resampling: If we first sample the arboreal gas $A$ then take $F'$ to be a random variable sampled according to the law of the wired uniform spanning forest on $\mathrm{Tr}(I_\infty)$, then the forest formed from $A$ by deleting all the infinite trees of $A$ and adding in the trees of $F'$ has the same distribution as $A$ itself.

In the process of proving this theorem we develop a new axiomatic framework for infinite-volume Gibbs measures of the arboreal gas, with the usual DLR theory of Gibbs measures \textit{not} being applicable to the arboreal gas due to a failure of `quasilocality' of the Hamiltonian.
 Our replacement for this theory, which is developed in Section~\ref{section:arboreal_gas}, 
  revolves around what we term \textit{augmented subgraphs}. Roughly speaking, this means that we enrich our random variables so that they include information about which vertices are connected to each other -- possibly ``through infinity'' -- outside of each finite set.  We remark that previous papers on related models including the random cluster model and the uniform spanning tree have sidestepped the development of such a framework (in part because they tend to be focused on the free and wired measures, which we do not know are well-defined for the arboreal gas), and we are optimistic that the framework we develop will also be useful in the future study of those models. See Remarks~\ref{remark:Gibbs} and \ref{remark:UST2} for further discussion.

 It will already be clear to experts that the first part of Theorem~\ref{thm:resampling} is a kind of Burton-Keane \cite{burton1989density} theorem for the induced subgraph $\operatorname{Tr}(I_\infty)$.
  More interestingly, the second part of the theorem also hides a second Burton-Keane argument `under the hood': To prove it, we first show that a similar resampling theorem holds where one replaces $I_\infty$ by the infinite classes of the \emph{augmented} connectivity relation (so that, \emph{a priori}, one must sample the wired uniform spanning forest separately on the trace of each such class), before employing an ``augmented'' Burton-Keane argument to prove that there is in fact only one infinite augmented connectivity class almost surely.

This argument clearly demonstrates the utility of our perspective on the arboreal gas in terms of augmented subgraphs and augmented Gibbs measures.
A further demonstration is given by the following theorem on the almost-sure one-endedness of infinite trees in the arboreal gas,
   which drops out neatly once the surrounding framework has been established.  Here, an infinite tree is said to be \textbf{one-ended} if there is exactly one infinite simple path starting at each vertex. The same theorem has also been established for the uniform spanning tree via very different methods~\cite{MR1127715,MR1825141}.
\begin{theorem}\label{thm:ends}
Let $d\geq 1$ and $\beta>0$ and let $A$ be distributed as a translation-invariant $\beta$-arboreal gas Gibbs measure on $\mathbb{Z}^d$. Then every infinite tree in $A$ is one-ended almost surely.
\end{theorem}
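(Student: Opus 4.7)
The plan is to derive Theorem~\ref{thm:ends} as a fairly direct consequence of Theorem~\ref{thm:resampling}, by reducing one-endedness of the infinite trees of $A$ to the corresponding statement for the wired uniform spanning forest on the random subgraph $\mathrm{Tr}(I_\infty)$. The case $d\leq 2$ is vacuous since the arboreal gas then contains no infinite tree (by \cite{MR4218682}), so we may assume $d\geq 3$ throughout.

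By Theorem~\ref{thm:resampling}(2), conditionally on $I_\infty$ and on the restriction of $A$ to $\mathrm{Tr}(I_\infty^c)$, the restriction of $A$ to $\mathrm{Tr}(I_\infty)$ is distributed as the WUSF of the connected graph $\mathrm{Tr}(I_\infty)$, and the infinite trees of $A$ are exactly the connected components of this restriction. Thus it suffices to show that, almost surely on $\{I_\infty\neq\emptyset\}$, every tree of the WUSF of $\mathrm{Tr}(I_\infty)$ is one-ended. Conditionally on the realization of the random environment $\mathrm{Tr}(I_\infty)$, this will follow from the theorem of Benjamini, Lyons, Peres and Schramm \cite{MR1825141} asserting that the WUSF of any transient graph has one-ended trees almost surely, a consequence of Wilson's algorithm rooted at infinity. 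Measurability concerns arising from applying this result over a random environment should be addressed routinely using the augmented-subgraph framework developed in the paper.

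The remaining task is to show that $\mathrm{Tr}(I_\infty)$ is almost surely transient on the event $\{I_\infty\neq\emptyset\}$. Since $\mathrm{Tr}(I_\infty)$ is a translation-invariant connected subgraph of $\Z^d$ containing a positive density of vertices on this event (by Theorem~\ref{thm:resampling}(1)), its transience for $d\geq 3$ should follow from a finite-energy unit flow constructed by averaging the standard polynomial-decay flows on $\Z^d$ over the translation-invariant environment and controlling the energy via the mass-transport principle. I expect this transience step to be the main technical obstacle, since translation-invariant connected random subgraphs of $\Z^d$ can in principle be quite irregular; however, only translation invariance and connectedness of $\mathrm{Tr}(I_\infty)$ are used, and the relevant finite-energy flow constructions are by now standard in the theory of random walks on invariant random subgraphs of $\Z^d$, so I anticipate the argument to go through with only minor adaptation.
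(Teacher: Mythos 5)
Your proposal has two genuine gaps, the more serious of which is the transience step. Transience of $\mathrm{Tr}(I_\infty)$ does \emph{not} follow from translation-invariance, connectedness, and positive density alone, even for $d\geq 3$. The paper itself points this out (in the remark following Theorem~\ref{thm:spanning_tree}): a translation-invariant random induced subgraph of $\Z^d$ can be almost surely rough-isometric to $\Z$, and hence recurrent, regardless of how large $d$ is. The flow-averaging heuristic you sketch cannot work: the deleted vertices can create bottlenecks that force any unit flow to concentrate, and indeed even transience of the infinite Bernoulli percolation cluster in $\Z^3$ is a substantial theorem (Grimmett--Kesten--Zhang) rather than a soft consequence of invariance. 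Without transience the strategy collapses, since a recurrent $\mathrm{Tr}(I_\infty)$ can have a two-ended UST (e.g.\ $\Z$). A secondary issue is the citation: \cite{MR1825141} does not assert that the WUSF of \emph{every} transient graph has one-ended components; that theorem requires additional hypotheses (transitivity, a unimodular random rooted graph, or isoperimetric conditions), and the unimodular-random-rooted-graph version is what would be needed here.

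There is also a structural problem with this route in the context of this paper: the paper's proof of Theorem~\ref{thm:resampling} \emph{uses} Theorem~\ref{thm:ends} (via Lemma~\ref{lem:unique_inf_class2}, which feeds into Proposition~\ref{prop:bulk}), so deriving Theorem~\ref{thm:ends} from Theorem~\ref{thm:resampling} would be circular unless one first reproves the resampling theorem independently. The paper's actual argument is quite different and much more self-contained: working with an extremal translation-invariant augmented Gibbs measure, it (i) invokes the Aldous--Lyons result (Proposition~\ref{prop:ends}) that components have at most two ends, (ii) observes from the augmented Gibbs property that the two endpoints of any edge $e\in A$ are a.s.\ \emph{not} related by $\Phi(e)$, so that deleting an internal edge of a two-ended tree produces a new infinite augmented connectivity class, (iii) combines this with deletion tolerance and Lemma~\ref{lem:unique_inf_class} (at most two infinite augmented classes) to bound the number of two-ended trees by one, and (iv) uses deletion tolerance again together with ergodicity of $\pi_*\mathbb{Q}_\beta$ to rule out a single two-ended tree. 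This avoids any appeal to transience or to one-endedness results for the WUSF, and works for all $d\geq 1$.
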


\begin{remark}
Theorem~\ref{thm:resampling} allows us to import `for free' various ergodic-theoretic theorems from the uniform spanning tree  to the arboreal gas. For example, the indistinguishability theorem of \cite{hutchcroft2017indistinguishability} can be immediately applied to get that the infinite trees of the arboreal gas are indistinguishable when they exist, and a similar statement holds for the ``multicomponent indistinguishability theorem'' of \cite{hutchcroft2020indistinguishability}. This may be useful for studying more refined properties of the arboreal gas in high dimensions, as the multicomponent indistinguishability theorem plays an important role in the study of the adjacency structure of trees in the high-dimensional uniform spanning forest \cite{hutchcroft2019component,benjamini2011geometry}.
\end{remark}

\medskip

\noindent \textbf{Connectivity of the UST in low-dimensional unimodular random graphs.} Theorem~\ref{thm:resampling} reduces the study of the infinite trees in the arboreal gas to the study of the uniform spanning forest of the induced subgraph $\mathrm{Tr}(I_\infty)$, which is a translation-invariant random subgraph of $\Z^d$. When $d\geq3$ and $\beta$ is very large we have by the results of \cite{PercTransition} that $I_\infty$ has density very close to $1$ (at least for subsequential limits of the arboreal gas on tori), so that it is reasonable to think of $\mathrm{Tr}(I_\infty)$ as a ``small perturbation'' of the original hypercubic lattice $\Z^d$. It seems very unlikely that this small perturbation would lead to any drastic difference in the behaviour of the random walk, which supports the conjecture that the number of infinite trees in the trees in the arboreal gas and uniform spanning tree should be the same, at least for $\beta$ very large. Unfortunately it \emph{is} possible in general for a high-density translation-invariant random induced subgraph of $\Z^d$ to have very different large-scale random walk behaviour than that of the full lattice, so that to implement this argument rigorously in the high-dimensional case one must use features of the arboreal gas beyond its translation invariance. The problem is made particularly delicate by the slow decay of correlations in the model \cite{PercTransition}, which make it difficult to compare $\mathrm{Tr}(I_\infty)$ to a better-understood model such as Bernoulli site percolation.

While we have not yet been able to circumvent this problem in the high-dimensional case, the low-dimensional case is more tractable since, informally, ``the monotonicity goes in the right direction'': we think of the connectivity of the wired uniform spanning forest (which, as previously mentioned, is equivalent to two independent random walks intersecting infinitely often almost surely) as a ``small graph'' property, so that it is plausibly preserved when taking ``reasonable'' subgraphs.
Unfortunately, despite this intuition, it is still not literally true that every connected subgraph of $\Z^d$ has a connected wired uniform spanning forest when $d\leq 4$. Indeed, the subgraph of $\Z^3$ induced by the union of the origin with the two half-spaces $\{(x,y,z):x>0\}$ and $\{(x,y,z):x<0\}$ has two components in its wired uniform spanning forest almost surely. Moreover,
 it follows from a theorem of Thomassen \cite[Theorem 3.3]{10.1214/aop/1176989708} that $\Z^d$ contains a transient tree for every $d\geq 3$, and it is easily seen that the wired uniform spanning forest of any such tree has infinitely many components almost surely.

The second part of the paper, which is specific to the low-dimensional case, establishes that, in contrast to these examples, the wired uniform spanning forest is always connected almost surely in any \emph{translation-invariant} random subgraph of $\Z^d$ when $d\leq 4$.  We state a simple special case of the relevant theorem now, with a significant generalization given in Theorem~\ref{thm:spanning_tree_general}.

 \begin{theorem} \label{thm:spanning_tree}
Let $d\leq 4$, let $S$ be a translation-invariant random subset of $\Z^d$ and let $\mathrm{Tr}(S)$ be the subgraph of $\Z^d$ induced by $S$. Then the wired uniform spanning forest of each infinite connected component of $\mathrm{Tr}(S)$ is connected almost surely.
 \end{theorem}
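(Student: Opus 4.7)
My plan is to apply the criterion of Benjamini, Lyons, Peres, and Schramm: for a fixed connected graph $G$, the wired uniform spanning forest of $G$ is connected almost surely if and only if two independent simple random walks on $G$ started from any vertex have almost surely intersecting ranges. Applying this conditionally on $S$, the theorem reduces to showing that, for almost every realization of $S$ and on each infinite component $\mathcal{C}$ of $\mathrm{Tr}(S)$, two independent simple random walks on $\mathcal{C}$ from the same vertex have ranges with nonempty intersection almost surely.

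The main step is to show that the annealed expected size of this intersection is infinite. For the full lattice one has the classical Erd\H{o}s--Taylor-type computation $\mathbb{E}[|X[0,\infty)\cap Y[0,\infty)|] \asymp \sum_{z\in\mathbb{Z}^d} G_{\mathbb{Z}^d}(0,z)^2 \asymp \int_1^\infty r^{3-d}\,dr$, which is infinite precisely when $d\leq 4$, with the location of this divergence threshold matching the conjectured behaviour in higher dimensions. My strategy is to lower-bound the analogous annealed quantity on $\mathrm{Tr}(S)$ via electrical network comparison: the random walk on $\mathrm{Tr}(S)$ is the network random walk on $\mathbb{Z}^d$ with conductances set to zero off $\mathrm{Tr}(S)$, so removing these edges only increases effective resistances and makes visits to nearby vertices more likely. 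Combined with translation invariance of $S$ through a mass-transport identity, this should produce an annealed lower bound comparable to the divergent sum above.

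Once infinite annealed expectation is in hand, I would upgrade to almost-sure nonempty intersection in two steps. First, a Paley--Zygmund second-moment argument applied to intersection counts truncated to large boxes gives that the intersection is nonempty with probability at least some $\delta>0$ uniformly in starting vertex and box size. Second, by the ergodicity of the diagonal translation action on the joint system $(S,X,Y)$, the event that the walks ever meet is translation invariant, hence has probability zero or one, and the lower bound $\delta$ forces it to be one.

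The main technical obstacle is the middle step: controlling random walks on the potentially very irregular subgraph $\mathrm{Tr}(S)$, which may have severe bottlenecks and is not in general quasi-isometric to $\mathbb{Z}^d$. Direct Rayleigh monotonicity only compares effective resistances, not the Green's function sums governing range intersections; a carefully chosen mass-transport identity expressing the annealed intersection expectation as a sum of pairwise hitting probabilities that can be individually compared to their counterparts on $\mathbb{Z}^d$ is likely the right tool, and the fact that the sum would become finite in $d\geq 5$ is precisely what gives the heuristic evidence for the conjectured non-uniqueness in those dimensions.
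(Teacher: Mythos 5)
Your high-level framing (reduce to the intersection property via BLPS, then establish the intersection property on the random subgraph) matches the paper's, but the two proposed middle steps each have substantive gaps, and the paper proceeds quite differently in both places.

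\textbf{First gap: the electrical/Green's function comparison does not work and is not replaced by anything concrete.} Rayleigh monotonicity controls effective resistances $\mathcal{R}(u\leftrightarrow\infty)$ and hence escape probabilities and diagonal Green's function values $G(u,u)$, but it does \emph{not} give a one-sided comparison of off-diagonal Green's functions $G(u,v)$ after deleting edges, and the sum $\sum_v G(u,v)^2$ can certainly \emph{decrease} (for instance, deleting edges can disconnect $v$ from $u$ or send the walk off to transience before it ever reaches a region near $v$). You acknowledge this yourself, but the replacement you gesture at --- ``a carefully chosen mass-transport identity expressing the annealed intersection expectation as a sum of pairwise hitting probabilities that can be individually compared to their counterparts on $\mathbb{Z}^d$'' --- is not a proof and, more importantly, there is no reason such a pointwise comparison should hold: $\mathrm{Tr}(S)$ can be extremely irregular and is not in general quasi-isometric to $\mathbb{Z}^d$. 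The paper circumvents Green's function estimates entirely by using a \emph{maximal Markov-type inequality} for the embedding $\phi$ into $\mathbb{Z}^d$ (Proposition~\ref{prop:hyper_markov}, valid because unimodular random rooted subgraphs of $\Z^d$ are hyperfinite). This gives a deterministic diffusive upper bound $\mathbb{E}[\max_{i\le n}\|\phi(Y_i,Y_0)\|_\infty^2]\le C^2 n$, which forces the walk to spend the bulk of each time window $[4^{n-1},4^n)$ inside the box of radius $\asymp 2^n$, and a pigeonhole/occupation-time count then yields $\Omega(2^{(4-d)n})$ expected intersections with a second walk per dyadic scale. No comparison to the full lattice's Green's function is needed.

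\textbf{Second gap: you aim to prove the wrong conditional expectation is infinite, and the Paley--Zygmund/ergodicity upgrade is not justified.} You target the annealed expectation $\mathbb{E}[\#\{i,j: X_i=Y_j\}]$, or at best its version conditional on $(G,o)$; the paper explicitly remarks (see the remark after Proposition~\ref{prop:EToP}) that it is \emph{not known} whether infiniteness of $\mathbb{E}[\cdot\mid(G,o)]$ suffices. What the paper actually proves and uses is the stronger statement that $\mathbb{E}[\#\{i,j:X_i=Y_j\}\mid(G,o),Y]=\infty$ a.s., i.e.\ conditional on one realized trajectory $Y$, and the passage from this conditional first-moment statement to almost-sure intersection (Proposition~\ref{prop:EToP}) is not a second-moment/Paley--Zygmund argument at all: it is a time-reversal identity for the last intersection time (Lemma~\ref{lem:lex_max_identity}) combined with the mass-transport principle and the second-moment-of-degree hypothesis. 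A naive Paley--Zygmund argument on $\mathrm{Tr}(S)$ runs into exactly the problem you flag: there is no available control on the second moment of the intersection count, which can overwhelm the first moment squared on an irregular graph. Your ergodicity step is also shaky as stated: the event ``the two walks started at $o$ ever meet'' is not translation-invariant (the walks are rooted at a fixed origin, not stationary under spatial shifts), so diagonal ergodicity of $(S,X,Y)$ does not directly yield a zero--one law. The paper instead upgrades ``intersect at least once a.s.\ from $o$'' to ``intersect infinitely often'' by unimodularity (the statement then holds from every vertex) and iterated application of the Markov property, which is the clean way to do this.

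In short, the reduction to the intersection property and the observation that this is where $d\le 4$ enters are correct, but both of your intermediate steps --- the Green's function lower bound via network monotonicity, and the Paley--Zygmund plus ergodicity upgrade --- would need to be replaced by different tools, and the paper's Markov-type inequality and time-reversal/mass-transport argument are precisely the tools that make the argument go through.
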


 The proof of this theorem draws mostly on random walk techniques, and is inspired in particular by previous work on \emph{collisions} of random walks in unimodular random graphs \cite{MR3399814,MR4364738}.

\begin{remark}
Translation-invariant random subgraphs of $\Z^d$ do \emph{not} always have disconnected wired uniform spanning trees when $d\geq 5$, even when these graphs are induced by connected sets of vertices. (Indeed, starting with a random space-filling curve one can construct such a translation-invariant random induced subgraph that is a.s.\ rough-isometric to $\Z$.) This suggests that a more delicate approach is required to understand the number of infinite trees in the high-dimensional arboreal gas.
\end{remark}

 \begin{remark}
We believe that the theory we develop in this paper can be applied with minor modifications to prove analogous uniqueness theorems for a number of similar random forest models in dimensions $d\leq 4$. For example, it should apply to the variant of the arboreal gas in which the forest is required to contain at most one non-singleton component, which is a kind of `dilute spanning tree' model.\footnote{This model always has infinite-volume limits containing infinite trees when $\beta>1$, even when $d=1$. Indeed, in this regime the contribution to the partition function from a single spanning tree is larger than that from all configurations with a sublinear number of edges, so that most the contribution to the partition function comes from configurations with a linear number of edges. 
The actual critical value should be smaller than $1$.
 This is related to the results of \cite{dereudre2022fully}.} Indeed, this model should actually be significantly simpler to study via our methods than the arboreal gas, since (in the language of Section~\ref{section:arboreal_gas}) its Gibbs augmentations trivially have at most one non-singleton augmented connectivity class almost surely. The main (easily addressed) complication is that the definition of an augmented Gibbs measure needs to be modified so that the random variables are also enriched with the data of which finite subgraphs have a non-singleton component in their complement, and which boundary vertices (if any) belong to this component. 
  We do not pursue such generalizations further in this paper.
 \end{remark}
 
\begin{remark}
All our methods generalize immediately to arbitrary transitive graphs of at most four-dimensional volume growth. The resampling theorem, Theorem~\ref{thm:resampling}, can be extended much more generally to every \emph{amenable} transitive graph. One noteworthy consequence of this is as follows: In \cite{MR4218682}, Bauerschmidt, Crawford, Helmuth, and Swan prove that the arboreal gas on $\Z^2$ cannot have a \emph{unique} infinite tree for any $\beta<\infty$ (since the probability that $x$ is connected to $y$ is small when $x-y$ is large), then deduce that there are no infinite trees almost surely using a Burton-Keane argument \emph{on the model's planar dual}. The first part of their argument does not use planarity, and also applies to quasi-transitive graphs such as slabs which are quasi-isometric to $\R^2$ but not planar. An appropriate generalization of our Theorem~\ref{thm:uniqueness} can be used to replace the second part of their argument, so that the entire result holds without planarity.
\end{remark}

 \section{Gibbs measures and augmented subgraphs} \label{section:arboreal_gas}

 In this paper, we are primarily concerned with weak limits of finite-volume arboreal gas measures on infinite graphs $G$. In order to proceed, it is desirable to have an axiomatic characterization of these infinite-volume measures, which will make it easier to apply ergodic-theoretic arguments. Unfortunately, the usual DLR--Gibbs theory (as described in e.g.\ \cite{friedli2017statistical,le2008introduction}) is not applicable to these measures: given a limit measure $\mu$, a random variable $A\sim \mu$ and a finite box $H\subset G$, the law of the restriction of $A$ to $H$ conditioned on $A\cap H^c$ cannot, \emph{a priori}, be expressed as a function of $A\cap H^c$. This is because when we take the limit, connectivity information is lost and we do not know which infinite trees in $A$ should be regarded as connected ``through infinity'' to which other infinite trees.
 
 In this section, we develop an \textit{augmented Gibbs framework} which rectifies this problem. A central idea is to make the appropriate long-range connectivity information available locally by enriching the space that our random variables are defined in.
  In the next section, we use this framework to prove the resampling property for translation-invariant Gibbs measures, Theorem \ref{thm:resampling}. 

\begin{remark}
\label{remark:Gibbs}
  As mentioned earlier, we believe that the theory of augmented Gibbs measures we develop here should be useful to the study of 
   other probabilistic statistical physics models such as the uniform spanning tree and random cluster model, which are also incompatible with the standard DLR framework for the same reasons as in our setting. Indeed, is is notable that no abstract theory of Gibbs measures has previously been developed for these models despite their broad popularity. For example, 
   in Glazman and Manolescu's work on the structure of the set of Gibbs measures for the random cluster model on $\Z^2$ 
    \cite{glazman2021structure}, the authors consider only an (\emph{a priori}) special class of Gibbs measures in which infinite clusters are always considered to be connected at infinity. As discussed in \cite[Remark 1.5]{glazman2021structure}, considering only this restricted class of Gibbs measures has various downsides, including that this class is not (\emph{a priori}) preserved under planar duality. Our definition of Gibbs measures for models of this form is given strong justification by the fact that it coincides with the set of all possible limits of the models in finite-volume, with arbitrary boundary conditions, and is more general than that of \cite{glazman2021structure}.
     The two notions can be shown to coincide for the random cluster model in the translation-invariant case, but it is currently unclear whether the two notions will coincide without the assumption of translation-invariance. For the uniform spanning tree, a version of the Gibbs property was proposed by Sheffield \cite{sheffield2006uniqueness}, which has the non-standard property that it describes the conditional distribution of the restriction of the tree to a finite set given both what is outside the set \emph{and how the points on the boundary of the set are connected inside the set}; our definition is more standard in that it describes the distribution of what is inside the set given information only about what is outside. Further discussion of how our theory applies to the UST appears in Remarks~\ref{remark:UST1} and \ref{remark:UST2}.
 \end{remark}

 \subsection{Definitions}
 We begin by setting up some necessary notation which will be used throughout the rest of the paper before defining augmented subgraphs and arboreal gas Gibbs measures.
 
 \medskip

 \textbf{Graph notation.}
 For any graph $G=(V,E)=(V[G],E[G])$, and vertices $u,v\in V[G]$, we write $u\sim_G v$ if $\{u,v\}\in E[G]$, write $u\xleftrightarrow{G}v$ if the vertices $u$ and $v$ are in the same connected component of $G$, and write $G(v)$ for the connected component of $G$ containing $v$. 
 For any graph $G$, write $\mathcal{S}(G)$ for the set of subgraphs of $G$ (which we take to be pairs of subsets of $V$ and $E$) and write $\mathcal{S}^f(G)$ for the set of finite subgraphs of $G$. We will always assume that all graphs $G$ are \textit{locally finite}, meaning that all their vertex degrees are finite.  For any graph $G$, an increasing sequence of finite subgraphs of $G$ whose union is the entire graph is called an \textbf{exhaustion} of $G$.

\medskip

 \textbf{Finite-volume arboreal gas Gibbs measures.}
Let $G=(V,E)$ be a  countable, locally finite graph $G=(V,E)$ and let $H\subset G$ be a finite subgraph of $G$. We define the \textbf{inner vertex boundary} $\partial H$
 to be the set of vertices of $H$ that are incident to an edge of $G$ that does not belong to $H$. (If $H$ is an \emph{induced} subgraph of $G$ then $\partial H$ is equal to the set of vertices of $H$ that are adjacent to a vertex of $V[G]\setminus V[H]$.)
  For each set $S$ we write $\mathcal{P}[S]$ for the set of equivalence relations on $S$, which we encode as functions $\phi:S\times S\rightarrow\{0,1\}$ such that $\phi(x,y)=1$ if and only if $x$ and $y$ are in the same equivalence class. For each $\phi\in\mathcal{P}(\partial H)$ and subgraph $H^\prime\subseteq H$, we write $H^\prime/\phi$ for the graph constructed by taking $H^\prime$ and identifying the sets of vertices in $V[H^\prime]\cap\partial H$ which belong to the same equivalence class of $\phi$, deleting any self-loops created by this identification. These equivalence relations will serve as boundary conditions, keeping track of connectivity outside of $H$.   We write $\mathcal{F}(H)$ for the set of spanning forests of $H$, i.e.\ the set of acyclic subgraphs of $H$ containing every vertex of $H$ and, given an equivalence relation $\phi\in \mathcal{P}[\partial H]$, we say a forest $F\in \mathcal{F}(H)$ \textbf{extends} $\phi$ if $F/\phi$ is acyclic. We write $\mathcal{F}(H,\phi)=\mathcal{F}(G,H,\phi)$ for the subset of forest subgraphs of $H$ which extend $\phi$ and say that such a forest is an \textbf{$(H,\phi)$-maximal spanning forest} if it contains every vertex of $H$ and there is no edge in $E[H]$ which can be added to $F$ to yield another element of $\mathcal{F}(H,\phi)$. We write $\mathcal{F}_T(H,\phi)$ for the set of $(H,\phi)$-maximal spanning forests; when $H/\phi$ is connected, maximal spanning forests of $H/\phi$ are the same thing as spanning trees of $H/\phi$.
 

 For each $\beta\in[0,\infty)$, we define the \textbf{finite-volume $\beta$-arboreal gas Gibbs measure} on a finite subgraph $H$ of $G$ with \textbf{boundary condition} $\phi\in\mathcal{P}(\partial H)$ by
 \[
 \pr_{H,\beta}^\phi(F)= \pr_{G,H,\beta}^\phi(F)=\begin{cases}
   (1/Z_\beta^\phi) \beta^{\lvert F\rvert} &F\in\mathcal{F}(H,\phi)\\
   0 &\text{otherwise}
 \end{cases},\qquad\qquad Z_\beta^\phi = \sum_{F\in\mathcal{F}(H,\phi)} \beta^{\lvert F\rvert}.
 \]
 (In particular, when $\beta=0$ this measure puts all its mass on the subgraph of $H$ with no edges.)
 We remark that if every equivalence class of $\phi$ contains just a single element then this measure coincides with the free arboreal gas measure on $H$. We also define the
  \textbf{finite-volume $\infty$-arboreal gas Gibbs measure} on $H$ with boundary condition $\phi$ by
 \[
 \pr_{H,\infty}^\phi(F)= \pr_{G,H,\infty}^\phi(F)=\begin{cases}
   \abs{\mathcal{F}_T(H,\phi)}^{-1}  &F\in\mathcal{F}_T(H,\phi)\\
   0 &\text{otherwise},
 \end{cases}
 \]
 which is the weak limit of $\pr_{H,\beta}^\phi$ as $\beta\rightarrow\infty$ and can be identified with the uniform measure on maximal spanning forests of $H/\phi$. In particular, when $H/\phi$ is connected, this measure can be identified with the uniform spanning tree measure on $H/\phi$.
 More generally, given $\beta \in [0,\infty]$, a finite subgraph $H\in \mathcal{S}^f(G)$, and a probability measure $\nu$ on $\mathcal{P}(\partial H)$, we write $\pr_{H,\beta}^\nu$ for the measure with probability mass function
\[
 \pr_{H,\beta}^\nu(F) = \sum_{\varphi \in \mathcal{P}(\partial H)} \nu(\varphi) \pr_{H,\beta}^\phi(F),
\]
which we call a \textbf{finite-volume $\beta$-arboreal gas Gibbs measure} with boundary condition $\nu$. Probabilistically, this measure is the law of the configuration obtained by first sampling a random boundary condition according to the (arbitrary) distribution $\nu$, then sampling the arboreal gas with this boundary condition. Considering random boundary conditions in this way has the advantage that it automatically makes all the sets of measures we consider convex. 


The finite-volume version of the Gibbs property for these measures is as follows: Given a finite subgraph $H$ and a probability measure $\nu$ on the set of equivalence relations on $\partial H$, let $\phi$ be a random variable with law $\nu$ and, given $\phi$, let $A$ be a random variable with conditional law $\mathbb{P}_{H,\beta}^\phi$, so that $A$ has marginal law $\mathbb{P}_{H,\beta}^\nu$. If $H'$ is a subgraph of $H$ and we define an equivalence relation $\Phi(H')$ on $\partial H'$ by taking $u$ and $v$ to be in the same class of $\Phi(H')$ if they are connected in $(A\setminus E[H'])/\phi$, then
\begin{equation}
\label{eq:finite_Gibbs}
\mathbb{P}^\nu_{H,\beta}( A \cap H' = \cdot \mid A \setminus E[H'], \phi ) = \mathbb{P}^{\Phi(H)}_{H',\beta}( A = \cdot\,).
\end{equation}
In words, the conditional law of $A \cap H'$ given $A\setminus E[H']$ and $\phi$ is equal to $\mathbb{P}^{\Phi(H)}_{H',\beta}$.
This identity is an immediate consequence of the definitions, and encapsulates the intuition that what happens outside of $H'$ affects the distribution of $A$ inside $H'$ only in so far as it determines which boundary vertices of $H'$ are connected to each other \emph{outside} of $H'$. Note that \eqref{eq:finite_Gibbs}  is exactly the same Gibbs property enjoyed by the random cluster model; most of the theory we develop in the rest of this section will also apply straightforwardly to any other model satisfying this same form of the Gibbs property in finite volume.

 \medskip

 We now move on to defining the space of \emph{augmented subgraphs}, which allow us to meaningfully extend the Gibbs property \eqref{eq:finite_Gibbs} to infinite-volume measures. To avoid trivialities, we take care to make sure all relevant definitions continue to work as expected in the case that $G$ is finite or disconnected.

\medskip

\noindent
 \textbf{The space of augmented subgraphs.}
Let $G=(V,E)$ be a locally finite graph. We define an \textbf{augmented subgraph} of $G$ to be a pair $(S,\Phi)$ where $S$ is a subgraph of $G$ and $\Phi$ is a collection $(\Phi(H):H \in \mathcal{S}^f(G))$, where $\Phi(H)$ is an equivalence relation on $\partial H$ for each $H \in \mathcal{S}^f(G)$, satisfying the consistency condition
 \begin{equation}\tag{Con}
 \begin{array}{c}
 \text{For every $H,K\in\mathcal{S}^f(G)$ with $H\subset K$ and $u,v\in \partial H$, } \\
 \text{$u$ and $v$ are  related in $\Phi(H)$ if and only if they are connected in $(S \cap K\setminus E[H])/\Phi(K)$,}\end{array}
 \end{equation}
 where vertices that do not belong to a subgraph are considered to not be connected to any other vertex in that subgraph.
We interpret $\Phi(H)$ as dictating connectivity \emph{outside} of $H$: the consistency condition states that if two vertices in the boundary of $H \subseteq K$ are connected outside of $H$ according to $\Phi(H)$, then these two vertices must also be connected outside of $H$ according to $S\cap K$ and $\Phi(K)$, and vice versa.
 Given an augmented subgraph $(S,\Phi)$ of $G$,  
  we define the \textbf{augmented connectivity relation} by $u\xleftrightarrow{(S,\Phi)}v:=\Phi(\{u,v\})(u,v)$, where here $\{u,v\}$ is the graph consisting of the vertices $u,v$ and no edges, so that, by consistency, 
  \[u \xleftrightarrow{(S,\Phi)} v \text{ if and only if $u$ is connected to $v$ in $(H \cap S)/\Phi(H)$}\] 
  for each (and hence every) finite subgraph $H\in\mathcal{S}^f(G)$ containing both $u$ and $v$.

  We write $\mathcal{A}(G)$ for the space of augmented subgraphs of $G$, which we endow with its natural product topology and associated Borel sigma-algebra, so that $\mathcal{A}(G)$ is compact and the projection map $\pi:\mathcal{A}(G)\to \mathcal{S}(G)$ defined by $\pi:(S,\Phi)\mapsto S$ is continuous. 
 We call an augmented subgraph $(S,\Phi)$ with underlying subgraph $S$ an \textbf{augmentation} of $S$, and call $\Phi$ the \textbf{boundary map} of the augmentation $(S,\Phi)$. Every subgraph $S$ of $G$ admits boundary maps $\Phi_\mathrm{free}=\Phi_\mathrm{free}^S$ and $\Phi_\mathrm{wired}=\Phi_\mathrm{wired}^S$ defined by
  \begin{align}
\Phi_\mathrm{free}(H)(u,v)&=1 \iff \hspace{1.5cm}\text{$u$ and $v$ are connected in $S \setminus E[H]$}\label{eq:free_augmentation}\\
\text{and}\qquad \Phi_\mathrm{wired}(H)(u,v)&=1 \iff \begin{array}{c}  \text{$u$ and $v$ are connected in $S \setminus E[H]$ or both} \\\text{ belong to infinite connected components of $S\setminus E[H]$},\end{array} \label{eq:wired_augmentation}
  \end{align}
which are distinct whenever $S$ has more than one infinite connected component or more than one end. We call the resulting augmentations $(A,\Phi_\mathrm{free})$ and $(A,\Phi_\mathrm{wired})$ the \textbf{free} and \textbf{wired} augmentations of $A$. (We warn the reader that the relationship between these augmentations and the usual terminology for free and wired Gibbs measures for the uniform spanning tree is not as straightforward as one might hope; see Remark~\ref{remark:UST2}.) These augmentations are extremal in the sense that the equivalence classes of an arbitrary augmentation contain those of the free augmentation and are contained in those of the wired augmentation. In general a subgraph may admit a very large number of distinct augmentations.

\medskip
 
\noindent \textbf{Augmentations are determined by their tails.} We now discuss a key property of augmented subgraphs that will be used throughout our analysis. Let $(S,\Phi)$ be an augmented subgraph of a locally finite graph $G$. The consistency property implies that if $H$ and $H'$ are two finite subgraphs of $G$ with $H\subseteq H'$, then $\Phi(H)$ is determined by $\Phi(H')$ and $S$. In particular, if for each finite subgraph $H$ of $G$ we define
\[
\Phi_H = (\Phi(K) : K \text{ is a finite subgraph of $G$ containing $H$}), 
\]
then the full augmented subgraph $(S,\Phi)$ is completely determined by the pair $(S,\Phi_H)$ for each finite subgraph $H$ of $G$. \emph{This gives us a well-defined notion of what it means to add or delete finitely many edges from an augmented subgraph $(S,\Phi)$:} Given an augmented subgraph $(S,\Phi)$ and two disjoint finite sets of edges $A$ and $B$, we define an augmented subgraph $(S,\Phi) \cup A \setminus B$ by taking $H$ to be a finite subgraph of $G$ containing both $A$ and $B$ and extending $(S \cup A \setminus B, \Phi_H)$ to a full augmented subgraph by consistency; it is easily verified that this definition does not depend on the choice of finite subgraph $H$.

\medskip

\noindent \textbf{Infinite-volume arboreal gas Gibbs measures.} We now define infinite-volume Gibbs measures for the arboreal gas. (NB: Although we emphasize the infinite-volume case, the definition also works in finite volume.)
 Given a random augmented subgraph $(A,\Phi)$ of a countable, locally finite graph $G$ and a finite subgraph $H$ of $G$, we write $\mathcal{G}_H$ for the sigma-algebra generated by $A \setminus E[H]$ and $\Phi_H$, which represents the data of the augmented subgraph that is determined `outside of $H$'.
 
 \begin{definition}\label{definition:IVGM}
   Let $G$ be a countable, locally finite graph and fix $\beta\in [0,\infty]$. We say that a probability measure $\pr_\beta$ on $\mathcal{F}(G)$ is a \textbf{$\beta$-arboreal gas Gibbs measure of $G$} if there exists 
     a probability measure $\mathbb{Q}_\beta$ on $\mathcal{A}(G)$, such that the following hold:
   \begin{enumerate}
      \item The pushforward $\pi_* \mathbb{Q}_\beta$ is equal to $\pr_\beta$. In other words, if $(A,\Phi)\sim \mathbb{Q}_
      \beta$ then $A\sim \mathbb{P}_\beta$.
      \item  If $(A,\Phi)$ is a random variable distributed as $\mathbb{Q}_\beta$ and $H$ is a finite subgraph of $G$, then the conditional law of $A\cap H$ given $\mathcal{G}_H$ is almost surely equal to $\pr_{H,\beta}^{\Phi(H)}$. 
   \end{enumerate}
   We will refer to the second property as the \textbf{augmented Gibbs property}. We call any measure $\mathbb{Q}_\beta$ which satisfies these two properties a \textbf{Gibbs augmentation} of $\mathbb{P}_\beta$, and call any measure $\mathbb{Q}_\beta$ on $\mathcal{A}(G)$ satisfying the second of these two properties an \textbf{augmented $\beta$-arboreal gas Gibbs measure}.
   
 \end{definition}

 We will often refer to $\infty$-arboreal gas Gibbs measures as \textbf{uniform spanning tree Gibbs measures} or \textbf{uniform maximal spanning forest Gibbs measures} (the former terminology not always being appropriate when $G$ is not connected).

 \medskip

This axiomatic definition has the advantage that it is well-suited to ergodic-theoretic techniques. That it is an \emph{appropriate} definition is justified by the following alternative characterisation of infinite-volume arboreal gas measures, as presented in the introduction. 

 \begin{proposition}\label{prop:alternative_char}
 Let $G$ be an infinite, countable, locally finite graph.
   For each $\beta\in[0,\infty]$, the $\beta$-arboreal gas Gibbs measures of $G$ are exactly the subsequential weak limits of finite-volume $\beta$-arboreal gas Gibbs measures -- with possibly random boundary conditions -- on exhaustions of $G$.
 \end{proposition}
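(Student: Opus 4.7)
The plan is to prove both inclusions separately. For the easier direction, suppose $\pr_\beta$ is a $\beta$-arboreal gas Gibbs measure of $G$ with Gibbs augmentation $\mathbb{Q}_\beta$. Fix any exhaustion $(G_n)_{n\geq 1}$ of $G$ and let $\nu_n$ denote the law of $\Phi(G_n)$ under $\mathbb{Q}_\beta$. Applying the augmented Gibbs property with $H=G_n$ immediately shows that the marginal law of $A \cap G_n$ under $\mathbb{Q}_\beta$ is the mixture $\pr_{G_n,\beta}^{\nu_n}$. Since $A\cap G_n \to A$ in the product topology as $n\to\infty$, the measures $\pr_{G_n,\beta}^{\nu_n}$, viewed as measures on $\mathcal{F}(G)$ by extending trivially outside $G_n$, converge weakly to $\pr_\beta$, so that $\pr_\beta$ is in fact a full weak limit of finite-volume arboreal gas Gibbs measures with random boundary conditions.

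For the reverse inclusion, suppose $\pr_\beta$ is the weak limit of $\pr_{G_n,\beta}^{\nu_n}$ along a subsequence. We construct a Gibbs augmentation by lifting each $\pr_{G_n,\beta}^{\nu_n}$ to a measure $\mathbb{Q}_n$ on $\mathcal{A}(G)$ and then using compactness of $\mathcal{A}(G)$ to extract a subsubsequential weak limit $\mathbb{Q}_\beta$. The lift is defined in the natural way: given a sample $(\phi,F)$ with $\phi\sim\nu_n$ and $F\sim\pr_{G_n,\beta}^\phi$, declare $\Phi(H)(u,v)=1$ for $u,v\in \partial H$ whenever they are connected in $F/\phi$ by a path that uses no edge of $H$, where vertices of $V[G]\setminus V[G_n]$ are treated as isolated in $F/\phi$. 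Routine case analysis shows that this boundary map satisfies the consistency condition, and one checks that $\Phi(G_n)=\phi$, so that the boundary condition $\phi$ is recovered as a coordinate of the augmentation. Continuity of the projection $\pi\colon\mathcal{A}(G)\to\mathcal{S}(G)$ then ensures $\pi_*\mathbb{Q}_\beta=\pr_\beta$.

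It remains to verify that $\mathbb{Q}_\beta$ satisfies the augmented Gibbs property. Because every $\Phi(K)$ with $K\supseteq H$ is a measurable function of $(F\setminus E[H],\phi)$, and conversely $\phi = \Phi(G_n)$ is itself a coordinate of $\Phi_H$ whenever $G_n\supseteq H$, the sigma-algebra $\mathcal{G}_H$ coincides with $\sigma(F\setminus E[H],\phi)$ under $\mathbb{Q}_n$; hence each $\mathbb{Q}_n$ inherits the augmented Gibbs property for every finite $H\subseteq G_n$ directly from the finite-volume identity \eqref{eq:finite_Gibbs}. For a fixed finite $H\subseteq G$ and bounded continuous functions $f$ of $A\cap H$ and $g$ on $\mathcal{G}_H$, we then have
\[
\mathbb{E}_{\mathbb{Q}_n}\bigl[f(A\cap H)\cdot g\bigr] = \mathbb{E}_{\mathbb{Q}_n}\bigl[\pr_{H,\beta}^{\Phi(H)}(f)\cdot g\bigr]
\]
for all sufficiently large $n$. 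The crucial observation is that $\Phi(H)$ takes values in the finite set $\mathcal{P}(\partial H)$, so $(A,\Phi)\mapsto \pr_{H,\beta}^{\Phi(H)}(f)$ is a bounded continuous function on $\mathcal{A}(G)$; passing to the weak limit along our subsubsequence and then applying a monotone class argument to extend from continuous to arbitrary bounded measurable test functions yields the augmented Gibbs property for $\mathbb{Q}_\beta$. The main obstacle in this argument is purely one of bookkeeping --- verifying the consistency condition for the natural augmentations and correctly tracking the sigma-algebras at each step --- rather than any substantive probabilistic difficulty.
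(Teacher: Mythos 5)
Your proof is correct and follows essentially the same strategy as the paper's: for the forward direction, project a Gibbs augmentation onto an exhaustion via the augmented Gibbs property; for the converse, lift each finite-volume measure to a measure on augmented subgraphs by reading off boundary connectivity from $(F/\phi)\setminus E[H]$, take a subsequential weak limit by compactness, and verify the augmented Gibbs property from the finite-volume identity \eqref{eq:finite_Gibbs}. The main difference is one of detail rather than method: the paper compresses the limit verification into ``one can check from the definitions,'' whereas you spell out the consistency check for the lifted boundary map, the identification $\mathcal{G}_H = \sigma(F\setminus E[H],\phi)$ under $\mathbb{Q}_n$, the continuity of $(A,\Phi)\mapsto\pr_{H,\beta}^{\Phi(H)}(f)$ (via finiteness of $\mathcal{P}(\partial H)$), and the monotone-class extension --- all of which are correct.
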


 We note that for any $\beta\in[0,\infty]$, any exhaustion $(H_n)_{n\geq 0}$ of $G$ and any sequence of probability measures on boundary conditions $(\nu_n)_{n\geq 1}$, the sequence of measures $(\pr_{H_n,\beta}^{\nu_n})_{n\geq 1}$ will always have at least one subsequential weak limit by compactness of $\mathcal{A}(G)$.

 \begin{proof}[Proof of Proposition~\ref{prop:alternative_char}]
   Fix $\beta\in [0,\infty]$. We first check that  any $\beta$-arboreal gas Gibbs measure $\pr_\beta$ is a subsequential weak limit of finite-volume $\beta$-arboreal gas Gibbs measures with possibly random boundary conditions. Let $(A,\Phi)$ be a random variable with the law of a Gibbs augmentation of $\pr_\beta$ and let $(H_n)_{n\geq 1}$ be any exhaustion of $G$. By the Gibbs property, the law of $A$ restricted to $H_n$ is equal to the law of $\pr_{H_n,\beta}^{\,\nu_n}$, where $\nu_n$ is the law of $\Phi(H_n)$, and so the weak limit of the sequence $(\pr_{H_n,\beta}^{\,\nu_n})_{n\geq 1}$ of finite volume $\beta$-arboreal gas Gibbs measures with random boundary conditions is equal to $\pr_\beta$.
   
   We now show the converse.
   Let $(H_n)_{n\geq 1}$ be an exhaustion of $G$, let $(\nu_n)_{n\geq 1}$ be a sequence of probability measures on equivalence relations on $\partial H_n$, and suppose that the sequence $(\mathbb{P}_{H_n,\beta}^{\nu_n})$ converges to some limit measure $\mathbb{P}_\beta$. For each $n\geq 1$ let $\phi_n$ be an equivalence relation on $\partial H_n$ with law $\nu_n$, let $A_n$ be a random variable with conditional law $\pr_{H_n,\beta}^{\phi_n}$ (so that $A_n$ has marginal law $\pr_{H_n,\beta}^{\nu_n}$), and for each finite subgraph $H$ of $G$ define an equivalence relation $\Phi_n(H)$ on $\partial H$ by setting
\[
\Phi_n(H)(u,v)= \begin{cases}  \mathbbm{1}(\text{$u$ and $v$ are connected in $A_n/\phi_n(H)$}) & H \subseteq H_n\\
1 & \text{otherwise.}
\end{cases}
\]
   By compactness, taking a subsequence if necessary, $(A_n,\Phi_n)$ converges weakly to some random variable $(A,\Phi)$, where $A$ has law $
   \mathbb{P}_\beta$. Using \eqref{eq:finite_Gibbs}, one can check from the definitions that $\Phi$ is almost surely an augmentation of $A$ and that the law of $(A,\Phi)$ is a Gibbs augmentation of $\mathbb{P}_\beta$, completing the proof. 
 \end{proof}

\noindent\textbf{The uniform spanning tree.} 
Let $G$ be an infinite, connected, locally finite graph. For each finite subgraph $H$ of $G$, we define the \textbf{free boundary condition} $\mathrm{f}=\mathrm{f}_H\in\mathcal{P}(\partial H)$ to be the equivalence relation whose classes all have cardinality one and define \textbf{wired boundary condition} $\mathrm{w}=\mathrm{w}_H$ on $H$ to be the equivalence relation on $\partial H$ in which all points are related. It was proven implicitly by Pemantle \cite{MR1127715} that if $(H_n)_{n\geq 1}$ is any exhaustion of $G$ by finite subgraphs then the two sequences $(\mathbb{P}_{H_n,\infty}^{\mathrm{f}})_{n\geq 1}$ and $(\mathbb{P}_{H_n,\infty}^{\mathrm{w}})_{n\geq 1}$ have well-defined weak limits that do not depend on the choice of exhaustion $(H_n)_{n\geq 1}$; these limits are known as the \textbf{free} and \textbf{wired uniform spanning forest} measures on $G$. It follows from the $\beta=\infty$ case of Proposition~\ref{prop:alternative_char} that if $G$ is a connected, locally finite graph then the free and wired uniform spanning forests on $G$ are indeed Gibbs measures for the uniform spanning tree on $G$. Moreover, these two measures are always stochastically maximal and minimal among the set of all Gibbs measures for the uniform spanning tree on $G$ as made precise in the following lemma.

 \begin{lemma}\label{lemma:infty_free_wired}
    Let $G$ be a connected, locally finite graph and let $\pr$ be a Gibbs measure for the uniform spanning tree on $G$. 
    Then $\pr$ is stochastically dominated by the free uniform spanning forest on $G$ and stochastically dominates the wired uniform spanning forest on $G$. In particular, if the free and wired uniform spanning forest of $G$ coincide then $G$ has a unique Gibbs measure for the uniform spanning tree.
 \end{lemma}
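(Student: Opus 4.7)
The plan is to use the augmented Gibbs property to reduce the claim to a finite-volume monotonicity statement for uniform spanning tree measures in their boundary conditions, and then pass to the limit using Pemantle's theorem. Let $\mathbb{Q}$ be a Gibbs augmentation of $\mathbb{P}$ and let $(A,\Phi)\sim\mathbb{Q}$. Order partitions of any finite set by refinement, writing $\phi\preceq\phi'$ when every class of $\phi$ is contained in a class of $\phi'$; under this order the free boundary condition $\mathrm{f}_H$ is minimal and the wired boundary condition $\mathrm{w}_H$ is maximal, so that we trivially have $\mathrm{f}_H\preceq\Phi(H)\preceq\mathrm{w}_H$ for every finite subgraph $H$ of $G$.

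The key input is the stochastic monotonicity of the finite-volume UST in its boundary conditions: if $\phi\preceq\phi'$ as equivalence relations on $\partial H$, then $\mathbb{P}_{H,\infty}^{\phi'}$ is stochastically dominated by $\mathbb{P}_{H,\infty}^{\phi}$ as measures on subsets of $E[H]$. This reduces by iteration to the case where $\phi'$ is obtained from $\phi$ by merging two classes with representatives $u,v\in\partial H$. Introducing a virtual edge $e$ between $u$ and $v$ in $H/\phi$ and using the standard edge-contraction bijection, one identifies $\mathbb{P}_{H,\infty}^{\phi'}$ (viewed on $E[H]$) with the uniform spanning tree of the enlarged graph conditioned on $e$ being in the tree, and $\mathbb{P}_{H,\infty}^{\phi}$ with the same UST conditioned on $e$ \emph{not} being in the tree. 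The required stochastic domination then follows from the Feder--Mihail negative correlation property of uniform spanning tree measures.

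Given this monotonicity, the augmented Gibbs property implies that for any finite subgraph $H$ and any increasing event $\mathcal{E}$ depending only on edges of $H$,
\[
\mathbb{P}_{H,\infty}^{\mathrm{w}_H}(\mathcal{E})\;\leq\;\mathbb{E}\bigl[\mathbb{P}_{H,\infty}^{\Phi(H)}(\mathcal{E})\bigr]\;=\;\mathbb{P}(\mathcal{E})\;\leq\;\mathbb{P}_{H,\infty}^{\mathrm{f}_H}(\mathcal{E}).
\]
Any increasing cylinder event on $\mathcal{F}(G)$ depends on only finitely many edges, so this bound applies to all sufficiently large members of any exhaustion of $G$. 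Letting $H\to G$ along such an exhaustion, Pemantle's theorem (recalled just before the statement of the lemma) gives convergence of the left- and right-hand sides to $\mathrm{WUSF}(\mathcal{E})$ and $\mathrm{FUSF}(\mathcal{E})$ respectively, yielding $\mathrm{WUSF}\leq_{\mathrm{st}}\mathbb{P}\leq_{\mathrm{st}}\mathrm{FUSF}$ on $\mathcal{F}(G)$; the ``in particular'' clause is then immediate. The main obstacle is the monotonicity of UST measures in boundary conditions: this is classical but requires the Feder--Mihail negative correlation theorem to upgrade a comparison of edge marginals to full stochastic domination. Once this is in hand, the rest of the argument is a routine application of the augmented Gibbs property and a limit.
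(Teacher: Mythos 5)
Your proposal is correct and follows essentially the same route as the paper's proof: both reduce to the stochastic monotonicity of finite-volume uniform-spanning-tree measures under refinement of the boundary partition, established via the Feder--Mihail negative association theorem, and then pass to the infinite-volume limit using the fact (Pemantle/Proposition~\ref{prop:alternative_char}) that the free and wired finite-volume measures converge. The only cosmetic difference is that you spell out the ``add a virtual edge and condition on $e\in T$ versus $e\notin T$'' reduction behind the monotonicity, which the paper delegates to a citation, and that you invoke the augmented Gibbs property directly in place of the paper's appeal to Proposition~\ref{prop:alternative_char}; these are the same ideas.
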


 \begin{proof}
    Let $(V_n)_{n\geq 1}$ be an increasing sequence of subsets of $V[G]$ converging to $V[G]$, and for each $n\geq 1$, let $H_n=\mathrm{Tr}[V_n]$ be the subgraph of $G$ induced by $V_n$.
It follows from the negative associated theorem of Feder and Mihail \cite{feder1992balanced} (see also \cite[Theorem 4.6 and Exercise 10.8]{LP:book}) that the measure $\pr^{\phi}_{H_n,\infty}$ is stochastically decreasing in $\phi$ in the sense that if $\phi_1,\phi_2$ are two equivalence relations with $\phi_1$ a refinement of $\phi_2$ then $\pr^{\phi_1}_{H_n,\infty}$ stochastically dominates $\pr^{\phi_2}_{H_n,\infty}$. It follows in particular that every measure of the form $\pr^\nu_{H_n,\infty}$ is stochastically dominated by $\mathbb{P}_{H_n,\infty}^{\mathrm{f}}$ and stochastically dominates $\mathbb{P}_{H_n,\infty}^{\mathrm{w}}$. The claim follows by taking limits in light of this and Proposition~\ref{prop:alternative_char}.
 \end{proof}

\begin{remark}\label{remark:UST1}
Pemantle \cite{MR1127715} established implicitly that the free and wired uniform spanning forests of $\Z^d$ coincide for every $d\geq 1$.
  In general, a graph $G$ has a unique Gibbs measure for the uniform spanning tree if and only if it does not admit any non-constant harmonic functions of finite Dirichlet energy \cite{MR1825141}, which holds in particular for every amenable transitive graph \cite[Corollary 10.9]{LP:book} as well as in many nonamenable examples. See \cite[Chapter 10]{LP:book} for detailed background.
 \end{remark}

 \begin{remark}\label{remark:UST2}
Naively, one might like to say that the augmentation we need to put on the free uniform spanning forest to make its law into an augmented Gibbs measure is precisely the free augmentation as defined in \eqref{eq:free_augmentation}, while the augmentation we need to wired uniform spanning forest to make its law into an augmented Gibbs measure is precisely the wired augmentation as defined in \eqref{eq:wired_augmentation}. This intuition is correct when $G$ is, say, a $3$-regular tree, but is false in general. Indeed, consider the hypercubic lattice $\Z^d$, where the free and wired uniform spanning forest measures coincide for every dimension $d\geq 1$ as discussed above. In one dimension (where the spanning tree is just the entire line), the correct augmentation to place on the infinite-volume uniform spanning tree is the free augmentation; using the wired augmentation does not work, since under this augmentation the conditional probability that any edge is present given that all other edges are present would be zero, not one. In dimensions two to four the infinite-volume limit is supported on configurations with a single one-ended tree, and there is no choice in how to define the augmentation. In dimension five and higher, where there are infinitely many one-ended trees, the correct augmentation to use is the wired augmentation; using the free augmentation does not work since the Gibbs property would imply that an edge connecting two distinct infinite trees must be present with probability~$1$. (In other examples, such as the free uniform spanning forest on the free product $\Z^5 * \Z_2$, neither the free nor the wired augmentations are appropriate.) As a historical note, let us remark that this subtlety in how to correctly define the Gibbs property for uniform spanning forests led to an error in the work of Burton and Pemantle \cite{burton1993local} which was not discovered until a decade later by Lyons \cite{lyons2005asymptotic} and corrected in the work of Sheffield \cite{sheffield2006uniqueness}.
 \end{remark}

 \subsection{Translation-invariant Gibbs measures}

 In this section we refine our focus to translation-invariant Gibbs measures on $\Z^d$. In particular, we will discuss how each such Gibbs measure can be decomposed in terms of \emph{extremal} translation-invariant Gibbs measures, which have better ergodicity properties.
 In the usual DLR--Gibbs formalism for (quasi)local systems such as the Ising model, it is a standard result that any Gibbs measure can be decomposed as a mixture of \textit{tail-trivial} Gibbs measures, which assign probability $0$ or $1$ to any event in the tail-sigma algebra. Indeed, in this framework, the tail-trivial Gibbs measures are exactly the extremal points of the convex set of Gibbs measures and so the desired decomposition is an immediate corollary of Choquet's theorem. An analogous result also holds for translation-invariant Gibbs measures (see Remark \ref{remark:TT}), which can always be decomposed into a mixture of \textit{ergodic} translation-invariant Gibbs measures; these are the measures that assign probability $0$ or $1$ to all translation-invariant events.
While the first of these results translates directly to our setting, we were not able to prove the direct analogue of the second result, and instead prove a slightly weaker result that will suffice for our later applications.

\medskip

\textbf{Tail triviality.} We begin by discussing \emph{tail triviality}, where the relevant theory holds for arbitrary graphs.
Let $G$ be a countable, locally finite graph, and recall that for each finite subgraph $H$ of $G$ we define $\mathcal{G}_H$ to be the sigma-algebra of Borel sets $E$ in $\mathcal{A}(G)$ such that an augmented subgraph $(S,\Phi)$'s belonging to $E$ is determined by $S\setminus H$ and $\Phi_H:=(\Phi(H'):H'$ a finite subgraph of $G$ containing $H)$. We define the \textbf{tail sigma-algebra} $\mathcal{T}$ on $\mathcal{A}(G)$ to be the intersection $\bigcap_H\mathcal{G}_H$ taken over all finite subgraphs $H$ of $G$. 

\begin{lemma}\label{lem:condition_on_tail}
Let $G=(V,E)$ be a countable, locally finite graph, let $\beta\in [0,\infty]$, and let $\mathbb{Q}_\beta$ be an augmented $\beta$-arboreal gas Gibbs measure on $G$. If $X \in \mathcal{T}$ is a tail event with $\mathbb{Q}_\beta(X)>0$, then the conditional measure $\mathbb{Q}_\beta(\,\cdot\,|X)$ is an augmented $\beta$-arboreal gas Gibbs measure on $G$.
\end{lemma}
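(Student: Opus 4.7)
The plan is to directly verify the augmented Gibbs property for the conditioned measure, using only the fact that tail events belong to every sub-sigma-algebra $\mathcal{G}_H$. Write $\mathbb{Q}'_\beta := \mathbb{Q}_\beta(\,\cdot\mid X)$, which is well-defined since $\mathbb{Q}_\beta(X) > 0$.

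The crucial observation is that because $X \in \mathcal{T} = \bigcap_H \mathcal{G}_H$, the event $X$ is $\mathcal{G}_H$-measurable for every finite subgraph $H$ of $G$. Fix such a finite subgraph $H$, let $E$ be a measurable subset of $\mathcal{A}(G)$ depending only on $A\cap H$, and let $Y$ be any $\mathcal{G}_H$-measurable event. I would compute
\[
\mathbb{Q}'_\beta(E \cap Y) = \frac{\mathbb{Q}_\beta(E \cap Y \cap X)}{\mathbb{Q}_\beta(X)},
\]
and since $Y \cap X$ is $\mathcal{G}_H$-measurable, the augmented Gibbs property for the original measure $\mathbb{Q}_\beta$ gives
\[
\mathbb{Q}_\beta(E \cap Y \cap X) = \mathbb{E}_{\mathbb{Q}_\beta}\!\left[ \mathbbm{1}_{Y \cap X}\, \mathbb{P}_{H,\beta}^{\Phi(H)}(E) \right].
\]
Dividing by $\mathbb{Q}_\beta(X)$ rewrites this as $\mathbb{E}_{\mathbb{Q}'_\beta}[\mathbbm{1}_Y\, \mathbb{P}_{H,\beta}^{\Phi(H)}(E)]$. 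Since this holds for every $\mathcal{G}_H$-measurable $Y$ and every $E$ depending only on $A \cap H$, the conditional law of $A \cap H$ given $\mathcal{G}_H$ under $\mathbb{Q}'_\beta$ is almost surely equal to $\mathbb{P}_{H,\beta}^{\Phi(H)}$. As $H$ was arbitrary, $\mathbb{Q}'_\beta$ satisfies the augmented Gibbs property and is therefore an augmented $\beta$-arboreal gas Gibbs measure.

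There is essentially no obstacle here; the lemma is a clean consequence of the fact that the Gibbs specification $\mathbb{P}_{H,\beta}^{\Phi(H)}$ depends only on $\mathcal{G}_H$-measurable data, so intersecting with any $\mathcal{G}_H$-measurable event (in particular, any tail event) and renormalizing leaves the specification intact. The only point that requires a moment of care is checking that one is always working with events $E$ in the Borel sigma-algebra of $\mathcal{A}(G)$ depending on $A \cap H$ (rather than the subgraph alone) — but this is automatic since the conditional specification $\mathbb{P}_{H,\beta}^{\Phi(H)}$ is defined only on the edge content inside $H$ while everything outside is encoded in $\mathcal{G}_H$.
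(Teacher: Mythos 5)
Your proof is correct and takes essentially the same route as the paper: both rest on the observation that $X\in\mathcal{T}\subseteq\mathcal{G}_H$ for every finite $H$, which immediately gives that conditioning on $X$ does not change conditional laws given $\mathcal{G}_H$, and hence that the augmented Gibbs specification is preserved. The paper simply states the identity $\mathbb{Q}_X(A\cap H=F\mid\mathcal{G}_H)=\mathbb{Q}_\beta(A\cap H=F\mid\mathcal{G}_H)$ a.s.\ as a standard fact, whereas you unroll the definition of conditional expectation to verify it explicitly; the content is identical.
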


\begin{proof}[Proof of Lemma~\ref{lem:condition_on_tail}]
Let $\mathbb{Q}_X:=\mathbb{Q}_\beta(\,\cdot\,|X)$. Since $X$ is $\mathcal{G}_H$ measurable for each finite subgraph $H$ of $\Z^d$, we have for each such subgraph and each subgraph $F$ of $H$ that
   \begin{align*}
   \hspace{1cm}\mathbb{Q}_X(A \cap H = F \mid \mathcal{G}_H) &= \mathbb{Q}_\beta(A \cap H = F \mid \mathcal{G}_H)  &&\hspace{-1cm}\text{a.s.}
   \intertext{and hence by the augmented Gibbs property of $\mathbb{Q}_\beta$ that}
   \hspace{1cm}\mathbb{Q}_X(A \cap H = F \mid \mathcal{G}_H) &= \mathbb{P}_{H,\beta}^{\Phi(H)}(A \cap H = F)  &&\hspace{-1cm}\text{a.s.}
   \end{align*}
   for every finite subgraph $H$ of $\Z^d$ and every subgraph $F$ of $H$, which is precisely the augmented Gibbs property for $\mathbb{Q}_X$.
\end{proof}

\begin{corollary}
\label{cor:tail_trivial}
Let $G=(V,E)$ be a countable, locally finite graph and let $\beta\in [0,\infty]$. Every extremal element of the convex set of augmented $\beta$-arboreal gas Gibbs measures on $G$ is tail-trivial in the sense that it gives every tail event probability $0$ or $1$.
\end{corollary}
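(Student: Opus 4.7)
The plan is to derive the corollary immediately from Lemma~\ref{lem:condition_on_tail} by the standard contrapositive argument: a non-trivial tail event furnishes an explicit convex decomposition of the measure into two distinct Gibbs measures, contradicting extremality.

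More precisely, I would argue by contradiction. Suppose $\mathbb{Q}_\beta$ is an extremal element of the convex set of augmented $\beta$-arboreal gas Gibbs measures on $G$, but fails tail triviality, so there exists a tail event $X \in \mathcal{T}$ with $0 < \mathbb{Q}_\beta(X) < 1$. Let $p := \mathbb{Q}_\beta(X)$ and define the conditional measures $\mathbb{Q}_X := \mathbb{Q}_\beta(\,\cdot\,|X)$ and $\mathbb{Q}_{X^c} := \mathbb{Q}_\beta(\,\cdot\,|X^c)$. By Lemma~\ref{lem:condition_on_tail} applied to $X$ and to its complement $X^c$ (which is also a tail event with positive probability), both $\mathbb{Q}_X$ and $\mathbb{Q}_{X^c}$ are augmented $\beta$-arboreal gas Gibbs measures on $G$.

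Now observe that $\mathbb{Q}_X(X) = 1$ while $\mathbb{Q}_{X^c}(X) = 0$, so $\mathbb{Q}_X \neq \mathbb{Q}_{X^c}$, and by elementary conditioning we have the convex decomposition
\[
\mathbb{Q}_\beta = p\, \mathbb{Q}_X + (1-p)\, \mathbb{Q}_{X^c}
\]
with $p \in (0,1)$. This exhibits $\mathbb{Q}_\beta$ as a non-trivial convex combination of two distinct augmented Gibbs measures, contradicting the assumption that $\mathbb{Q}_\beta$ is extremal. Hence no such tail event exists and $\mathbb{Q}_\beta$ is tail trivial.

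There is essentially no obstacle: Lemma~\ref{lem:condition_on_tail} has done all the real work, namely verifying that conditioning a Gibbs measure on a tail event preserves the augmented Gibbs property. The only thing to check carefully is that $X^c$ is indeed a tail event (it is, since $\mathcal{T}$ is a sigma-algebra and hence closed under complements), so that the lemma applies symmetrically to both halves of the decomposition.
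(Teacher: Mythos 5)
Your proof is correct and follows essentially the same route as the paper: both invoke Lemma~\ref{lem:condition_on_tail} to show that conditioning on a non-trivial tail event $X$ (and on $X^c$) yields two distinct augmented Gibbs measures, and then use the convex decomposition $\mathbb{Q}_\beta = \mathbb{Q}_\beta(X)\,\mathbb{Q}_\beta(\cdot\mid X) + \mathbb{Q}_\beta(X^c)\,\mathbb{Q}_\beta(\cdot\mid X^c)$ to contradict extremality. The only difference is that you are slightly more explicit about checking that $X^c$ is a tail event, which the paper leaves implicit.
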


\begin{proof}[Proof of Corollary~\ref{cor:tail_trivial}]
If $\mathbb{Q}_\beta$ is a $\beta$-arboreal gas Gibbs measure and $X\in\mathcal{T}$ is such that $\mathbb{Q}(X)\in(0,1)$ then, by Lemma~\ref{lem:condition_on_tail}, we can write $\mathbb{Q}_\beta$ as a convex combination of $\beta$-arboreal gas Gibbs measures
   $\mathbb{Q}(\cdot) = \mathbb{Q}(\cdot|X)\mathbb{Q}(X)+ \mathbb{Q}(\cdot|X^c)\mathbb{Q}(X^c)$.
   Clearly $\mathbb{Q}(\cdot|X)$ and $\mathbb{Q}(\cdot|X^c)$ are non-identical as they each assign a different probability to $X$, so that $\mathbb{Q}_\beta$ is not extremal.
\end{proof}

Let $\mathcal{M}_\beta=\mathcal{M}_\beta(G)$ denote the set of all augmented $\beta$-arboreal gas Gibbs measures on $G$.
 Since $\mathcal{M}_\beta$ is a compact convex subspace of the space of all signed measures on $\mathcal{A}(\Z^d)$, which is a locally-convex topological vector space with respect to the weak (a.k.a.\ weak*) topology, we may apply Choquet's theorem \cite{simon2011convexity} to get that for each $\mathbb{Q}_\beta \in \mathcal{M}_\beta$ there exists a measure $\nu$ on the set of extremal points $\mathrm{ext}(\mathcal{M}_\beta)$ such that
   \[
   \mathbb{Q}_\beta(\cdot) = \int_{\mathrm{ext}(\mathcal{M}_\beta)} \mathbb{Q}^\prime_\beta(\cdot)\  d\nu (\mathbb{Q}^\prime_\beta).
   \]
 Probabilistically, this means that every augmented $\beta$-arboreal gas Gibbs measure can be sampled by first sampling a random \emph{tail trivial} augmented $\beta$-arboreal gas Gibbs measure of appropriate distribution, then sampling from this random tail-trivial measure. Unfortunately this result has limited applicability to our setting since we are interested primarily in the translation-invariant case, and it is not guaranteed that a translation-invariant augmented Gibbs measure decomposes as a mixture of \emph{translation-invariant} tail-trivial augmented Gibbs measures.

 \begin{remark}\label{remark:TT}
One can use the Krein-Milman theorem \cite{simon2011convexity} to prove that every extremal $\beta$-arboreal gas Gibbs measure can be expressed as a weak limit over finite-volume Gibbs measures with \emph{non-random} boundary conditions. We omit the details of these arguments since we are interested primarily in the translation-invariant setting.
 \end{remark}


\textbf{Translation invariance and ergodicity.}
 We now fix a dimension $d\geq 2$ and, as usual, abuse notation by writing $\Z^d$ both for the set of $d$-tuples of integers and the hypercubic lattice considered as a graph, writing $E_d$ for the associated set of nearest-neighbour edges in $\Z^d$. For each $x\in\Z^d$, we define the translation operator $\tau_x$ on subgraphs of $\Z^d$ as \[\tau_x\big((V,E)\big)=(\{v+x:v\in V\},\{\{v_1+x,v_2+x\}:\{v_1,v_2\}\in E\}).\]
 For each $x\in\Z^d$, $\tau_x$ also acts on augmented subgraphs via
 $\tau_{-x}(S,\Phi)=(\tau_{-x}S,\tau_{-x}\Phi)$ where $[\tau_{-x}\Phi](H)(u,v)=\Phi(H+x)(u+x,v+x)$. \textbf{Translation-invariant events} in, and \textbf{translation-invariant measures} on $\mathcal{S}(G)$ and $\mathcal{A}(G)$ are then defined as expected with respect to these operations. We write $\mathcal{I}$ for the sigma-algebra of translation-invariant events in $\mathcal{A}$ and write $\mathcal{I}_S$ for the sigma-algebra of translation-invariant events in $\mathcal{A}$ depending only on the subgraph coordinate (that is, for which any two augmentations of the same subgraph either both belong to the event or both belong to its complement).

The following lemma implies that if we wish to study translation-invariant Gibbs measures, it suffices to consider translation-invariant \emph{augmented} Gibbs measures. 

 \begin{lemma}\label{lemma:m_to_aug}
   Fix $d\geq 1$, $\beta\in(0,\infty]$, and let $\pr_\beta$ be a $\beta$-arboreal gas infinite-volume Gibbs measure on $\Z^d$. Then $\pr_\beta$ is translation-invariant if and only if it admits a translation-invariant Gibbs augmentation.
 \end{lemma}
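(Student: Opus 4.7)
The forward implication is immediate: the projection $\pi:\mathcal{A}(\Z^d)\to\mathcal{S}(\Z^d)$ sending $(S,\Phi)\mapsto S$ is translation-equivariant, so if $\mathbb{Q}_\beta$ is a translation-invariant Gibbs augmentation of $\mathbb{P}_\beta$ then $\mathbb{P}_\beta=\pi_*\mathbb{Q}_\beta$ is automatically translation-invariant. All the substance is in the converse.

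For the converse I would use a Cesaro / F\o{}lner averaging argument. Since $\mathbb{P}_\beta$ is by assumption a Gibbs measure, Definition~\ref{definition:IVGM} provides some (possibly non-translation-invariant) Gibbs augmentation $\mathbb{Q}_\beta$ of $\mathbb{P}_\beta$. For each $n\geq 1$ set $B_n=[-n,n]^d\cap\Z^d$ and define
\[
\mathbb{Q}_\beta^{(n)} := \frac{1}{|B_n|}\sum_{x\in B_n}(\tau_x)_*\mathbb{Q}_\beta.
\]
Because the finite-volume specifications $\mathbb{P}^{\phi}_{H,\beta}$ are covariant under lattice translations of $(H,\phi)$, each pushforward $(\tau_x)_*\mathbb{Q}_\beta$ is a Gibbs augmentation of $(\tau_x)_*\mathbb{P}_\beta=\mathbb{P}_\beta$; convexity of the set of Gibbs augmentations of $\mathbb{P}_\beta$ (immediate from the fact that the augmented Gibbs property is a collection of linear identities in $\mathbb{Q}$) then implies that every $\mathbb{Q}_\beta^{(n)}$ is such an augmentation. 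Compactness of $\mathcal{A}(\Z^d)$ furnishes a weak subsequential limit $\mathbb{Q}_\beta^\infty$, which is translation-invariant by the standard F\o{}lner estimate comparing $B_n$ with its translates.

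The step that will require the most care, and is really the crux of the argument, is verifying that the augmented Gibbs property survives this weak limit, i.e.\ that the set of Gibbs augmentations of $\mathbb{P}_\beta$ is weakly closed. I would rephrase the augmented Gibbs property as the family of linear identities
\[
\mathbb{E}_{\mathbb{Q}}\bigl[f\cdot\mathbbm{1}(A\cap H=F)\bigr]=\mathbb{E}_{\mathbb{Q}}\bigl[f\cdot\mathbb{P}^{\Phi(H)}_{H,\beta}(A\cap H=F)\bigr]
\]
indexed by a finite subgraph $H\in\mathcal{S}^f(\Z^d)$, a subgraph $F$ of $H$, and a function $f$ ranging over a countable collection of cylinder indicators forming a $\pi$-system that generates $\mathcal{G}_H$. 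Under the natural product topology on $\mathcal{A}(\Z^d)$ — whose factors are all discrete, since each edge takes values in $\{0,1\}$ and each $\Phi(K)$ lies in the finite set $\mathcal{P}(\partial K)$ — these cylinder indicators are continuous, as is the map $\Phi(H)\mapsto\mathbb{P}^{\Phi(H)}_{H,\beta}(A\cap H=F)$ (trivially, since $\mathcal{P}(\partial H)$ is finite). Both sides of the identity above are therefore integrals of bounded continuous functions, so each identity is preserved under weak convergence; a standard monotone-class argument then promotes the equality from this generating $\pi$-system to all bounded $\mathcal{G}_H$-measurable integrands, yielding the full augmented Gibbs property for $\mathbb{Q}_\beta^\infty$ and completing the proof.
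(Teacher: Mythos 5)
Your proof is correct and follows essentially the same Cesàro-averaging argument the paper uses: the paper averages over a uniformly random translation in $\Lambda(n)$ and takes a subsequential weak limit, which is exactly your $\mathbb{Q}_\beta^{(n)}$ construction. The extra care you take in verifying that the Gibbs property passes to the weak limit — rewriting it as integral identities against continuous cylinder functions, then upgrading via monotone class — fills in a step the paper only asserts, and is essentially the content of the paper's parenthetical remark that the set of Gibbs augmentations is weakly compact and convex.
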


 \begin{proof} The `if' direction is trivial; we focus on the `only if' direction, which follows from the amenability of $\Z^d$.
 Let $\mathbb{P}$ be a translation-invariant infinite volume Gibbs measure and let $(A,\Phi)$ have the law of an augmentation of $\pr$. For each $n\geq 1$, let $V_n$ be a uniformly chosen vector in $\Lambda(n)$, and consider the sequence of random variables $(\tau_{V_n}A,\tau_{V_n}\Phi)_{n\geq 1}$.
 Taking a subsequential weak limit yields a translation-invariant random variable $(A^\prime,\Phi^\prime)$ whose law is a Gibbs augmentation of $\pr$. (Alternatively, one can check that for each $\beta$-arboreal gas Gibbs measure $\mathbb{P}_\beta$ on $\Z^d$, the set of Gibbs augmentations of $\mathbb{P}_\beta$ is a weakly compact convex subset of the space of probability measures on augmented subgraphs of $\Z^d$. When $\mathbb{P}_\beta$ is translation-invariant this set is fixed by the action of $\mathbb{Z}^d$, and therefore must contain a fixed point since $\mathbb{Z}^d$ is amenable.)
 \end{proof}

We write $\mathcal{M}^T_\beta=\mathcal{M}^T_\beta(\Z^d)$ for the set of translation-invariant $\beta$-arboreal gas Gibbs measures on $\Z^d$, which is a weakly closed, convex set of the space of all signed measures on $\mathcal{A}(\Z^d)$. Applying Choquet's theorem as above yields that every element of $\mathcal{M}_\beta^T$ can be written as a mixture of its extremal points: For each $\mathbb{Q}_\beta \in \mathcal{M}^T_\beta$ there exists a measure $\nu$ on the set of extremal points $\mathrm{ext}(\mathcal{M}^T_\beta)$ such that
   \[
   \mathbb{Q}_\beta(\cdot) = \int_{\mathrm{ext}(\mathcal{M}^T_\beta)} \mathbb{Q}^\prime_\beta(\cdot)\  d\nu (\mathbb{Q}^\prime_\beta).
   \]
In the standard quasilocal DLR--Gibbs theory, one would then argue that every element of $\mathrm{ext}(\mathcal{M}^T_\beta)$ is ergodic, meaning that it assigns probability $0$ or $1$ to every invariant event in $\mathcal{A}$. Unfortunately, the standard proof of this fact breaks down in our setting. More specifically, it is not clear whether the translation-invariant sigma-algebra is always contained in the completion of the tail sigma-algebra. Nevertheless, we do still have that extremal translation-invariant Gibbs measures are trivial on the intersection of the tail and invariant sigma algebras:

\begin{lemma}\label{lem:condition_on_invariant}
Fix $d\geq 1$ and $\beta\in[0,\infty]$. If $\mathbb{Q}_\beta\in \mathcal{M}^T_\beta$ is a translation-invariant augmented $\beta$-arboreal gas Gibbs measure and $X \subseteq \mathcal{A}$ is an event belonging to the $\mathbb{Q}_\beta$-completions of both $\mathcal{T}$ and $\mathcal{I}$ with $\mathbb{Q}_\beta(X)>0$ then $\mathbb{Q}_\beta(\cdot|X)$ is also a translation-invariant $\beta$-arboreal gas Gibbs measure.
\end{lemma}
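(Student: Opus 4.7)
The plan is to combine Lemma~\ref{lem:condition_on_tail} with a straightforward invariance argument, using the fact that $X$ lies in both completions to access each property separately. Concretely, I would fix representatives $X_{\mathcal{T}} \in \mathcal{T}$ and $X_{\mathcal{I}} \in \mathcal{I}$ with $\mathbb{Q}_\beta(X \triangle X_{\mathcal{T}}) = \mathbb{Q}_\beta(X \triangle X_{\mathcal{I}}) = 0$, so that in particular $\mathbb{Q}_\beta(X_{\mathcal{T}}) = \mathbb{Q}_\beta(X_{\mathcal{I}}) = \mathbb{Q}_\beta(X) > 0$ and the three conditional measures $\mathbb{Q}_\beta(\,\cdot\,|X)$, $\mathbb{Q}_\beta(\,\cdot\,|X_{\mathcal{T}})$, and $\mathbb{Q}_\beta(\,\cdot\,|X_{\mathcal{I}})$ all coincide as probability measures on $\mathcal{A}(\Z^d)$.

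For the augmented Gibbs property, I would apply Lemma~\ref{lem:condition_on_tail} directly to the tail event $X_{\mathcal{T}}$, which immediately yields that $\mathbb{Q}_\beta(\,\cdot\,|X_{\mathcal{T}})$ is an augmented $\beta$-arboreal gas Gibbs measure on $\Z^d$. Since this measure equals $\mathbb{Q}_\beta(\,\cdot\,|X)$, the Gibbs property is inherited.

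For translation invariance, I would work instead with the invariant representative $X_{\mathcal{I}}$. For any $x \in \Z^d$ and any Borel $E \subseteq \mathcal{A}(\Z^d)$,
\[
\mathbb{Q}_\beta(\tau_x^{-1} E \mid X_{\mathcal{I}}) = \frac{\mathbb{Q}_\beta(\tau_x^{-1} E \cap X_{\mathcal{I}})}{\mathbb{Q}_\beta(X_{\mathcal{I}})} = \frac{\mathbb{Q}_\beta(\tau_x^{-1}(E \cap X_{\mathcal{I}}))}{\mathbb{Q}_\beta(X_{\mathcal{I}})} = \frac{\mathbb{Q}_\beta(E \cap X_{\mathcal{I}})}{\mathbb{Q}_\beta(X_{\mathcal{I}})} = \mathbb{Q}_\beta(E \mid X_{\mathcal{I}}),
\]
where the second equality uses $\tau_x^{-1} X_{\mathcal{I}} = X_{\mathcal{I}}$ and the third uses translation invariance of $\mathbb{Q}_\beta$. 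Hence $\mathbb{Q}_\beta(\,\cdot\,|X_{\mathcal{I}})$, and therefore $\mathbb{Q}_\beta(\,\cdot\,|X)$, is translation invariant.

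Combining the two bullet points, $\mathbb{Q}_\beta(\,\cdot\,|X)$ lies in $\mathcal{M}^T_\beta$, as required. There is no real obstacle here; the only subtlety worth flagging is the passage through the two different representatives $X_{\mathcal{T}}$ and $X_{\mathcal{I}}$, which is needed precisely because we do not assume (and in general cannot prove in this setting) that $\mathcal{I}$ is contained in the $\mathbb{Q}_\beta$-completion of $\mathcal{T}$ or vice versa; this is exactly the gap with the standard quasilocal DLR theory noted just before the lemma statement.
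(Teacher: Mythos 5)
Your proof is correct and takes essentially the same approach as the paper: pick a tail representative $X'\in\mathcal{T}$ to invoke Lemma~\ref{lem:condition_on_tail} for the augmented Gibbs property, and a separate invariant representative $X''\in\mathcal{I}$ for the translation-invariance computation, noting all three conditional measures coincide. The only difference is that you spell out the translation-invariance calculation explicitly, which the paper leaves to the reader.
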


\begin{proof}
Since $X$ is in the completion of $\mathcal{T}$, there exists an event $X'\in \mathcal{T}$ with $\mathbb{Q}_\beta(X\Delta X')=0$ and hence with $\mathbb{Q}_\beta(\cdot|X)=\mathbb{Q}_\beta(\cdot|X')$, so that Lemma~\ref{lem:condition_on_tail} implies that $\mathbb{Q}_\beta(\cdot|X)$ is an augmented $\beta$-arboreal gas Gibbs measure. Similarly, since $X$ is in the completion of  $\mathcal{I}$, there exists an event $X''\in \mathcal{I}$ such that $\mathbb{Q}_\beta(\cdot|X)=\mathbb{Q}_\beta(\cdot|X'')$, and one may verify from the definitions that $\mathbb{Q}_\beta(\cdot|X'')$ is translation-invariant since both $\mathbb{Q}_\beta$ and $X''$ are.
\end{proof}

\begin{corollary}\label{cor:extremal_translation_invariant}
Fix $d\geq 1$ and $\beta\in[0,\infty]$. If $\mathbb{Q}_\beta\in \operatorname{ext}(\mathcal{M}^T_\beta)$ is an extremal translation-invariant augmented $\beta$-arboreal gas Gibbs measure and $X \subseteq \mathcal{A}$ is an event belonging to the $\mathbb{Q}_\beta$-completions of both $\mathcal{T}$ and $\mathcal{I}$ then $\mathbb{Q}_\beta(X)\in \{0,1\}$.
\end{corollary}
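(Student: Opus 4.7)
The plan is to derive this immediately from Lemma~\ref{lem:condition_on_invariant} by a standard convex-decomposition argument, exactly analogous to the derivation of Corollary~\ref{cor:tail_trivial} from Lemma~\ref{lem:condition_on_tail}.

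Suppose for contradiction that $\mathbb{Q}_\beta \in \operatorname{ext}(\mathcal{M}^T_\beta)$ and that $X$ lies in the $\mathbb{Q}_\beta$-completions of both $\mathcal{T}$ and $\mathcal{I}$ but has $\mathbb{Q}_\beta(X) =: p \in (0,1)$. Then the complement $X^c$ is also in the completions of both $\mathcal{T}$ and $\mathcal{I}$ (the completion of a sigma-algebra is itself a sigma-algebra), so Lemma~\ref{lem:condition_on_invariant} applies to both $X$ and $X^c$ and yields that the two conditional measures $\mathbb{Q}_\beta(\,\cdot\mid X)$ and $\mathbb{Q}_\beta(\,\cdot\mid X^c)$ both belong to $\mathcal{M}^T_\beta$.

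By the definition of conditional probability we have the convex decomposition
\[
\mathbb{Q}_\beta(\,\cdot\,) = p\,\mathbb{Q}_\beta(\,\cdot\mid X) + (1-p)\,\mathbb{Q}_\beta(\,\cdot\mid X^c),
\]
with $p \in (0,1)$. The two measures on the right-hand side are distinct, because $\mathbb{Q}_\beta(X\mid X) = 1$ while $\mathbb{Q}_\beta(X\mid X^c) = 0$ (the event $X$ is measurable since it lies in the completion of $\mathcal{T}$, hence is $\mathbb{Q}_\beta$-measurable). This contradicts the assumption that $\mathbb{Q}_\beta$ is extremal in $\mathcal{M}^T_\beta$, completing the proof.

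There is no real obstacle here: once Lemma~\ref{lem:condition_on_invariant} is in hand, the statement is a one-line consequence. The only technical point to keep track of is that passing from a set in the completion to a set in the sigma-algebra itself (needed to apply Lemma~\ref{lem:condition_on_invariant}) is harmless because both conditional measures agree on events that differ by a null set, and the decomposition above only uses the values of $\mathbb{Q}_\beta$ on the almost-sure representative of $X$.
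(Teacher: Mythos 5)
Your proof is correct and follows exactly the route the paper intends: the corollary is stated without proof precisely because it is the one-line consequence of Lemma~\ref{lem:condition_on_invariant} that you spell out, mirroring how Corollary~\ref{cor:tail_trivial} is derived from Lemma~\ref{lem:condition_on_tail}. The only superfluous remark is your caveat about passing from the completion to the sigma-algebra itself, since Lemma~\ref{lem:condition_on_invariant} is already stated for events in the completion and so applies directly to both $X$ and $X^c$.
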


This corollary together with the next lemma implies that the sigma-algebra $\mathcal{I}_S$ of translation-invariant events that are insensitive to the choice of augmentation is always trivial for any extremal translation-invariant augmented Gibbs measure. This is a (slightly unsatisfactory) analogue of the statement in the standard DLR--Gibbs theory that extremal translation invariant measures are ergodic.

 \begin{lemma} \label{lemma:translation_tail}
   Fix $d\geq 1$, $\beta\in(0,\infty]$, and let $\mathbb{Q}_\beta$ be a translation-invariant augmented $\beta$-arboreal gas Gibbs measure on $\Z^d$. Then $\mathcal{I}_S$ is contained in the $\mathbb{Q}_\beta$-completion of $\mathcal{T}$. That is, for any translation-invariant $X\in \mathcal{I}_S$, there exists 
   $Y\in \mathcal{T}$ such that $\mathbb{Q}_\beta(X\Delta Y)=0$.
\end{lemma}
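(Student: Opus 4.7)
The plan is to show that any $X\in\mathcal{I}_S$ lies in the $\mathbb{Q}_\beta$-completion of each $\mathcal{G}_H$ and then upgrade this, via reverse martingale convergence, to the $\mathbb{Q}_\beta$-completion of $\mathcal{T}=\bigcap_H\mathcal{G}_H$. The key observation driving the first step is that, since $X$ is translation-invariant and depends only on the subgraph coordinate, any ``local'' approximation of $X$ can be translated arbitrarily far from a given finite region $H$ while remaining an equally good approximation.

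For the first step, fix a finite subgraph $H$ of $\Z^d$ and $\varepsilon>0$. The Borel sigma-algebra on $\mathcal{S}(\Z^d)$ is generated by the cylinder events $\{S:S\cap E[K]=F\}$ for finite subgraphs $K$ and $F\subseteq E[K]$, and $X$ is the preimage of a Borel set in $\mathcal{S}(\Z^d)$ under the continuous projection $\pi$. A standard monotone class approximation therefore produces a finite subgraph $K$ of $\Z^d$ and an event $X_K$ depending only on $S\cap E[K]$ with $\mathbb{Q}_\beta(X\Delta X_K)<\varepsilon$. Choose $x\in\Z^d$ with $(E[K]+x)\cap E[H]=\emptyset$: this is possible for any $H$ since $\Z^d$ is infinite. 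The translated event $\tau_x X_K$ is determined by $S\cap (E[K]+x)$, which is a subset of $A\setminus E[H]$, so $\tau_x X_K\in\mathcal{G}_H$. Translation-invariance of $\mathbb{Q}_\beta$, together with the equality $\tau_x X=X$, yields
\[
\mathbb{Q}_\beta(X\Delta \tau_x X_K)=\mathbb{Q}_\beta(\tau_{-x}X\Delta X_K)=\mathbb{Q}_\beta(X\Delta X_K)<\varepsilon.
\]
Letting $\varepsilon\to 0$ and using that the set of $\mathcal{G}_H$-measurable indicator functions is closed in $L^1(\mathbb{Q}_\beta)$ produces a $Y_H\in\mathcal{G}_H$ with $\mathbb{Q}_\beta(X\Delta Y_H)=0$; equivalently, $\mathbb{E}_{\mathbb{Q}_\beta}[\mathbf{1}_X\mid\mathcal{G}_H]=\mathbf{1}_X$ almost surely.

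For the second step, let $(H_n)_{n\geq 1}$ be an exhaustion of $\Z^d$. Since $H\subseteq H'$ implies $A\setminus E[H']\subseteq A\setminus E[H]$ and $\Phi_{H'}$ is a subcollection of $\Phi_H$, the sigma-algebras $\mathcal{G}_{H_n}$ are decreasing in $n$, and their intersection coincides with $\mathcal{T}$ (because every finite subgraph of $\Z^d$ is contained in some $H_n$). The reverse martingale convergence theorem gives $\mathbb{E}_{\mathbb{Q}_\beta}[\mathbf{1}_X\mid\mathcal{G}_{H_n}]\to \mathbb{E}_{\mathbb{Q}_\beta}[\mathbf{1}_X\mid\mathcal{T}]$ almost surely as $n\to\infty$, while the first step identifies the left-hand side with $\mathbf{1}_X$ for every $n$. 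Hence $\mathbf{1}_X=\mathbb{E}_{\mathbb{Q}_\beta}[\mathbf{1}_X\mid\mathcal{T}]$ almost surely, which is exactly the statement that $X$ agrees with a $\mathcal{T}$-measurable event modulo a $\mathbb{Q}_\beta$-null set.

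There is no single hard step in the argument; it is essentially the standard Kolmogorov-style heuristic that shift-invariant events depend only on what happens at infinity, together with the amenability of $\Z^d$ (used implicitly when we pick translations $x$ moving $K$ far from $H$). The main point requiring care is the verification that $\tau_x X_K\in\mathcal{G}_H$, which is where the restriction $X\in\mathcal{I}_S$ (rather than merely $X\in\mathcal{I}$) is critical: if $X$ could depend on the augmentation $\Phi$, then the translated approximation $\tau_x X_K$ would in general require information about $\Phi$ at scales incomparable with $H$, and its $\mathcal{G}_H$-measurability would fail.
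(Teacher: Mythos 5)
Your argument is correct and uses the same core mechanism as the paper: approximate $X$ by a local event via the monotone class theorem, translate it far away using translation-invariance of $\mathbb{Q}_\beta$ and $X\in\mathcal{I}_S$, and pass to the tail. The only difference is cosmetic packaging of the last step: the paper diagonalizes (translating the $n$-th approximant off $\Lambda_n$) and takes a $\limsup$ with a Borel--Cantelli bound, whereas you first establish $X$ lies in the $\mathbb{Q}_\beta$-completion of each $\mathcal{G}_H$ and then invoke reverse martingale convergence along the decreasing filtration $\mathcal{G}_{H_n}\downarrow\mathcal{T}$; both routes are valid and equally short.
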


\begin{proof}[Proof of Lemma~\ref{lemma:translation_tail}]
   Let $(A,\Phi)$ be distributed as $\mathbb{Q}_\beta$ and for each $n\geq 1$ let $\Lambda_n$ be the box $[-n,n]^d$ considered as a subgraph of $\Z^d$. By definition of the product Borel sigma-algebra, $\sigma(A)$ is generated by
   the union $\bigcup_H \sigma(A \cap H)$, where this union is taken over all finite subgraphs $H$ of $\Z^d$.
   Since $\mathcal{I}_S = \mathcal{I}\cap \sigma(A) \subseteq \sigma(A)$, 
    it follows from the Dynkin $\pi-\lambda$ theorem that for every event $X\in\mathcal{I}_S$ and every $\eps>0$ there exists a finite subgraph $H$ of $\Z^d$ and an event $X' \in \sigma(A \cap H)$ such that $\mathbb{Q}_\beta(X'\Delta X)\leq \eps$. Fix an event $X\in \mathcal{I}_S$ and for each $n\geq 1$ let $H_n \in \mathcal{S}^f(\Z^d)$ and $X_n\in \sigma(A\cap H_n)$ be such that $\mathbb{Q}_\beta(X\Delta X_n)\leq 2^{-n}$.
     For each $n\geq 1$, let $X_n^\prime=\tau_{x_n}(X_n)$, where $x_n \in \mathbb{Z}^d$ is such that $\tau_{x_n}(H_n)$ is disjoint from $\Lambda_n$.
   We observe that $\mathbb{Q}(X\Delta X_n^\prime)=\mathbb{Q}(X\Delta X_n)\leq 2^{-n}$ by translation-invariance of $X$ and $\mathbb{Q}_\beta$, and moreover that $X_n'\in\sigma(A \setminus \Lambda_n)\subseteq\mathcal{G}_{\Lambda_n}$
   for every $n\geq 1$. Letting $X''=\limsup X'_n:=\cap_{n\geq 1}\cup_{m\geq n} X_m^\prime$ be the event that infinitely many of the events $X'_n$ hold, we have that $X''\in\mathcal{T}$ and that
   \[
   \mathbb{Q}(X\cap X'')\leq\mathbb{Q}\Big(X\Delta X_n^\prime \text{ holds for infinitely many $n$} \Big)\leq \lim_{n\rightarrow\infty} \sum_{m\geq n} 2^{-m}=0,
   \]
   which completes the proof.
\end{proof}

\begin{remark}
This proof does \textit{not} straightforwardly extend to show that $\mathcal{I}$ is contained in the completion of $\mathcal{T}$ due to the  long-range dependencies encoded in the boundary map. It would be possible to run the proof if one knew that $\sigma(A)$ and $\mathcal{T}$ together generate the entire sigma algebra on $\mathcal{A}(G)$, but this seems to be a surprisingly subtle matter.
\end{remark}

We deduce the following immediate corollary.

\begin{corollary}
\label{cor:A_ergodic}
Fix $d\geq 1$ and $\beta\in[0,\infty]$. If $\mathbb{Q}_\beta\in \operatorname{ext}(\mathcal{M}^T_\beta)$ is an extremal translation-invariant augmented $\beta$-arboreal gas Gibbs measure then $\pi_* \mathbb{Q}_\beta$ is an ergodic translation-invariant $\beta$-arboreal gas Gibbs measure.
\end{corollary}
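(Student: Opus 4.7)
The plan is to reduce ergodicity of $\pi_*\mathbb{Q}_\beta$ to the two already-established results: Lemma~\ref{lemma:translation_tail} (that $\mathcal{I}_S$ lies in the $\mathbb{Q}_\beta$-completion of $\mathcal{T}$) and Corollary~\ref{cor:extremal_translation_invariant} (that extremal translation-invariant Gibbs augmentations are trivial on the intersection of the completions of $\mathcal{T}$ and $\mathcal{I}$).

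First I would handle the two easy claims. Translation-invariance of $\pi_*\mathbb{Q}_\beta$ is immediate from the fact that $\pi \circ \tau_x = \tau_x \circ \pi$ for every $x \in \mathbb{Z}^d$, combined with translation-invariance of $\mathbb{Q}_\beta$. That $\pi_*\mathbb{Q}_\beta$ is a $\beta$-arboreal gas Gibbs measure is just Definition~\ref{definition:IVGM} applied to $\mathbb{Q}_\beta$, which by hypothesis is an augmented $\beta$-arboreal gas Gibbs measure.

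The content is therefore to verify ergodicity. Let $Y \subseteq \mathcal{S}(\Z^d)$ be any translation-invariant Borel event on the space of subgraphs and set $X := \pi^{-1}(Y) \subseteq \mathcal{A}(\Z^d)$. Then $X$ depends only on the subgraph coordinate, and since $Y$ is translation-invariant and $\pi$ intertwines the two $\mathbb{Z}^d$-actions, $X$ is translation-invariant as an event on $\mathcal{A}(\Z^d)$. Thus $X \in \mathcal{I}_S \subseteq \mathcal{I}$. By Lemma~\ref{lemma:translation_tail}, $X$ also lies in the $\mathbb{Q}_\beta$-completion of the tail $\sigma$-algebra $\mathcal{T}$. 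Since $\mathbb{Q}_\beta \in \operatorname{ext}(\mathcal{M}^T_\beta)$, Corollary~\ref{cor:extremal_translation_invariant} applies and gives $\mathbb{Q}_\beta(X) \in \{0,1\}$, so that $\pi_*\mathbb{Q}_\beta(Y) = \mathbb{Q}_\beta(X) \in \{0,1\}$, as required.

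There is no real obstacle here: this corollary is set up to be the immediate payoff of the preceding two results, and the only thing one has to verify is the (straightforward) fact that pulling back a translation-invariant subgraph event along $\pi$ lands in $\mathcal{I}_S$.
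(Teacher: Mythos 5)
Your proposal is correct and follows exactly the intended route: the paper states this as an ``immediate corollary'' of Lemma~\ref{lemma:translation_tail} and Corollary~\ref{cor:extremal_translation_invariant}, and your filling-in — pull back a translation-invariant subgraph event along $\pi$ to land in $\mathcal{I}_S$, invoke Lemma~\ref{lemma:translation_tail} to place it in the completion of $\mathcal{T}$, then apply Corollary~\ref{cor:extremal_translation_invariant} — is precisely the argument the authors have in mind.
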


\begin{remark}
 We will later prove in Corollary~\ref{cor:augmentation_always_wired} that if $(A,\Phi)$ is distributed as an a translation-invariant augmented $\beta$-arboreal gas Gibbs measure on $\Z^d$ with $\beta<\infty$, the boundary map $\Phi$ is almost surely equal to the wired boundary map associated to $A$, and hence coincides a.s.\ with a measurable function of $A$. Moreover, the boundary map also coincides a.s.\ with a measurable function of $A$ in the case $\beta=\infty$ as discussed in Remark~\ref{remark:UST2}.
  As such, it follows \emph{a posteriori} (see Corollary~\ref{cor:full_ergodicity}) that the completions of the sigma-algebras $\mathcal{I}$ and $\mathcal{I}_S$ are equal, and hence that every measure in $\mathrm{ext}(\mathcal{M}^T_\beta)$ is ergodic. Let us stress however that this proof uses specific properties of the arboreal gas (and, implicitly, the amenability of $\Z^d$), in contrast to the other proofs of this section which apply without change to a very large class of models with connection-based interactions. Moreover, the logical structure of the paper means that we cannot assume true ergodicity in the proof of Theorem~\ref{thm:resampling} since this ergodicity is established only at the very end of Section~\ref{section:resampling}.
  \end{remark}

\begin{remark}
It follows by standard arguments that the extremal elements of the set of \emph{all} translation-invariant measures on $\mathcal{A}(\Z^d)$ are ergodic, and hence by Choquet theory that every translation-invariant measure on $\mathcal{A}(\Z^d)$ can be written as a mixture of ergodic translation-invariant measures. This statement is of limited use to us since we prefer to stay within the class of augmented arboreal gas Gibbs measures.
\end{remark}

\section{Proof of Theorems~\ref{thm:resampling} and \ref{thm:ends}}
\label{section:resampling}

In this section we use the framework developed in the previous section to prove Theorems \ref{thm:resampling} and \ref{thm:ends}.
We begin with Theorem~\ref{thm:resampling}, whose proof is split into two propositions. The first, proven in Section~\ref{subsec:resampling_general}, establishes a `local' version of the same resampling theorem that does not require the symmetry of $\Z^d$, while the second, proven in Section~\ref{subsec:infinite_locus}, establishes the basic qualitative features of the augmented connectivity relation for augmented arboreal gas Gibbs measures on $\Z^d$. As a part of the proof of Section~\ref{subsec:infinite_locus} we prove Theorem \ref{thm:ends}, which states that all the infinite trees in the arboreal gas are one-ended almost surely.

 \subsection{Resampling without symmetry}
\label{subsec:resampling_general}

In this section we prove the following proposition, which establishes a very general version of the resampling property that does not require any symmetry assumptions on the graph or the measure. This proposition is inspired in part by the UST resampling theorem of Lyons, Peres, and Sun \cite{lyons2020induced}.

\begin{proposition} \label{prop:local_resampling_general}
Let $G=(V,E)$ be a connected, locally finite graph, let $o$ be a vertex of $G$, and let $(A,\Phi)$ be distributed as an augmented $\beta$-arboreal gas Gibbs measure on $G$. Let $I_o=\{x \in V:o\xleftrightarrow{(A,\Phi)} x\}$ and let $\mathrm{Tr}(I_o)$ be the subgraph of $G$ induced by $I_o$.
 Then the conditional distribution of the restriction of $A$ to $I_o$ given $I_o$ and the restriction of $A$ to the complement of $I_o$ is almost surely equal to some Gibbs measure for the uniform maximal spanning forest on $\mathrm{Tr}(I_o)$, where the choice of Gibbs measure may be random.
\end{proposition}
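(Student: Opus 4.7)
The proof rests on a simple finite-volume identity for the arboreal gas. Under $\mathbb{P}^{\phi}_{H,\beta}$, if we condition on the partition $\mathcal{C}$ of $V[H]$ induced by the augmented clusters of $(F,\phi)$, then the restriction of $F$ to each cluster $C\in \mathcal{C}$ is, independently across clusters, distributed as a uniform spanning tree of the quotient graph $\mathrm{Tr}(C)/\phi|_{C\cap \partial H}$. This follows from the computation that $|F|=\sum_{C\in\mathcal{C}}(|V(\mathrm{Tr}(C)/\phi|_{C\cap\partial H})|-1)$ depends only on $\mathcal{C}$, rendering the weighting $\beta^{|F|}$ uniform conditional on $\mathcal{C}$, together with the observation that $F$-configurations compatible with $\mathcal{C}$ decompose into an independent spanning tree per cluster with no edges between clusters.

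Combining this with the augmented Gibbs property of $(A,\Phi)$ yields the key local statement. Fix a finite induced subgraph $H''$ of $\mathrm{Tr}(I_o)$ and, for ease of exposition, assume $o\notin V[H'']$. By the augmented Gibbs property, conditional on $\mathcal{G}_{H''}$ we have $A\cap E[H'']\sim \mathbb{P}^{\Phi(H'')}_{H'',\beta}$. The set $B_o\subseteq \partial H''$ of boundary classes augmented-connected to $o$ outside $H''$ is $\mathcal{G}_{H''}$-measurable, and given that $V[H'']\subseteq I_o$, the augmented cluster of $B_o$ under $(A\cap E[H''],\Phi(H''))$ must be all of $V[H'']$; in particular the cluster partition is trivial, $\mathcal{C}=\{V[H'']\}$. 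The identity above then yields that, conditional on $\mathcal{G}_{H''}$ and $V[H'']\subseteq I_o$, the distribution of $A\cap E[H'']$ is precisely $\mathbb{P}^{\Phi(H'')}_{H'',\infty}$.

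To reinterpret this as the UMSF Gibbs property on $\mathrm{Tr}(I_o)$, we define a candidate boundary map $\Phi'$ on $\mathrm{Tr}(I_o)$ by declaring, for each finite $H''\subseteq \mathrm{Tr}(I_o)$ and $u,v\in \partial_{\mathrm{Tr}(I_o)} H''$, that $\Phi'(H'')(u,v)=1$ if and only if $u\xleftrightarrow{(A,\Phi)\setminus E[H'']}v$. The key geometric observation is that any augmented path between $I_o$-vertices in $(A,\Phi)$ passes only through $I_o$ (since its endpoints are, and $I_o$ is an augmented connectivity class). This yields both the consistency of $\Phi'$ on $\mathrm{Tr}(I_o)$ and the fact that $\Phi(H'')|_{\partial_G H''}$ agrees with $\Phi'(H'')$ on $\partial_{\mathrm{Tr}(I_o)} H''$ and consists of singletons elsewhere (a vertex $v\in \partial_G H''\setminus \partial_{\mathrm{Tr}(I_o)} H''$ has no $I_o$-neighbor outside $V[H'']$, hence is isolated in the relevant connectivity). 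Since singletons do not alter the quotient graph, $\mathbb{P}^{\Phi(H'')}_{H'',\infty}=\mathbb{P}^{\Phi'(H'')}_{H'',\infty}$, and the preceding paragraph's conclusion becomes exactly the augmented UMSF Gibbs property for $(A\cap E[\mathrm{Tr}(I_o)],\Phi')$ on $\mathrm{Tr}(I_o)$.

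The proposition follows upon coarsening the conditioning to $\sigma(I_o,A\setminus E[\mathrm{Tr}(I_o)])$. The conditional law of $(A\cap E[\mathrm{Tr}(I_o)],\Phi')$ under this coarser sigma-algebra is a mixture of augmented UMSF Gibbs measures (mixed over the unconditioned parts of $\Phi$), and such mixtures remain augmented UMSF Gibbs measures since the boundary condition $\Phi'(H'')$ is measurable with respect to the outer sigma-algebra $\mathcal{G}^{\mathrm{Tr}(I_o)}_{H''}$, so $\mathbb{P}^{\Phi'(H'')}_{H'',\infty}$ is unaffected by averaging out the remaining randomness; projecting to the subgraph coordinate gives the promised random UMSF Gibbs measure. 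The principal technical obstacle lies in the careful matching of boundary conditions between the $G$ and $\mathrm{Tr}(I_o)$ perspectives and in verifying consistency of $\Phi'$; the case $o\in V[H'']$ proceeds identically with the augmented cluster of $o$ itself (rather than that of $B_o$) playing the central role.
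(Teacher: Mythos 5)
Your proof is correct and shares the paper's core strategy, but concludes by a modestly different route. Both proofs start from the same finite-volume resampling observation (the weight $\beta^{|F|}$ depends on $F$ only through the augmented cluster partition, so conditional on that partition the restriction to each cluster is a uniform spanning tree of its trace quotient) and transfer this to infinite volume via the augmented Gibbs property of $\mathbb{Q}_\beta$. Where you differ is in the endgame: the paper takes a finite subgraph $H$ of $G$, shows the conditional law of $A$ on $I_o\cap V[H]$ given $\mathcal{F}_H$ is the UST measure on $\mathrm{Tr}(I_o\cap V[H])/\Phi(H)$, coarsens to a mixture $\mathbb{P}^\nu_{\mathrm{Tr}(I_o\cap V[H]),\infty}$, and then appeals to an exhaustion limit together with Proposition~\ref{prop:alternative_char} to identify the conditional law as a Gibbs measure. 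You instead work with finite induced subgraphs $H''\subseteq\mathrm{Tr}(I_o)$ directly and exhibit an explicit Gibbs augmentation $\Phi'$ of $A\cap\mathrm{Tr}(I_o)$, defined via the outside-$H''$ connectivity relation inherited from $(A,\Phi)$, and verify the augmented UMSF Gibbs property on $\mathrm{Tr}(I_o)$ without passing to a weak limit. The geometric observation you invoke — that augmented paths between vertices of $I_o$ stay in $I_o$, so that $\Phi(H'')$ agrees with $\Phi'(H'')$ on $\partial_{\mathrm{Tr}(I_o)}H''$ and is singletons on the rest of $\partial_G H''$, whence the quotients $H''/\Phi(H'')$ and $H''/\Phi'(H'')$ coincide — is exactly the right lemma. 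Your route is arguably cleaner since it constructs the Gibbs augmentation explicitly; the paper's is slightly shorter because it reuses Proposition~\ref{prop:alternative_char}. Two points you flag but would need to write out fully: (i) the consistency of $\Phi'$ as a boundary map on the random graph $\mathrm{Tr}(I_o)$, which follows from the same ``paths stay in $I_o$'' fact applied with two nested windows $H''\subseteq K''$; and (ii) that on the event $\{V[H'']\subseteq I_o\}$, the data $(I_o, A\setminus E[\mathrm{Tr}(I_o)], \Phi'_{H''})$ is $\mathcal{G}_{H''}$-measurable — this holds because replacing $A\cap E[H'']$ by any other spanning tree of $H''/\Phi(H'')$ cannot change $I_o$ given $\mathcal{G}_{H''}$, and is what licenses the coarsening in your final step.
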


 \begin{proof}[Proof of Proposition~\ref{prop:local_resampling_general}]
    We begin by observing that a related resampling property holds in finite volume. Let $H$ be a finite subgraph of $G$, so that $\Phi(H)$ is an equivalence relation on $\partial H$.
    For each forest $F\in\mathcal{F}(H,\Phi(H))$, let $T_o[F]$ be the connected component of $o$ in $F$ considered as a subgraph of $H/\Phi(H)$, let $I_o[F]=I_o^{H,\Phi(H)}[F]$ be the vertex set of $T_o[F]$, and let $\mathrm{Tr}(I_o[F])$ be the subgraph of $H/\Phi(H)$ induced by $I_o[F]$.
    We make three observations. First, note that $T_o[F]$ is always a spanning tree of $\mathrm{Tr}(I_o[F])$. Second, note that if we let $T^\prime$ be any other spanning tree of $\mathrm{Tr}(I_o[F])$ and let $F^\prime$ be formed from $F$ by deleting $T_o[F]$ and adding $T'$, then $I_o[F^\prime]=I_o[F]$. Finally, we observe that the probability $\pr_{H,\beta}^{\phi_n}$ assigns to forests $F\in\mathcal{F}(H,\phi_n)$ depends only on the cardinality of their edge sets, so that
    $\pr_{H,\beta}^{\phi_n}(F)=\pr_{H,\beta}^{\phi_n}(F^\prime)$. Putting these observations together gives that if $F\sim\pr_{\Lambda_n,\beta}^{\Phi(H)}$, then conditional on $I_o[F]$ and the restriction of $F$ to the complement of $I_o[F]$, the restriction of $F$ to $I_o[F]$ is distributed as the uniform spanning tree on $\mathrm{Tr}(I_o[F])/\Phi(H)$. 

    By the augmented Gibbs property, it follows that the conditional distribution of the restriction of $A$ to $I_o[A\cap H]=I_o^{H,\Phi(H)}[F]$ given $\mathcal{G}_H$, $I_o[A\cap H]$, and the restriction of $A$ to the complement of $I_o[A \cap H]$ is almost surely equal to the uniform spanning tree measure on $\mathrm{Tr}(I_o[A\cap H])/\Phi(H)$. In particular, this conditional distribution depends only on $\Phi(H)$ and $I_o[A\cap H]$. Moreover, the consistency property of the boundary map $\Phi$ implies that $I_o[A\cap H]=I_o^{H,\Phi(H)}[A \cap H]$ is equal to the intersection of $I_o$ with the vertex set of $H$.
     Thus, if for each finite subgraph $H$ of $G$ we define $\mathcal{F}_H$ to be the sigma-algebra generated by $\mathcal{G}_H$, $I_o \cap V[H] = I_o^{H,\Phi(H)}[A\cap H]$, and the restriction of $A$ to the complement of $I_o$, then the conditional law of the restriction of $A$ to $I_o \cap V[H]$ given $\mathcal{F}_H$ is a.s.\ equal to the uniform spanning tree measure on $\mathrm{Tr}(I_o \cap H)/\Phi(H)$. Since this law depends only on $I_o\cap H$ and $\Phi(H)$, it follows that the conditional law of the restriction of $A$ to $I_o \cap V[H]$ given $I_o$ and the restriction of $A$ to the complement of $I_o$ is almost surely of the form $\mathbb{P}_{\mathrm{Tr}(I_o \cap V[H]),\infty}^\nu$ for some probability measure $\nu$ on the boundary of $\mathrm{Tr}(I_o \cap V[H])$ in $\mathrm{Tr}(I_o)$, where the measure $\nu$ is determined by the conditional distribution of $\Phi(H)$ given this information. Taking a limit as $H$ exhausts $G$ and using Proposition~\ref{prop:alternative_char} yields the claim. \qedhere

 \end{proof}

 \subsection{The structure of the augmented connectivity relation}
\label{subsec:infinite_locus}

In this section we prove the following proposition about the structure of the augmented connectivity relation in a translation-invariant arboreal gas Gibbs measure on $\Z^d$ and then deduce Theorem~\ref{thm:resampling} from this proposition together with Proposition~\ref{prop:local_resampling_general}.

\begin{proposition}\label{prop:bulk}
Let $d\geq 1$ and $\beta\in[0,\infty)$ and let $(A,\Phi)$ be distributed as a translation-invariant augmented $\beta$-arboreal gas Gibbs measure on $\Z^d$. The following hold:
    \begin{enumerate}
        \item The augmented connectivity relation $\xleftrightarrow{(A,\Phi)}$ has at most one infinite equivalence class a.s.
        \item If the augmented connectivity relation $\xleftrightarrow{(A,\Phi)}$ has an infinite equivalence class, then the subgraph of $\Z^d$ induced by this equivalence class is connected a.s.
    \end{enumerate}
\end{proposition}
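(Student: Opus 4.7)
The plan is to establish both parts of Proposition~\ref{prop:bulk} via Burton--Keane arguments adapted to the augmented setting, with the augmented Gibbs property (Definition~\ref{definition:IVGM}) playing the role of finite-energy/insertion-tolerance. Concretely, for every finite $H\subseteq \Z^d$ the conditional law of $A\cap H$ given $\mathcal G_H$ is $\pr_{H,\beta}^{\Phi(H)}$, which assigns positive mass to every forest of $H$ compatible with the boundary equivalence relation $\Phi(H)$; combined with translation invariance, this gives considerable freedom to redraw $A$ in finite windows with positive conditional probability.

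For part (1), I would introduce an \emph{augmented trifurcation} at $v$ to be the event that $v\in A$ has degree at least three in $A$ and that, after removing $v$ and its incident edges from $A$, three of the subtrees attached to neighbours of $v$ lie in three distinct infinite augmented equivalence classes of the modified augmented subgraph. Suppose for contradiction that, with positive probability, there are two or more infinite augmented equivalence classes; then by translation invariance there is some $R$ such that with positive probability at least two distinct infinite augmented classes have representatives within the box $\Lambda_R$ around $o$. The augmented Gibbs property then permits us to resample $A\cap \Lambda_R$ to a specific forest configuration that routes three local branches into three such distinct classes, making $o$ an augmented trifurcation. This yields positive density of augmented trifurcations. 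The usual Burton--Keane counting now applies to the forest $(A\cap \Lambda_N)/\Phi(\Lambda_N)$, which is acyclic by the definition of the arboreal gas: its ``leaves'' correspond to boundary vertices or $\Phi(\Lambda_N)$-equivalence classes at $\partial \Lambda_N$ and number at most $|\partial \Lambda_N|=O(N^{d-1})$, while the trifurcations inside $\Lambda_n$ with $N=2n$ number at least $c|\Lambda_n|=\Omega(n^d)$; the standard inequality (number of degree-$\geq 3$ internal vertices is at most the number of leaves minus two per tree) yields the contradiction.

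For part (2), we may assume by part (1) that there is almost surely a unique infinite augmented equivalence class $C$ on the event that one exists. I would first note that every component of $\mathrm{Tr}(C)$ is infinite: if $v\in C$ lay in a finite tree $T_v$ of $A$ and $H$ were a finite subgraph of $\Z^d$ containing $T_v$, then $T_v$ has no edge reaching $\partial H$, so the $\Phi(H)$-equivalence class of $v$ inside $(A\cap H)/\Phi(H)$ would be contained in $V[T_v]$, contradicting the infinitude of $v$'s augmented class; hence every $v\in C$ lies in an infinite tree of $A$ and the $\mathrm{Tr}(C)$-component of $v$ is infinite. Now supposing for contradiction that $\mathrm{Tr}(C)$ has at least two components with positive probability, I would run a second Burton--Keane argument on the components of $\mathrm{Tr}(C)$, viewed as ``clusters'' of the random set $C\subseteq V$. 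Defining a \emph{$\mathrm{Tr}(C)$-trifurcation} at $v$ as a vertex $v\in C$ whose removal from $C$ splits its $\mathrm{Tr}(C)$-component into at least three infinite pieces in the induced subgraph on $C\setminus\{v\}$, I would use the augmented Gibbs property to establish positive density of such trifurcations: by conditioning on a box $B$ around $v$ meeting several $\mathrm{Tr}(C)$-components and resampling $A\cap B$ to produce a prescribed local branching (including, if necessary, the extension of finite trees inside $B$ into attachments with external infinite trees), one can force $v$ to become a $\mathrm{Tr}(C)$-trifurcation with positive conditional probability. The surface-area counting then gives the contradiction as in part (1).

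The principal obstacle will be the insertion-tolerance step in part (2), since the set $C$ depends on $A$ in a nonlocal way: modifying $A$ inside a box can bring new vertices into, or remove vertices from, $C$, because membership in $C$ requires a vertex to sit in an infinite tree of $A$. The coupling between finite-volume changes of $A$ and the resulting changes in $C$ is mediated entirely by the boundary map $\Phi$, so I expect the argument to require careful control of which boundary vertices of $B$ sit in infinite augmented components under $\Phi(B)$, and of how the resampled forest in $B$ attaches to the pre-existing infinite trees outside $B$, in order to verify that the intended component-trifurcations actually arise under the Gibbs kernel.
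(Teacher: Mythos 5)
Your proposal takes a genuinely different route from the paper. The paper does not run a trifurcation-counting Burton--Keane argument at all: it invokes as a black box the Aldous--Lyons ends bound (Proposition~\ref{prop:ends}: every component of a translation-invariant random subgraph of $\Z^d$ has at most two ends a.s.) and then uses merge/deletion tolerance (Lemma~\ref{lem:finite_energy}) to glue infinite trees together so as to manufacture a component with too many ends. This is done in three steps: at most \emph{two} infinite augmented classes (glue three to make a three-ended tree), then one-endedness of all infinite trees (Theorem~\ref{thm:ends}), then at most \emph{one} class (glue two to make a two-ended tree, contradicting one-endedness). Part (2) is handled the same way.

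The concrete gap in your part (1) is the passage from ``two or more infinite augmented classes meet $\Lambda_R$'' to ``positive density of augmented trifurcations''. You say the resampling ``routes three local branches into three such distinct classes'', but if there are exactly two infinite augmented classes, each realized near $\Lambda_R$ by a one-ended tree, this is impossible: any two of the three branches emanating from $o$ that attach to the same class remain connected to each other in $(A\cap K)/\Phi(K)$ for large $K$, since the tree they attach to and the boundary map $\Phi_{\Lambda_R}$ outside the box are both unchanged by the resampling, so they land in the \emph{same} class after modification and $o$ is not a trifurcation. (Deleting an edge of a one-ended tree does not create a new infinite class either.) To get around this you would need either to first establish a $0$--$1$--$\infty$ dichotomy for the number of infinite augmented classes --- but that requires ergodicity for the full invariant $\sigma$-algebra $\mathcal I$, which the paper only obtains \emph{after} proving this proposition (Corollary~\ref{cor:full_ergodicity}; at this stage one only has triviality on $\mathcal I_S$) --- or to handle the two-class case by a separate argument, which is exactly what the paper's one-endedness step does. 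You correctly flag the analogous difficulty in part (2) but do not resolve it; the paper's route via Proposition~\ref{prop:ends} and merge tolerance sidesteps the nonlocal dependence of $C$ on $A$ that worries you, because the glued configuration is only required to \emph{exist} with positive probability, not to be coupled carefully with the original.
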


It suffices to prove this in the case that the law of $(A,\Phi)$ is extremal in $\mathcal{M}_\beta^T$, taking a decomposition in terms of such extremal measures otherwise.

\medskip

 The proof of Proposition~\ref{prop:bulk} will make use of the following important fact, which follows from the work of Aldous and Lyons \cite{AL07} as explained in detail in \cite[Section 3]{MR3820434} and which is closely related to the classical work of Burton and Keane \cite{burton1989density}.

 \begin{proposition} \label{prop:ends}
   Let $d\geq 1$ and let $S$ be a translation-invariant random subgraph of $\Z^d$. Then every connected component of $S$ has at most two ends almost surely.
 \end{proposition}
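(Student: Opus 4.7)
The plan is to follow the classical Burton--Keane strategy, adapted to bound the number of ends per component rather than the number of distinct infinite components. First I would reduce to the case that the law of $S$ is translation-ergodic by ergodic decomposition of translation-invariant measures, since ``every connected component has at most two ends'' is a translation-invariant event. So we may assume ergodicity throughout.

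Suppose for contradiction that some connected component of $S$ has at least three ends with positive probability. Call a vertex $v \in V[S]$ a \emph{trifurcation} if $v$ lies in an infinite connected component $C$ of $S$ and $C \setminus \{v\}$ has at least three infinite connected components. A standard fact from ends theory guarantees that any connected graph with at least three ends contains a trifurcation, so the set $T$ of trifurcations is nonempty with positive probability. By translation-invariance $\pr(v \in T)$ is a constant $p$ independent of $v$, and since $\E{|T \cap \Lambda_n|} = p|\Lambda_n|$, the nonemptiness of $T$ with positive probability forces $p > 0$.

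The combinatorial core is the Burton--Keane counting bound. For each $n \geq 1$ let $T_n = T \cap \Lambda_n$ where $\Lambda_n = [-n,n]^d$. At each trifurcation, select (via a measurable, translation-equivariant rule) three pairwise vertex-disjoint infinite simple paths in its component starting at that trifurcation. For each $v \in T_n$, follow each of the three chosen rays until it either exits $\Lambda_n$ or meets another vertex of $T_n$; this produces a finite multigraph on $T_n \cup L_n$, where $L_n \subseteq \partial \Lambda_n$ is the set of exit points, in which every $v \in T_n$ has degree at least three. A standard forest-counting argument then gives $|T_n| \leq 2|L_n| \leq 2|\partial \Lambda_n| = O(n^{d-1})$ almost surely, which contradicts $\E{|T_n|} = p|\Lambda_n| \asymp n^d$ as $n \to \infty$.

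The main obstacle will be the careful construction of the trifurcation multigraph, in particular the measurable, translation-equivariant selection of three disjoint rays at each trifurcation and the verification of the forest-counting step once one accounts for cycles and shared rays among distinct trifurcations inside $\Lambda_n$. The cleanest way to handle this is probably to package the argument as a single mass-transport identity for unimodular random graphs as in \cite{AL07}, exploiting the fact that every translation-invariant random subgraph of $\Z^d$ is unimodular; the reference \cite{MR3820434} presumably gives this directly.
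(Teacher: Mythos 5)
The paper does not prove Proposition~\ref{prop:ends}; it cites it as following from Aldous--Lyons \cite{AL07} as explained in \cite[Section 3]{MR3820434}, with only a remark that the argument is a relative of Burton--Keane. Your closing paragraph --- packaging the argument as a mass-transport identity for unimodular random graphs and pointing to \cite{AL07} and \cite{MR3820434} --- is therefore exactly what the paper does, and is the correct ``official'' route.

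Your attempt at a self-contained Burton--Keane argument, however, has a concrete gap in the very first structural step. You assert that ``a standard fact from ends theory guarantees that any connected graph with at least three ends contains a trifurcation,'' where a trifurcation is a single vertex $v$ with $C \setminus \{v\}$ having $\geq 3$ infinite components. This is false. For example, glue three infinite ladders $\{0,1,2,\dots\} \times \{0,1\}$ along their initial rungs: the resulting $2$-connected graph has three ends, but removing any single vertex leaves at most two infinite components. In the classical Burton--Keane setting one never needs such a fact, because insertion tolerance is used to \emph{manufacture} trifurcation vertices by locally editing the configuration inside a box. Here there is no product structure to invoke insertion tolerance, so you cannot assume trifurcation vertices exist; you must instead work with ``trifurcation boxes'' (finite connected sets whose removal leaves $\geq 3$ infinite pieces) or, more cleanly, run the whole argument via a mass-transport inequality as in \cite{AL07}. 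With trifurcation boxes, the forest-counting step also requires more care than you indicate, since the three chosen rays emanating from distinct boxes inside $\Lambda_n$ can merge and create cycles; one has to prune to an actual forest before applying the leaf-counting bound. These are fixable, and indeed are exactly the points the cited references address, but as written your sketch does not go through.
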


 \medskip
 Fix $\beta\in(0,\infty)$, and $d\geq 2$ and let $\mathbb{Q}$  denote an extremal $\beta$-arboreal gas augmented Gibbs measure on $\Z^d$, and let $(A,\Phi)\sim\mathbb{Q}$.
 The Gibbs property tells us that for any $H\in\mathcal{S}^f(G)$, we have that 
 \[(A,\Phi)\sim (A,\Phi)\cup F \setminus E[H],\]
 $F$ has conditional law $\pr_{H,\beta}^{\Phi(H)}$ given $(A,\Phi)$. Since $\beta\in (0,\infty)$, this implies in particular that, conditional on $A\setminus E[H]$ and $\Phi(H)$, there is a.s.\ a positive probability that $A\cap E[H]=F^\prime$
 for any forest $F^\prime\in\mathcal{F}(H,\Phi(H))$. 
 This leads in particular to the following lemma.

 \begin{lemma}\label{lem:finite_energy}
   Fix $d\geq 2$, $\beta\in(0,\infty)$, let $\mathbb{Q}_\beta$ be an augmented $\beta$-arboreal gas Gibbs measure on $\Z^d$, and 
     let $(A,\Phi)$ be distributed as $\mathbb{Q}$. 
          \begin{enumerate}\item If $H$ is a finite subgraph of $\Z^d$ then
\begin{equation}
\label{eq:deletion_tolerance}
\mathbb{Q}_\beta(H \cap A = \emptyset \mid \mathcal{G}_H) >0 \qquad \text{\emph{a.s.}}
\end{equation}
     \item If $H$ is a finite \emph{connected} subgraph of $\Z^d$ then
     \begin{equation}
     \label{eq:merge_tolerance}
\mathbb{Q}_\beta(\text{\emph{all vertices of $H$ belong to the same augmented connectivity class}}\mid\mathcal{G}_H)>0 \qquad \text{\emph{a.s.}}
     \end{equation}
\end{enumerate}
 \end{lemma}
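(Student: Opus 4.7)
The plan is to deduce both claims directly from the augmented Gibbs property, which states that the conditional law of $A \cap E[H]$ given $\mathcal{G}_H$ is $\mathbb{P}_{H,\beta}^{\Phi(H)}$. Since $\mathcal{F}(H,\Phi(H))$ is a finite set and the probability mass function $\mathbb{P}_{H,\beta}^{\Phi(H)}(F) \propto \beta^{|F|}$ assigns strictly positive probability to every element (as $\beta \in (0,\infty)$), it will suffice in each part to exhibit a single element $F^\star \in \mathcal{F}(H,\Phi(H))$ which witnesses the relevant event, i.e.\ for which setting $A \cap E[H] = F^\star$ realises the event in question. Once such a witness is produced, the conditional probability in \eqref{eq:deletion_tolerance} or \eqref{eq:merge_tolerance} is at least $\beta^{|F^\star|}/Z_\beta^{\Phi(H)} > 0$ almost surely, which is all that is needed.

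For part 1, the witness is the empty edge set on $V[H]$. This trivially belongs to $\mathcal{F}(H,\Phi(H))$ for every equivalence relation $\Phi(H)$, since the graph $(V[H],\varnothing)$ has no edges and hence is acyclic regardless of which vertices of $\partial H$ are subsequently identified. The event $\{A \cap E[H] = \varnothing\}$ is exactly $\{A \cap H = \varnothing\}$ (interpreting the intersection at the level of edges), so the Gibbs property immediately yields \eqref{eq:deletion_tolerance}.

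For part 2, I would use the assumption that $H$ is connected to argue that the quotient $H/\Phi(H)$ is connected, so it admits a spanning tree $T$; lifting $T$ back to a subset $F^\star \subseteq E[H]$ gives an element of $\mathcal{F}_T(H,\Phi(H)) \subseteq \mathcal{F}(H,\Phi(H))$ with $F^\star/\Phi(H) = T$ connected on $V[H/\Phi(H)]$. On the event $\{A \cap E[H] = F^\star\}$, the consistency property applied with $K=H$ yields that for any $u,v \in V[H]$, we have $u \xleftrightarrow{(A,\Phi)} v$ iff $u$ and $v$ are connected in $(A \cap H)/\Phi(H) = F^\star/\Phi(H)$, which they are. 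Thus all vertices of $V[H]$ belong to a single augmented connectivity class on this event, and \eqref{eq:merge_tolerance} follows by the same positivity argument.

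There is no real obstacle: the technical content has already been packaged into the augmented Gibbs property and the consistency axiom, so the proof reduces to exhibiting the two combinatorial witnesses above. The only point warranting care is the translation between the finite-volume statement ``$F^\star/\Phi(H)$ is connected'' and the infinite-volume statement ``all vertices of $H$ lie in one augmented connectivity class,'' which is exactly what consistency was designed to deliver.
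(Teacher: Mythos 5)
Your proof is correct and matches the paper's argument: part 1 follows by observing the empty configuration has positive conditional mass under $\mathbb{P}_{H,\beta}^{\Phi(H)}$ since $\beta<\infty$, and part 2 by noting $H/\Phi(H)$ is connected, so it admits a spanning tree which has positive conditional mass since $\beta>0$ and whose occurrence merges all of $V[H]$ into a single augmented class. The paper's proof is just a more compressed rendition of the same two witnesses.
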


We refer to the property \eqref{eq:deletion_tolerance} of $\mathbb{Q}_\beta$ as \textbf{deletion tolerance} and the property \eqref{eq:merge_tolerance} as \textbf{merge tolerance}.

 \begin{proof}[Proof of Lemma~\ref{lem:finite_energy}]
 The deletion tolerance property \eqref{eq:deletion_tolerance} is an immediate consequence of the augmented Gibbs property since $\beta<\infty$. We now turn to the merge tolerance property \eqref{eq:merge_tolerance}.
Since $H$ is connected, $H/\Phi(H)$ is connected and therefore admits at least one spanning tree, which is given positive mass by the conditional measure $\mathbb{P}^{\Phi(H)}_{H,\beta}$ since $\beta>0$. On the event that the restriction of the arboreal gas to $H$ is equal to such a spanning tree, all vertices of $H$ belong to the same augmented connectivity class.
 \end{proof}
 
 The proofs in the remainder of this section and in the next will generally proceed by assuming that $(A,\Phi)$ satisfies a certain property with positive probability and then attempting to derive a contradiction.
 We will use the above observation to make local edits to $(A,\Phi)$, stitching together or separating infinite subgraphs as appropriate. Either ergodicity of $\pi_* \mathbb{Q}$, Proposition \ref{prop:ends}, or a combination thereof will then be used to generate the desired contradictions.

 \begin{remark}Several of the proofs in this section are of a similar flavour to those of \cite{hutchcroft2017indistinguishability,hutchcroft2016wired,timar2018indistinguishability}, which studied uniform spanning forests using a property known as \emph{update tolerance} or \emph{weak insertion tolerance}. There are however several important differences: 1) We need to understand the structure of the augmented connectivity relation, which was not a feature of those works. 2) Since $\beta<\infty$, we can use deletion tolerance to simplify several steps. 3) Our augmented Gibbs framework allows us to put many of the \emph{ad hoc} seeming parts of those papers on a more robust conceptual footing.
\end{remark}

We now begin the proof of Proposition~\ref{prop:bulk} in earnest. We begin by proving that $\stackrel{(A,\Phi)}{\longleftrightarrow}$ has at most two infinite  equivalence classes almost surely.

 \begin{lemma}\label{lem:unique_inf_class}
   Fix $d\geq 2$, $\beta\in[0,\infty)$, let $\mathbb{Q}_\beta$ be an extremal translation-invariant augmented $\beta$-arboreal gas Gibbs measure on $\Z^d$, and
     let $(A,\Phi)$ be distributed as $\mathbb{Q}$. Then the augmented connectivity relation $\stackrel{(A,\Phi)}{\longleftrightarrow}$ has at most two infinite equivalence classes almost surely.
 \end{lemma}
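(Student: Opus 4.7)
The plan is to run a Burton--Keane style argument in the augmented setting. Suppose for contradiction that $\mathbb{P}(N\geq 3)>0$, where $N=N(A,\Phi)$ denotes the number of infinite equivalence classes of $\stackrel{(A,\Phi)}{\longleftrightarrow}$. Call a vertex $v$ an \emph{augmented trifurcation} if there exist three distinct $A$-neighbors $u_1,u_2,u_3$ of $v$ such that $u_1,u_2,u_3$ lie in three distinct infinite classes of the augmented connectivity relation of $(A,\Phi)\setminus\{e\in E[A]:v\in e\}$. I will show that the origin is an augmented trifurcation with positive probability, then derive a contradiction from a deterministic boundary bound.

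For the first step, I use the following geometric observation: whenever an infinite augmented class $C$ meets $B_r$, it must contain a vertex $y\in\partial B_{r+1}$ connected to $C\cap B_r$ inside $A\cap E[B_{r+1}]$. Indeed, any path in $(K\cap A)/\Phi(K)$ joining an interior vertex of $B_r$ to a vertex outside $B_{r+1}$ must cross $\partial B_{r+1}$ through a genuine edge of $A$, since $\Phi(K)$ only identifies vertices of $\partial K$, which are disjoint from $\partial B_{r+1}$ whenever $K\supseteq B_{r+1}$. Moreover, representatives of distinct augmented classes on $\partial B_{r+1}$ lie in distinct equivalence classes of $\Phi(B_{r+1})$ by consistency. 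Consequently the $\mathcal{G}_{B_{r+1}}$-measurable event $E^{(r)}$ that $\Phi(B_{r+1})$ admits three distinct equivalence classes $\eta_1,\eta_2,\eta_3$ whose augmented classes in $(A,\Phi)\setminus E[B_{r+1}]$ are all infinite has positive probability once $r$ is large enough, since it is implied by the positive-probability event that three distinct infinite augmented classes all meet $B_r$. Choosing $r$ also large enough to fit three internally disjoint simple paths $P_1,P_2,P_3$ in $B_{r+1}$ from $o$ to boundary representatives $y_i\in\eta_i\cap\partial B_{r+1}$ that touch $\partial B_{r+1}$ only at their endpoints, the forest $F^*=P_1\cup P_2\cup P_3$ (with all remaining vertices of $B_{r+1}$ isolated) lies in $\mathcal{F}(B_{r+1},\Phi(B_{r+1}))$, as distinctness of the $\eta_i$ prevents any cycle from forming in $F^*/\Phi(B_{r+1})$. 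The augmented Gibbs property then produces a strictly positive lower bound on $\mathbb{P}(A\cap E[B_{r+1}]=F^*\mid\mathcal{G}_{B_{r+1}})$ on $E^{(r)}$, and on the event $\{A\cap E[B_{r+1}]=F^*\}$ the origin is an augmented trifurcation.

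The second step is a deterministic counting bound: in any realization, the number $T_n$ of augmented trifurcations lying in $B_n\setminus\partial B_n$ satisfies $T_n\leq|\partial B_n|$. For each augmented trifurcation $v\in B_n\setminus\partial B_n$ with witnesses $u_1,u_2,u_3$, let $K_i$ be the connected component of $u_i$ in $A\cap E[B_n]\setminus\{e:v\in e\}$. Each $K_i$ must meet $\partial B_n$: otherwise $K_i$ would be a finite subset of $B_n\setminus\partial B_n$ with no $A$-edges exiting, and the same geometric argument as above would force the augmented class of $u_i$ in $(A,\Phi)\setminus\{e:v\in e\}$ to be finite, contradicting the trifurcation hypothesis. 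Therefore every augmented trifurcation in $B_n\setminus\partial B_n$ has degree at least three in the Steiner tree of $T'\cap\partial B_n$ inside its host tree $T'$ in the forest $A\cap E[B_n]$, and the standard identity $\sum_{d\geq 1}(d-2)I_d=-2$ for finite trees bounds the number of such vertices in $T'$ by $|T'\cap\partial B_n|-2$. Summing over the trees $T'$ yields $T_n\leq|\partial B_n|$.

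Combining the two steps, translation-invariance gives $\mathbb{E}[T_n]=p\cdot(|B_n|-|\partial B_n|)$, where $p=\mathbb{P}(o\text{ is an augmented trifurcation})>0$, while the deterministic bound gives $\mathbb{E}[T_n]\leq|\partial B_n|$. Since $|\partial B_n|/|B_n|\to 0$ as $n\to\infty$, this forces $p=0$, the required contradiction. The main obstacle I anticipate is the verification that every branch of an augmented trifurcation must reach $\partial B_n$; this key geometric fact is what converts the augmented Burton--Keane bound into a standard tree-counting argument, and it relies essentially on the fact that the boundary maps $\Phi(K)$ encode non-$A$ connectivity only through $\partial K$.
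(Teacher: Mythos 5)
Your proof is correct in its essentials, but it takes a genuinely different route from the paper's. The paper does not run a trifurcation count directly: instead it uses the deletion- and merge-tolerance properties (Lemma~\ref{lem:finite_energy}) to stitch three infinite augmented classes along a finite path into a single tree with at least three ends, and then invokes the black-box Proposition~\ref{prop:ends} (translation-invariant random subgraphs of $\Z^d$ have components with at most two ends), which is where the paper's Burton--Keane content lives. You instead re-derive Burton--Keane from scratch in the augmented setting, defining augmented trifurcations, proving the density-vs-boundary counting bound, and concluding by amenability. Both are legitimate; the paper's route is shorter once the finite-energy lemma and the ends proposition are in hand, and it reuses Proposition~\ref{prop:ends} which is needed elsewhere in the paper anyway, whereas your argument is more self-contained and exposes the geometry of augmented connectivity (in particular the observation that any augmented connection crossing $\partial B_{r+1}$ must be carried by genuine $A$-edges, since $\Phi(K)$ only identifies vertices on $\partial K$) which is worth making explicit.

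A few places where your write-up would need tightening. (i) The step asserting that three infinite augmented classes meeting $B_r$ implies $E^{(r)}$ is stated as an immediate consequence of your boundary-crossing observation, but it actually requires a small pigeonhole argument: each infinite class $C_i$ has infinitely many vertices outside $B_{r+1}$, each of which is augmented-connected in $(A,\Phi)\setminus E[B_{r+1}]$ to some vertex of the finite set $\partial B_{r+1}$, so some single boundary vertex (and hence some $\Phi(B_{r+1})$-class) has infinite augmented class in the edited configuration; distinctness of the resulting $\eta_i$ then follows since any identification under $\Phi(B_{r+1})$ would merge the $C_i$. (ii) You should dispose of $\beta=0$ separately (trivial, since $A=\emptyset$ a.s.\ and every augmented class of an isolated interior vertex is a singleton), as the Gibbs lower bound $\mathbb{P}(A\cap E[B_{r+1}]=F^*\mid\mathcal{G}_{B_{r+1}})>0$ needs $\beta>0$; the paper makes this same reduction. (iii) The existence of three internally disjoint paths in $B_{r+1}$ from $o$ to arbitrary distinct boundary vertices $y_1,y_2,y_3$, touching $\partial B_{r+1}$ only at the endpoints, is a standard geometric fact for $d\geq 2$ and $r$ large, but deserves a word. (iv) You should also verify that $y_1,y_2,y_3$ remain in distinct augmented classes of $(A,\Phi)\setminus\{e:o\in e\}$ and not merely of $(A,\Phi)\setminus E[B_{r+1}]$; this uses that the paths $P_i\setminus o$ are internally disjoint and meet $\partial B_{r+1}$ only at their endpoints, so re-entering $B_{r+1}$ at any $y_j$ is a dead end. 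None of these is a serious obstruction, but they are the precise points at which the ``standard'' Burton--Keane argument has to interact with the augmentation formalism.
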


\begin{figure}
\centering
\includegraphics[width=\textwidth]{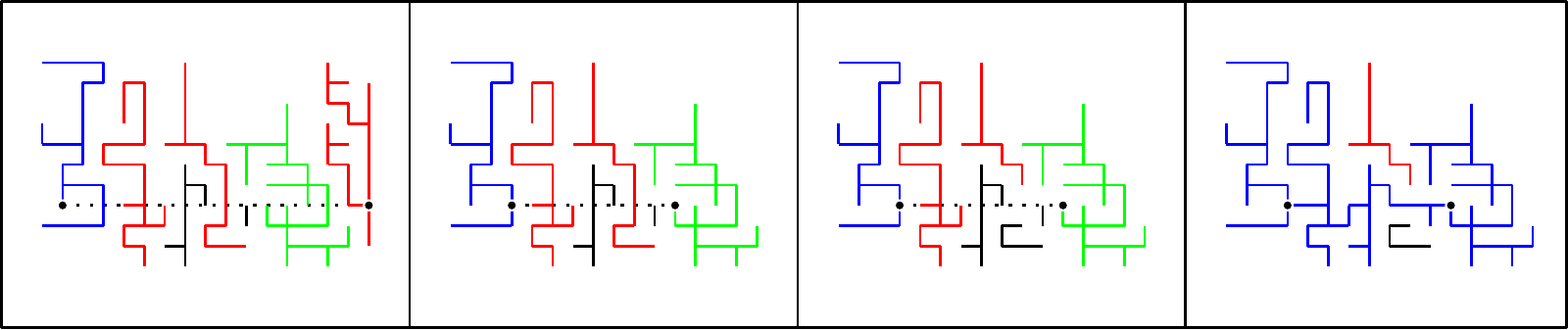}
\caption{Schematic illustration of the proof of Lemma~\ref{lem:unique_inf_class}. Infinite augmented connectivity classes are represented by colours, finite classes are black. Far left: a path $\gamma$ (dotted line) intersecting three distinct infinite augmented connectivity classes. Centre left: By shortening $\gamma$ if necessary, we may assume that $\gamma$ intersects exactly three distinct infinite augmented connectivity classes, two of which it intersects only at its endpoints. Centre right: By deleting finitely many edges from the configuration if necessary, we can make it so that each infinite augmented connectivity class intersecting $\gamma$ contains exactly one $A$-component intersecting $\gamma$. Far right: Using Lemma~\ref{lem:finite_energy}, we may glue together the components intersecting $\gamma$ to create a component with three or more ends, contradicting Proposition~\ref{prop:ends}.}
\label{fig:push_and_glue}
\end{figure}

 \begin{proof}[Proof of Lemma~\ref{lem:unique_inf_class}]
 An illustration of the proof is given in Figure~\ref{fig:push_and_glue}.
 The claim is trivial for $\beta=0$, so we restrict to the case $\beta>0$.
   Suppose for contradiction that the event \[E_1=\Bigl\{\text{$\stackrel{(A,\Phi)}{\longleftrightarrow}$ has three or more infinite equivalence classes}\Bigr\}\] 
   has positive probability. 
   For each $x\in \Z^d$, write $[x]$ for the equivalence class of $x$ under the augmented connectivity relation.
   Because $\mathbb{Q}_\beta(E_1)>0$, 
   there must exist three vertices $x,y,$ and $z$ such that $[x]$, $[y]$, and $[z]$ are all distinct with positive probability. 
   Fix three such vertices $x,y,z\in \Z^d$ and let $E_2$ be the event that this occurs.
    Since $\Z^d$ is $2$-connected, there exists a 
   simple path $\gamma$ in $\Z^d$ passing through $x$, $y$, and $z$. In particular, there must exist a finite simple path $\gamma$ that intersects at least three distinct infinite equivalence classes of the augmented connectivity relation with positive probability. Reducing the length of $\gamma$ if necessary, we may assume that, with positive probability, $\gamma$ intersects at least three infinite equivalence classes of the augmented connectivity relation, two of which it intersects only at its endpoints. Denote this event by $E_2$. Using deletion tolerance, it follows that, with positive probability, $\gamma$ intersects exactly three infinite clusters of $A$, all of which belong to distinct augmented equivalence classes, and with two of these clusters intersecting $\gamma$ only at its endpoints. Indeed, denoting this event by $E_3$, we note that if $E_2$ occurs but $E_3$ does not, so that the infinite augmented connectivity class $\mathscr{C}$ intersecting the interior of $\gamma$ contains multiple infinite $A$-components intersecting $\gamma$, then we can modify the configuration to make $E_3$ occur by choosing one of the infinite $A$ components that belongs to $\mathscr{C}$ and intersects $\gamma$, and deleting from $A$ all edges that are incident to $\gamma$ and belong to an $A$-component that belongs to $\mathscr{C}$ but is not equal to the one component we chose to keep.
Using merge tolerance allows us to glue together these three infinite $A$-components into a single infinite cluster by modifying $A$ on $\gamma$ in a way that preserves absolute continuity, and doing so creates a three ended component. 
Thus, there is a positive probability that $A$ contains a tree with at least three ends. Since $A$ is translation-invariant this contradicts Proposition \ref{prop:ends}, and so $\stackrel{(A,\Phi)}{\longleftrightarrow}$ has at most two infinite equivalence classes almost surely.
 \qedhere


   \end{proof}

The next step of the proof of Proposition~\ref{prop:bulk} is to prove Theorem~\ref{thm:ends}, which states that every infinite component of any translation-invariant $\beta$-arboreal gas Gibbs measure is one-ended almost surely for every $d\geq 1$ and $\beta\in (0,\infty)$.

\begin{proof}[Proof of Theorem \ref{thm:ends}] The claim is trivial if $\beta=0$ or $d=1$ so we may assume that $\beta>0$ and $d\geq 2$.
It suffices to prove the claim for measures of the form $\mathbb{P}=\pi_*\mathbb{Q}$ where $\mathbb{Q}=\mathbb{Q}_\beta$ is an extremal translation-invariant $\beta$-arboreal gas Gibbs measure on $\Z^d$. Let $(A,\Phi)\sim \mathbb{Q}$. By Proposition \ref{prop:ends}, all trees in $A$ have at most two ends almost surely, so we need only rule out the existence of two-ended trees.
 Note that if $e=\{x,y\}$ is an edge of $\Z^d$ we have by the augmented Gibbs property that
 \[\mathbb{Q}(e\in A \mid \mathcal{G}_e)=\frac{\beta}{1+\beta} \mathbbm{1}\left(\Phi(e)(x,y)=0\right),
 \]
 where we abuse notation to identify $e$ with the subgraph of $\Z^d$ having $\{x,y\}$ as its only vertices and $e$ as its only edge. Thus, we must have that $\Phi(e)(x,y)=0$ almost surely for every edge $e=\{x,y\}\in A$. It follows that, almost surely, if $A$ contains a two-ended tree $T$ and $e$ is an edge of $T$ such that $T\setminus e$ has two infinite connected components, then $(A,\Phi)\setminus e$ has one more infinite augmented connectivity class than $(A,\Phi)$ (where we allow both augmented subgraphs to have infinitely many infinite augmented connectivity classes in this statement). Thus, it follows by deletion tolerance that if $A$ has at least $n$ two-ended components with positive probability then $(A,\Phi)$ has at least $n+1$ infinite augmented equivalence classes with positive probability. Together with Lemma~\ref{lem:unique_inf_class}, this implies that $A$ has at most one two-ended component almost surely. On the other hand, if $A$ has exactly one two-ended component with positive probability then we have by deletion tolerance that $A$ has no two-ended components with positive probability. Since $\mathbb{Q}$ is extremal the law of $A$ is ergodic by Corollary~\ref{cor:A_ergodic}, and since the event that $A$ does not have any two-ended components is translation-invariant it must have probability $1$. \qedhere

\end{proof}

We next deduce that there is at most one infinite augmented connectivity class almost surely.
   
    \begin{lemma}\label{lem:unique_inf_class2}
   Fix $d\geq 2$, $\beta\in(0,\infty)$, let $\mathbb{Q}_\beta$ be an extremal translation-invariant augmented $\beta$-arboreal gas Gibbs measure on $\Z^d$, and
     let $(A,\Phi)$ be distributed as $\mathbb{Q}_\beta$. Then the augmented connectivity relation $\stackrel{(A,\Phi)}{\longleftrightarrow}$ has at most one infinite equivalence class almost surely.
 \end{lemma}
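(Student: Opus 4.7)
The plan is to argue by contradiction along the lines of Lemma~\ref{lem:unique_inf_class}, deriving the contradiction from Theorem~\ref{thm:ends} instead of from Proposition~\ref{prop:ends}: where the proof of Lemma~\ref{lem:unique_inf_class} glued three infinite $A$-components to form a three-ended tree, here gluing two infinite $A$-components will produce a two-ended tree, which is now forbidden by one-endedness.

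I would first establish the auxiliary observation that \emph{every infinite augmented connectivity class contains at least one infinite $A$-component}, equivalently, that if the $A$-component $C_u$ of a vertex $u$ is finite then the augmented class of $u$ is precisely $C_u$. The proof is short: choose a finite subgraph $K$ of $\Z^d$ with $C_u \subseteq V[K]$ and $C_u \cap \partial K = \emptyset$; then the connected component of $u$ in $A \cap K$ is $C_u$ and is disjoint from the vertices identified by $\Phi(K)$, so by consistency the augmented class of $u$ is exactly $C_u$, which is finite.

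Next I would assume for contradiction that $\mathbb{Q}_\beta$ gives positive probability to the existence of at least two distinct infinite augmented connectivity classes; by the observation each must contain an infinite $A$-component. A countable union over pairs of vertices together with the path-shortening and deletion-tolerance reductions used in the proof of Lemma~\ref{lem:unique_inf_class} should then produce deterministic vertices $x, y \in \Z^d$ and a finite simple path $\gamma$ from $x$ to $y$ such that, with positive probability, the following event $E$ holds: $x$ and $y$ lie in distinct infinite $A$-components $T_1$ and $T_2$ belonging to distinct infinite augmented connectivity classes, and every internal vertex of $\gamma$ is isolated in $A$. Letting $H$ be the subgraph induced by $\gamma$, the observation forces each internal vertex of $\gamma$ to have a singleton augmented class on $E$; combined with the fact that $x$ and $y$ lie in different augmented classes, this gives that $\Phi(H)$ has only singleton equivalence classes on $V[\gamma]$, and in particular $E[\gamma] \in \mathcal{F}(H,\Phi(H))$.

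To conclude, I would apply the augmented Gibbs property: since $E[\gamma] \in \mathcal{F}(H,\Phi(H))$ and $\beta > 0$, the conditional probability $\mathbb{Q}_\beta(A \cap H = E[\gamma] \mid \mathcal{G}_H) = \mathbb{P}^{\Phi(H)}_{H,\beta}(E[\gamma])$ is almost surely positive on $E$, so $E \cap \{A \cap H = E[\gamma]\}$ has positive $\mathbb{Q}_\beta$-probability. On this joint event, $T_1 \cup E[\gamma] \cup T_2$ is a connected subtree of $A$ whose two infinite branches are inherited from the one-ended trees $T_1$ and $T_2$ (one-endedness being supplied by Theorem~\ref{thm:ends}), so this $A$-tree has exactly two ends. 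This contradicts Theorem~\ref{thm:ends}, completing the proof. The main obstacle I anticipate is the path-shortening and deletion-tolerance reduction producing the event $E$, which parallels the analogous (already somewhat delicate) reduction in the proof of Lemma~\ref{lem:unique_inf_class} but additionally uses the auxiliary observation about finite $A$-components.
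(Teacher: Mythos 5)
Your proof is correct and reaches the same contradiction with Theorem~\ref{thm:ends} via finite-energy surgery, but it executes the surgery rather differently from the paper. The paper's proof is very terse: it applies merge tolerance to a finite connected $H$ intersecting both infinite classes so that $A\cap H$ becomes a spanning tree of $H/\Phi(H)$, observes that after this modification $(A,\Phi)$ has one infinite class while $(A,\Phi)\setminus H$ still has at least two, and then asserts (without further detail) that on this event some $A$-component must have more than one end. Making that last step fully rigorous requires an alternating-chain argument on the $A\setminus E[H]$-components, using (as you do) that a finite $A\setminus E[H]$-component has a singleton augmented class in $(A,\Phi)\setminus H$; the paper leaves this to the reader. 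Your approach instead explicitly manufactures the multi-ended tree: you isolate a path $\gamma$ whose interior is disjoint from all infinite structure, force $\Phi(H)$ to be trivial on $V[\gamma]$, and then insert $E[\gamma]$ by the augmented Gibbs property, which produces an honest $A$-path joining the two infinite trees $T_1$ and $T_2$. This sidesteps the implicit combinatorics at the cost of the slightly delicate path-shortening/deletion-tolerance reduction (which you correctly flag, and which parallels the proof of Lemma~\ref{lem:unique_inf_class}). Two small remarks: you do not actually need Theorem~\ref{thm:ends} to know that $T_1$ and $T_2$ are one-ended — it suffices that they are disjoint infinite trees joined by a finite path, so that the resulting tree has at least two ends, which already contradicts Theorem~\ref{thm:ends}; and the phrase ``$E\cap\{A\cap H=E[\gamma]\}$ has positive probability'' should be read as ``the event obtained from $E$ by replacing the $A\cap H$-coordinate (which is $\emptyset$ on $E$) with $E[\gamma]$ has positive probability'', since literally $E\cap\{A\cap H=E[\gamma]\}=\emptyset$ once $\gamma$ has interior vertices. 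With those readings your argument is complete and is arguably more self-contained than the paper's.
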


 \begin{proof}[Proof of Lemma~\ref{lem:unique_inf_class2}]
   Suppose for contradiction that $(A,\Phi)$ has two infinite augmented connectivity classes with positive probability. Letting $H$ be a finite subgraph of $\Z^d$ that intersects both infinite equivalence classes with positive probability, we can use the merge tolerance of $(A,\Phi)$ to deduce that, with positive probability, $(A,\Phi)$ has a single infinite augmented equivalence class but $(A,\Phi)\setminus H$ does not. On this event there must exist an infinite component of $A$ with more than one end, contradicting Theorem~\ref{thm:ends}.
 \end{proof}

To complete the proof of Proposition \ref{prop:bulk}, we show that the induced subgraph $\mathrm{Tr}(I_\infty)$ is connected a.s.
 \begin{lemma}\label{lem:connected_trace}
   Fix $d\geq 2$, $\beta\in(0,\infty)$, let $\mathbb{Q}_\beta$ be an extremal augmented $\beta$-arboreal gas Gibbs measure on $\Z^d$, let $(A,\Phi)\sim \mathbb{Q}_\beta$ and let $I_\infty$ be the set of vertices of $\Z^d$ belonging to infinite clusters of $A$. If $I_\infty$ is non-empty then the induced subgraph $\mathrm{Tr}(I_\infty)$ is connected almost surely.
 \end{lemma}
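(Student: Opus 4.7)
The plan is to argue by contradiction, combining Proposition~\ref{prop:local_resampling_general} with the augmented Gibbs property (in particular, deletion tolerance). Assume $\mathrm{Tr}(I_\infty)$ is disconnected with positive $\mathbb{Q}_\beta$-probability. Since this is a translation-invariant $\sigma(A)$-measurable event and $\pi_*\mathbb{Q}_\beta$ is ergodic by Corollary~\ref{cor:A_ergodic}, it holds almost surely on $\{I_\infty\neq\emptyset\}$. By Lemma~\ref{lem:unique_inf_class2} the unique infinite augmented equivalence class coincides with $I_\infty$, so Proposition~\ref{prop:local_resampling_general} applied at any $o\in I_\infty$ yields that conditional on $I_\infty$ and $A\setminus I_\infty$, the restriction $A\cap\mathrm{Tr}(I_\infty)$ is almost surely a Gibbs measure for the uniform maximal spanning forest on $\mathrm{Tr}(I_\infty)$. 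In particular, each connected component of $\mathrm{Tr}(I_\infty)$ is almost surely spanned by exactly one infinite $A$-tree; the contradiction will come from producing a positive-probability event on which this fails.

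To that end, I use a countable union over $\Z^d$-paths and translation invariance to fix a path $\gamma=v_0-v_1-\cdots-v_k$ of some length $k\geq 2$ realising the minimum inter-component distance of $\mathrm{Tr}(I_\infty)$, such that with positive probability the event $E$ that $v_0$ and $v_k$ lie in distinct components of $\mathrm{Tr}(I_\infty)$ with $v_1,\ldots,v_{k-1}\notin I_\infty$ occurs. Let $H$ be the subgraph supported on $\gamma$. Because adding edges to $A\cap E[H]$ cannot destroy infinite clusters, $E$ is contained in the $\mathcal{G}_H$-measurable proxy event $E'$ defined analogously in terms of infinite clusters of $A\setminus E[H]$, so $\mathbb{Q}_\beta(E')>0$. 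Using deletion tolerance (Lemma~\ref{lem:finite_energy}) and the observation that on $E'\cap\{A\cap H=\emptyset\}$ the vertices $v_0,v_k$ lie in the unique infinite augmented class of $(A,\Phi)$ (and hence must be identified through $\Phi(H)$, since the only remaining route is via boundary identifications), I conclude $\Phi(H)(v_0,v_k)=1$ almost surely on $E'$.

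Finally, on $E'$ the specific forest $F_0=\gamma\setminus\{\{v_0,v_1\}\}$ extends $\Phi(H)$: its quotient by $\Phi(H)$ is a tree with two branches emanating from the identified class $\{v_0,v_k\}$. The augmented Gibbs property therefore gives $\mathbb{Q}_\beta(E'\cap\{A\cap H=F_0\})>0$. On this event, $v_0$ remains in its original infinite cluster while $v_1,\ldots,v_{k-1}$ are absorbed into the infinite cluster containing $v_k$; the $\Z^d$-adjacencies along $\gamma$ place all of $v_0,\ldots,v_k$ into $I_\infty$ and merge the two components of $\mathrm{Tr}(I_\infty)$ into a single component spanned by two distinct infinite $A$-trees, contradicting the conclusion of the first paragraph. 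The main obstacle is the delicate passage from $E$ to the $\mathcal{G}_H$-measurable event $E'$ while ensuring both that $\Phi(H)(v_0,v_k)=1$ on $E'$ and that $F_0$ extends $\Phi(H)$; when the interior vertices $v_1,\ldots,v_{k-1}$ are not isolated in $A\setminus E[H]$ additional $\Phi(H)$-identifications could in principle obstruct $F_0$, which is resolved by enlarging $H$ to a finite ball around $\gamma$ and first applying deletion tolerance inside this ball to clean up the interfering finite clusters.
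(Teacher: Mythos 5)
Your argument diverges substantially from the paper's, which proves the lemma by a Burton--Keane-type ends argument (merge tolerance glues three components of $\mathrm{Tr}(I_\infty)$ into one, producing a component with at least three ends and contradicting Proposition~\ref{prop:ends}) and makes no use of Proposition~\ref{prop:local_resampling_general}. More importantly, your argument has a fatal gap at the step ``In particular, each connected component of $\mathrm{Tr}(I_\infty)$ is almost surely spanned by exactly one infinite $A$-tree.'' This does \emph{not} follow from Proposition~\ref{prop:local_resampling_general}: that proposition only says the conditional law of $A\cap\mathrm{Tr}(I_\infty)$ is \emph{some} Gibbs measure for the uniform maximal spanning forest on $\mathrm{Tr}(I_\infty)$, and a uniform spanning forest Gibbs measure on an infinite connected graph need not be a single tree. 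Indeed, whether it is a single tree is precisely the dimension-dependent content of Theorem~\ref{thm:spanning_tree} (false for $d\geq 5$), proved only later in the paper, and Lemma~\ref{lem:connected_trace} is stated for all $d\geq 2$. Consequently the event you construct---one component of $\mathrm{Tr}(I_\infty)$ containing two distinct infinite $A$-trees---does not contradict anything: it is perfectly consistent with the USF Gibbs property (two trees in one component are allowed), and it is consistent with the arboreal gas Gibbs property as well, since $v_0,v_1$ lie in the same augmented class so $\Phi(\{v_0,v_1\})(v_0,v_1)=1$ and the edge may lawfully be absent.

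There is also a secondary issue you flag but do not resolve: to apply the augmented Gibbs property you need $F_0$ to extend $\Phi(H)$, which requires controlling \emph{all} the identifications of $\Phi(H)$ along $\gamma$, not only the one between $v_0$ and $v_k$. The sketch of enlarging $H$ and ``cleaning up'' via deletion tolerance is plausible but is not carried out, and in particular it is not shown to be compatible with keeping $E'$ a positive-probability $\mathcal{G}_H$-measurable event. The paper's ends-based argument sidesteps both of these difficulties entirely, requiring neither the resampling proposition nor any control of the internal structure of $\Phi(H)$ beyond what merge tolerance already provides.
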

 \begin{proof}[Proof of Lemma~\ref{lem:connected_trace}]
   The proof is similar to that of Lemma \ref{lem:unique_inf_class2}, but instead of attempting to connect infinite trees, we need (and, given Lemma \ref{lem:unique_inf_class2}, can) only connect their traces.
   Suppose for contradiction that the event \[E_1=\{\text{$\mathrm{Tr}(I_\infty)$ has three or more connected components}\},\] 
   has positive probability. 
   We will connect up the traces of three infinite trees from different components of $\mathrm{Tr}$ to give a component with at least three ends.
   Because $\mathbb{Q}(E_1)>0$, there exists a finite subgraph $H$ of $\Z^d$ that intersects at least three distinct infinite clusters of $\operatorname{Tr}(I_\infty)$ with positive probability. Using merge tolerance to force all elements of $H$ to belong to the same augmented connectivity cluster, it follows that, with positive probability, $\operatorname{Tr}(I_\infty(A))$ has a single component intersecting $H$ but $\operatorname{Tr}(I_\infty(A \setminus H))$ has at least three infinite components intersecting $H$. 
On this event we must have that $\operatorname{Tr}(I_\infty(A))$ contains a component with at least three ends.
   However $\mathrm{Tr}$ is connected and translation-invariant and so this contradicts Proposition \ref{prop:ends}, and so almost surely $\mathrm{Tr}$ has at most two infinite connected components almost surely.
 \end{proof}

 We are now ready to conclude the proofs of Proposition~\ref{prop:bulk} and Theorem~\ref{thm:resampling}.

\begin{proof}[Proof of Proposition \ref{prop:bulk}] It suffices to consider the case that $\beta>0$ and $d\geq 2$, the remaining cases being trivial. We may also assume that the law of $(A,\Phi)$ is extremal, taking an extremal decomposition otherwise. Once these reductions are made, the claims of Proposition \ref{prop:bulk} are exactly those of Lemmas \ref{lem:unique_inf_class} and \ref{lem:connected_trace}.
\end{proof}

\begin{proof}[Proof of Theorem \ref{thm:resampling}]
   Let $\pr$ be a translation-invariant $\beta$-arboreal gas Gibbs measure on $\Z^d$. Lemma~\ref{lemma:translation_tail} tell us that we can find a measure $\mathbb{Q}$ which is a translation-invariant augmentation thereof. Let $(A,\Phi)\sim\mathbb{Q}$. Propositions~\ref{prop:local_resampling_general} and \ref{prop:bulk} together imply that $\mathrm{Tr}(I_\infty)$ is a.s.\ connected and that the conditional distribution of the restriction of $A$ to $\mathrm{Tr}(I_\infty)$ given $I_\infty$ and the restriction of $A$ to $\mathrm{Tr}(I_\infty^c)$ is almost surely equal to some (possibly random) Gibbs measure for the uniform spanning tree on $\mathrm{Tr}(I_\infty)$. On the other hand, since $\mathrm{Tr}(I_\infty)$ is a translation-invariant random subgraph of $\Z^d$, it is a hyperfinite unimodular random rooted graph. As such, the results of Aldous and Lyons \cite[Proposition 8.14]{AL07} imply that its free and wired uniform spanning forests coincide, and hence that it has a unique Gibbs measure for the uniform spanning tree by Lemma~\ref{lemma:infty_free_wired}. This completes the proof.
\end{proof}

We end this section by observing the following corollary of Proposition~\ref{prop:bulk} and Theorem~\ref{thm:ends}.

\begin{corollary}\label{cor:augmentation_always_wired}
Let $d\geq 1$ and $\beta \in [0,\infty)$ and let $(A,\Phi)$ be distributed as a translation-invariant augmented $\beta$-arboreal gas Gibbs measure on $\Z^d$. Then the augmented subgraph $(A,\Phi)$ is almost surely equal to the wired augmentation of $A$ as defined in \eqref{eq:wired_augmentation}.
\end{corollary}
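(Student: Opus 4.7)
The plan is to prove the two inclusions $\Phi(H)(u,v) \leq \Phi_{\mathrm{wired}}(H)(u,v)$ and $\Phi(H)(u,v) \geq \Phi_{\mathrm{wired}}(H)(u,v)$ almost surely for every fixed finite subgraph $H$ and pair $u, v \in \partial H$; a countable union over $(H, u, v)$ then yields the full claim.

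The first inclusion is deterministic and uses only the consistency axiom. If $\Phi(H)(u,v) = 1$ but $u$ lies in a finite component $C_u$ of $A \setminus E[H]$, choose a finite $K \supset H$ with $C_u \cap \partial K = \emptyset$. By consistency, $u, v$ are connected in $(A \cap K \setminus E[H])/\Phi(K)$, but the $u$-component of this graph is exactly $C_u$, since no vertex of $C_u$ lies on $\partial K$ and can therefore trigger a $\Phi(K)$-identification. Hence $v \in C_u$ and $u, v$ are in fact connected in $A \setminus E[H]$, so $\Phi_{\mathrm{wired}}(H)(u,v) = 1$.

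For the second inclusion I may assume the Gibbs augmentation is extremal. Suppose for contradiction that the event $E = \{\Phi(H)(u,v) = 0$ and $u, v$ both lie in infinite components of $A \setminus E[H]\}$ has positive probability. On $E$: Proposition~\ref{prop:bulk}(1) places $u, v$ in the unique infinite augmented class, and the first inclusion forces their infinite $A \setminus E[H]$-components $C_u, C_v$ to be disjoint. In \emph{Case I}, where $u, v$ lie in the same $A$-component $T$, the tree $T$ contains $C_u \sqcup C_v$ as disjoint infinite subtrees, giving $T$ at least two ends and contradicting Theorem~\ref{thm:ends}.

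In \emph{Case II}, where $u, v$ lie in distinct infinite $A$-components $T_u, T_v$ (both one-ended by Theorem~\ref{thm:ends}), choose a finite $H^* \supset H$ containing $V[H]$ deep in its interior, and let $u^* \in \partial H^*$ be the first vertex on the ray of $T_u$ leaving $V[H^*]$, with $v^*$ defined analogously. Because $u$ lies in the unique infinite subtree of $T_u \setminus E[H]$ (by one-endedness) and similarly for $v$, the sub-paths from $u$ to $u^*$ and from $v$ to $v^*$ along $T_u, T_v$ lie entirely in $A \setminus E[H]$, and $u^*, v^*$ lie in distinct infinite components of $A \setminus E[H^*]$. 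If $\Phi(H^*)(u^*, v^*) = 1$, then concatenating these two sub-paths with the $\Phi(H^*)$-identification of $u^*$ and $v^*$ yields, via consistency with $H \subset H^* \subset K$, a path from $u$ to $v$ in $(A \cap K \setminus E[H])/\Phi(K)$ for any $K \supset H^*$, forcing $\Phi(H)(u,v) = 1$ and contradicting $E$. Otherwise $\Phi(H^*)(u^*, v^*) = 0$, and the augmented Gibbs property on $H^*$ assigns positive conditional probability to replacing $A \cap H^*$ with a sub-forest $F^*$ consisting of the two sub-paths just described together with a simple internal path from $u$ to $v$ whose interior is routed through $V[H^*] \setminus (\partial H^* \cup V[H])$; since $\Phi(H^*)(u^*, v^*) = 0$ and this internal path avoids $\partial H^*$, $F^*$ remains acyclic in $H^*/\Phi(H^*)$, and in the resulting configuration $T_u$ and $T_v$ are merged into a single infinite $A$-component containing both of their one-ended outside tails. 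This is a two-ended infinite tree, contradicting Theorem~\ref{thm:ends} a second time.

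The main obstacle will be the Case~II construction of $F^*$: I must verify that the chosen sub-paths of $T_u, T_v$ and the internal connecting path can be routed so that no $\Phi(H^*)$-identification of $\partial H^*$-vertices accidentally closes a cycle in the quotient $H^*/\Phi(H^*)$. I anticipate handling this by exploiting the one-endedness of $T_u, T_v$ supplied by Theorem~\ref{thm:ends} to select $u^*, v^*$ as the \emph{first} $\partial H^*$-vertices reached by the respective rays, and by taking $H^*$ sufficiently thick around $V[H]$ that the internal connector has ample room to stay in the strict interior.
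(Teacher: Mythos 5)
Your overall plan (one inclusion by pure consistency, the other by resampling plus one-endedness) is correct, and the first inclusion is fine as written. The second inclusion, however, is over-engineered and has an acknowledged gap that you do need to close. The case split between Case~I and Case~II, and the two-level boxes $H$ and $H^*$, are unnecessary; a single resampling step at $H$ (after reducing to $H$ a box, which suffices by the consistency property) handles both cases uniformly. Concretely: on the bad event $E$, condition on $\mathcal{G}_H$ and use the augmented Gibbs property to replace $A\cap H$ by a single simple path $\gamma$ from $u$ to $v$ whose interior lies in $V[H]\setminus\partial H$. Since the only vertices of $\gamma$ on $\partial H$ are the endpoints and $\Phi(H)(u,v)=0$, the quotient $\gamma/\Phi(H)$ is acyclic, so $\gamma\in\mathcal{F}(H,\Phi(H))$ and the resampling is valid. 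The resampled forest contains a tree with both $C_u$ and $C_v$, which are disjoint ($\Phi(H)(u,v)=0$ forces $\Phi_{\mathrm{free}}(H)(u,v)=0$, so $u,v$ lie in distinct components of $A\setminus E[H]$) infinite pieces of that tree minus a finite edge set, so the tree has at least two ends, contradicting Theorem~\ref{thm:ends}. This covers both $T_u=T_v$ and $T_u\neq T_v$ at once.

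Two smaller points. First, in Case~II the proposed $F^*$ is not clearly acyclic: the sub-rays $P_1,P_2$ are determined by $A$ and the ``internal connector'' you add cannot in general be chosen disjoint from them, so $P_1\cup P_2\cup P_3$ may contain cycles before you even quotient by $\Phi(H^*)$. This is fixable (take a spanning tree of the union, or simply take $F^*$ to be a single $u^*$-to-$v^*$ path through the interior of $H^*$ rather than routing through $u$ and $v$), but as written it is a genuine gap, not merely a detail. Second, the disjointness of $C_u$ and $C_v$ is not a consequence of your ``first inclusion'' (which is the statement $\Phi\leq\Phi_{\mathrm{wired}}$); it follows from the fact that $\Phi_{\mathrm{free}}$ refines every augmentation $\Phi$, and the reference to Proposition~\ref{prop:bulk}(1) is not actually used anywhere in the argument.
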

 
Corollary~\ref{cor:augmentation_always_wired} implies in particular that the completions of the sigma-algebras $\mathcal{I}$ and $\mathcal{I}_S$ coincide, which implies the following corollary in conjunction with Corollary~\ref{cor:A_ergodic}.

\begin{corollary}\label{cor:full_ergodicity}
 Every extremal translation-invariant augmented $\beta$-arboreal gas Gibbs measure on $\Z^d$ is ergodic for every $d\geq 1$ and $\beta \in [0,\infty]$.
\end{corollary}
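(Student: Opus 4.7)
The plan is to show that, under the hypothesis that $\mathbb{Q}_\beta$ is extremal in $\mathcal{M}_\beta^T$, every translation-invariant event $X \in \mathcal{I}$ is equivalent modulo a $\mathbb{Q}_\beta$-null set to a tail event in $\mathcal{T}$, and then to invoke Corollary~\ref{cor:extremal_translation_invariant} to conclude that $\mathbb{Q}_\beta(X) \in \{0,1\}$. This is the scheme foreshadowed in the remark immediately preceding the corollary statement.

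The key ingredient is that the augmentation is essentially a measurable function of the underlying subgraph. For $\beta \in [0,\infty)$, Corollary~\ref{cor:augmentation_always_wired} shows $\Phi$ coincides almost surely with the wired boundary map associated to $A$, which is a measurable function of $A$. For $\beta = \infty$, the same conclusion holds by the discussion in Remark~\ref{remark:UST2}: the correct boundary map is determined a.s.\ by $A$ (though which boundary map is appropriate depends on the number of ends/components of the spanning forest). In either case, the upshot is that $\sigma(A,\Phi)$ equals the $\mathbb{Q}_\beta$-completion of $\sigma(A)$. Consequently, for every $X \in \mathcal{I}$ there exists $X' \in \mathcal{I}_S$ with $\mathbb{Q}_\beta(X \Delta X') = 0$; that is, $\mathcal{I}$ lies in the $\mathbb{Q}_\beta$-completion of $\mathcal{I}_S$.

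Combining this with Lemma~\ref{lemma:translation_tail}, which places $\mathcal{I}_S$ in the $\mathbb{Q}_\beta$-completion of the tail $\sigma$-algebra $\mathcal{T}$, yields that $\mathcal{I}$ is contained in the $\mathbb{Q}_\beta$-completion of $\mathcal{T}$. Any $X \in \mathcal{I}$ then belongs to the completions of both $\mathcal{T}$ and $\mathcal{I}$, so Corollary~\ref{cor:extremal_translation_invariant} gives $\mathbb{Q}_\beta(X) \in \{0,1\}$, which is precisely the definition of ergodicity for $\mathbb{Q}_\beta$.

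The substantive work is therefore entirely absorbed by the preceding results; the only point requiring attention is verifying that Corollary~\ref{cor:augmentation_always_wired} (or its analogue) truly holds in the $\beta = \infty$ regime uniformly across different geometries of $\Z^d$. This is the step I would expect to need the most care in a fully rigorous write-up, since Remark~\ref{remark:UST2} indicates that the appropriate augmentation can be either free or wired depending on the dimension; however, in all these cases the boundary map is still a deterministic function of $A$ (whether via $\Phi_{\mathrm{free}}^A$ or $\Phi_{\mathrm{wired}}^A$), so the reduction $\sigma(\Phi) \subseteq \overline{\sigma(A)}$ still holds, and the argument goes through without modification.
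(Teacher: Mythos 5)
Your proposal matches the paper's argument: the paper likewise combines Corollary~\ref{cor:augmentation_always_wired} (and the $\beta=\infty$ discussion in Remark~\ref{remark:UST2}) to conclude that the completions of $\mathcal{I}$ and $\mathcal{I}_S$ coincide, and then invokes Corollary~\ref{cor:A_ergodic}, which itself rests on Lemma~\ref{lemma:translation_tail} and Corollary~\ref{cor:extremal_translation_invariant}. You simply unpack that intermediate corollary and apply Corollary~\ref{cor:extremal_translation_invariant} directly, which is the same chain of reasoning.
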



\section{Random walk intersections in unimodular random graphs}\label{section:RW}
In this section we prove Theorem \ref{thm:spanning_tree}, which states that  uniform spanning trees of unimodular random rooted subgraphs of $\Z^d$ are connected almost surely when  $d\leq 4$; by the results of Benjammini, Lyons, Peres and Schramm \cite{MR1825141,MR1997212} this is equivalent to the statement that two independent random walks on such a graph intersect infinitely often almost surely. This property is known as the \textbf{infinite intersection property}.
The proof is a combination of two results. First, in Section~\ref{subsec:intersection_criterion}, we establish, for general unimodular random rooted graphs whose degree has finite second moment, that two random walks intersect infinitely often almost surely if and only if their \emph{expected} number of intersections \emph{conditional on the rooted graph and one of the two walks} is infinite almost surely. Then, in Section~\ref{subsec:intersections_Zd}, we show that this condition is satisfied for random walks on unimodular subgraphs of $\Z^d$ for $d\leq 4$ using the theory of \emph{Markov-type inequalities}.

\medskip

Before getting started with the proof, we quickly review some relevant definitions and state a generalization of Theorem~\ref{thm:spanning_tree}.

\medskip

\noindent \textbf{Unimodular random rooted graphs.}
 A \textbf{rooted graph} is a pair $(G,\rho)$ where $G$ is a connected, locally finite graph and $\rho$ is a distinguished vertex of $G$ known as the root vertex; an isomorphism of graphs is an isomorphism of rooted graphs if it preserves the root. We define $\mathcal{G}_\bullet$ to be the space of isomorphism classes of rooted graphs, 
 which is equipped with the Borel sigma algebra induced by the 
  \emph{local topology} \cite{Curien,AL07}, in which two elements of $\mathcal{G}_\bullet$ are considered to be close if there exist large graph-distance balls around their root vertices which admit a graph isomorphism that preserves the root. The details of this construction are not important to us and can be found in e.g.\ \cite[Section 1.2]{Curien}. Similarly, we also have the space $\mathcal{G}_{\bullet\bullet}$ of (isomorphism classes of) doubly-rooted graphs $(G,\rho_1,\rho_2)$, with an ordered pair of distinguished root vertices $\rho_1,\rho_2\in V[G]$.
We say that a random variable $(G,\rho)$ taking values in $\mathcal{G}_\bullet$ is \textbf{unimodular} if it satisfies the \textbf{mass-transport principle}, meaning that
\[
\E{\sum_{v\in V[G]}F(G,\rho,v)}=\E{\sum_{v\in V[G]}F(G,v,\rho)}
\]
for every Borel measurable function $F:\mathcal{G}_{\bullet\bullet}\rightarrow[0,\infty)$.

Next we define the space of \textit{rooted subgraphs of $\Z^d$}; this definition is not standard.
 For any connected graph $G$ and $d\geq 1$, we say the function $\phi:V[G]\times V[G]\rightarrow \Z^d$ is an \textbf{embedding} of $G$ into $\Z^d$ if  $\phi(u,w)=\phi(u,v)+\phi(v,w)$ for every $u,v,w\in\Z^d$ (i.e.\ if $\phi$ is an additive cocyle), $\phi(u,w) = 0$ if and only if $u = w$, and $\norm{\phi(u,w)}_\infty = 1$ if $\{u,w\}\in E[G]$. A \textbf{rooted subgraph of $\Z^d$} is then a tuple $(G,\phi,\rho)$, where $G$, $\rho$ are as before, and $\phi$ is an embedding of $G$ into $\Z^d$. We denote the space of isomorphism classes of rooted subgraphs of $\Z^d$ by $\mathcal{S}_\bullet(\Z^d)$, which we endow with the Borel sigma algebra corresponding to the local topology, where for two elements to be close, the embeddings now also have to coincide in a large ball. Defining the space of doubly-rooted subgraphs $\mathcal{S}_{\bullet\bullet}(\Z^d)$ similarly, we say that a random tuple $(G,\phi,\rho)$ is unimodular if 
\[
\E{\sum_{v\in V[G]}F(G,\phi,\rho,v)}=\E{\sum_{v\in V[G]}F(G,\phi,v,\rho)}
\]
for every Borel measurable function $F:\mathcal{S}_{\bullet\bullet}(\Z^d)\rightarrow[0,\infty)$.

\begin{lemma}
If $\omega$ is a translation-invariant random subgraph of $\Z^d$, $K_0$ denotes the cluster of the origin in $\omega$, and we define a cocyle $\phi:V[K_0]\times V[K_0]\to \Z^d$ by $\phi(u,v)=u-v$, then $(K_0,\phi,0)$ is a unimodular random rooted subgraph of $\Z^d$.
\end{lemma}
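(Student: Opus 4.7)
The plan is to reduce the mass-transport principle for $(K_0, \phi, 0)$ to the elementary ``diagonal-invariance'' mass-transport principle on $\Z^d$ itself, which follows immediately from translation invariance of $\omega$. Given a Borel measurable $F : \mathcal{S}_{\bullet\bullet}(\Z^d) \to [0,\infty)$, I would introduce the transport function
\[
g(x,y,\omega) \;=\; F\bigl(K_x(\omega),\,\phi_x,\,x,\,y\bigr)\,\mathbbm{1}\bigl[y \in V[K_x(\omega)]\bigr],
\]
where $K_x(\omega)$ denotes the cluster of $x$ in $\omega$ and $\phi_x(u,v) = u - v$ is the canonical cocycle on $V[K_x]\times V[K_x]$. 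The goal is to run the $\Z^d$ mass-transport principle on $g$ and read off the desired identity.

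The central task is to verify that $g$ is diagonally invariant in the sense that $g(x+z,y+z,\tau_z\omega) = g(x,y,\omega)$ for every $z\in \Z^d$. This rests on two observations. First, because $\tau_z$ acts on edges by the shift, $K_{x+z}(\tau_z\omega) = K_x(\omega) + z$, and an element $y + z$ lies in $K_{x+z}(\tau_z \omega)$ iff $y$ lies in $K_x(\omega)$. Second, the map $w \mapsto w - z$ is a graph isomorphism between $K_{x+z}(\tau_z\omega)$ and $K_x(\omega)$ that sends the ordered pair of roots $(x+z,y+z)$ to $(x,y)$ and intertwines the cocycles, since $\phi_x(u-z,v-z) = (u-z)-(v-z) = u-v = \phi_{x+z}(u,v)$. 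Hence the doubly-rooted subgraphs agree as elements of $\mathcal{S}_{\bullet\bullet}(\Z^d)$, so $F$ takes the same value on both.

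With diagonal invariance in hand, translation invariance of $\omega$ gives
\[
\sum_{y \in \Z^d}\E{g(0,y,\omega)} \;=\; \sum_{y \in \Z^d}\E{g(y,0,\omega)}
\]
via the one-line argument $\E{g(y,0,\omega)} = \E{g(0,-y,\tau_{-y}\omega)} = \E{g(0,-y,\omega)}$. The left-hand side unfolds to $\E{\sum_{y \in V[K_0]} F(K_0,\phi_0,0,y)}$. For the right-hand side, I would note that $0 \in V[K_y]$ iff $y \in V[K_0]$, and on this event $K_y = K_0$ and $\phi_y = \phi_0$, so the sum simplifies to $\E{\sum_{y \in V[K_0]} F(K_0,\phi_0,y,0)}$. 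Comparing the two expressions is exactly the unimodularity condition for $(K_0,\phi,0)$.

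I do not expect any serious obstacle: once the diagonal-invariance bookkeeping is done carefully (in particular the shift-equivariance of ``cluster of $x$'' and the fact that the cocycle $u\mapsto u - v$ is translation invariant), the result drops out. Two minor points I would address: measurability of $g$ in $\omega$ (immediate from the construction of the local topology on $\mathcal{S}_\bullet(\Z^d)$), and the convention that $\omega$ is a random edge subset with $V[\omega] = V[\Z^d]$, so that $0 \in V[K_0]$ by default and $K_0$ is always a well-defined rooted subgraph containing the origin.
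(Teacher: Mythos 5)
Your proof is correct and follows essentially the same route as the paper: you introduce the transport function $g(x,y,\omega)=F(K_x,\phi_x,x,y)\mathbbm{1}[y\in V[K_x]]$, verify its diagonal shift-invariance, and reduce to the elementary mass-transport principle on $\Z^d$ — exactly the paper's argument, which sets $F'(u,v)=\mathbb{E}[F(K_u,\phi,u,v)]$ and observes $F'(u+x,v+x)=F'(u,v)$. Your write-up simply spells out the bookkeeping (the shift-equivariance of ``cluster of $x$'', the intertwining of the cocycles, the indicator, and the re-indexing) that the paper leaves implicit.
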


\begin{proof}
The translation-invariance of the model implies that if $F:\mathcal{S}_{\bullet\bullet}(\Z^d)\rightarrow[0,\infty)$ is measurable then $F'(u,v)=\mathbb{E}[F(K_u,\phi,u,v)]$ satisfies $F'(u+x,v+x)=F'(u,v)$ for every $u,v,x\in \Z^d$, and the claim follows from the usual mass transport principle for $\Z^d$.
\end{proof}

Since unimodularity is preserved by conditioning on re-rooting invariant events, it follows that $(K_0,\phi,0)$ remains unimodular when we condition on it having size $n$ for any $n \in \N\cup\{\infty\}$ for which the relevant probability is positive. As such, Theorem~\ref{thm:spanning_tree} follows from the following more general theorem. (Examples of unimodular random rooted subgraphs of $\Z^d$ that do \emph{not} arise as a cluster in a translation-invariant model include the incipient infinite percolation cluster and the trace of a doubly-infinite random walk.)

 \begin{theorem} \label{thm:spanning_tree_general}
Let $d\leq 4$ and let $(G,\phi,\rho)$ be a unimodular random rooted subgraph of $\Z^d$. Then $G$ has the infinite intersection property almost surely. 
 \end{theorem}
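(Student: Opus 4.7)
The plan is to follow the two-step program outlined at the beginning of Section~\ref{section:RW}. First, I would prove a general intersection criterion on unimodular random rooted graphs: if $(G, \rho)$ is a unimodular random rooted graph with $\E{\deg(\rho)^2} < \infty$ and $X, Y$ are conditionally independent simple random walks from $\rho$, then the random variable $I := \#\{(m,n) : X_m = Y_n\}$ is infinite almost surely if and only if $\E{I \mid G, \rho, X} = \infty$ almost surely. Second, I would verify the right-hand condition for unimodular random rooted subgraphs of $\Z^d$ with $d \leq 4$ using the Markov type~$2$ inequality for $\Z^d$. Theorem~\ref{thm:spanning_tree_general} then follows immediately, noting that the unimodularity assumption plus the uniform boundedness of the degree of $\rho$ (since $G \subseteq \Z^d$) ensures the criterion applies.

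For the criterion, the forward direction is straightforward: if $I = \infty$ almost surely then, conditional on $(G, \rho, X)$, the event $\{I = \infty\}$ has full measure almost surely, so the conditional expectation is infinite almost surely. For the converse I would fix truncations $I_N := \#\{(m,n) : m, n \leq N,\ X_m = Y_n\}$ and apply a conditional Paley--Zygmund inequality. The first moment $\E{I_N \mid G, \rho, X}$ tends to infinity almost surely by hypothesis; the difficult step is bounding the conditional second moment $\E{I_N^2 \mid G, \rho, X}$ by a deterministic multiple of $\E{I_N \mid G, \rho, X}^2$. The natural route is to expand $I_N^2$ as a sum over quadruples $(m, m', n, n')$, use the Markov property of $Y$ to rewrite the joint visit probabilities as products of heat kernels evaluated along the trajectory of $X$, and then invoke the mass-transport principle, rerooted along $X$, together with the finite-second-moment assumption on $\deg(\rho)$ to recognise the resulting expression as comparable to the square of the first moment.

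For the second step, the key input is the Markov type~$2$ property of $\ell^2$. Since the embedding $\phi : G \to \Z^d \hookrightarrow \ell^2$ sends edges to unit vectors in $\ell^\infty$ and hence to vectors of bounded $\ell^2$-norm, Markov type~$2$ transfers to the random walk on $G$ and yields the diffusive bound $\E{\|\phi(\rho, X_n)\|_2^2 \mid G, \phi, \rho} \leq Cn$ for a constant $C$ depending only on $d$. Markov's inequality then shows that $X_n$ lies in a $\Z^d$-ball of radius $O(\sqrt{n})$, and hence in a set of cardinality $O(n^{d/2})$, with probability at least $1/2$. Applying Cauchy--Schwarz to $p_n(\rho, \cdot)$ against the indicator of this ball produces the heat-kernel estimate $\sum_v p_m(\rho, v) p_n(\rho, v) \gtrsim (m+n)^{-d/2}$. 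Summing over $(m,n)$ yields the divergence of the unconditional expected number of intersections exactly when $d \leq 4$. To upgrade this in-expectation statement to almost-sure divergence of the conditional expectation $\E{I \mid G, \phi, \rho, X}$, I would decompose the unimodular law into its extremal components and apply a further mass-transport argument, transporting Green's-function weights along the trajectory of $X$, in order to convert the in-expectation estimate into an almost-sure bound satisfying the hypothesis of the criterion.

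The principal obstacle I anticipate is the conditional second-moment estimate in the first step: after expansion and Markov-property manipulation, the product structure of $\E{I_N^2 \mid G, \rho, X}$ entangles the random environment with the walk trajectory, and recovering a clean comparison with $\E{I_N \mid G, \rho, X}^2$ will require the mass-transport reduction to exploit cancellation between the degree factors appearing from reversibility of the walk and those introduced by unimodular rerooting along $X$; this is the reason the $\E{\deg(\rho)^2} < \infty$ hypothesis is natural. A secondary difficulty is the upgrade from in-expectation to almost-sure conditional divergence in the second step, which I expect to be tractable via an ergodic decomposition of the space of unimodular random rooted graphs but requires care in the absence of ergodicity.
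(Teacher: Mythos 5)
Your two-step outline matches the paper's architecture, but the execution of the first step is genuinely different and the second step has a gap the paper's proof is carefully constructed to avoid.

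For the intersection criterion, the paper does not use a second-moment/Paley--Zygmund argument at all. Instead it proves a time-reversal identity (Lemma~\ref{lem:lex_max_identity}) for the probability that $(n,m)$ is the \emph{lexicographically maximal} intersection time, then applies the mass-transport principle to this identity, multiplies by $\deg(o)^2$, and sums over $(n,m)$: since the events ``$\operatorname{lex-max}=(n,m)$'' are disjoint, the sum of the left sides is at most $\mathbb{E}[\deg(o)^2]$, while the sum of the right sides contains $\mathbb{E}[\#\{i,j\leq 0 : X_i=Y_j\}\mid (G,o),Y]$ as a factor. This forces the probability of no future intersection to vanish, and iterating the Markov property completes the argument. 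This cleverly converts a first-moment hypothesis into an almost-sure conclusion with no second-moment input. Your Paley--Zygmund route, by contrast, requires a \emph{pointwise} conditional bound $\E{I_N^2\mid G,\rho,X}\leq C\,\E{I_N\mid G,\rho,X}^2$ with a deterministic $C$. This is the crux, and I don't see how to get it: after expanding the quadruple sum and using the Markov property of $Y$, the resulting expression $\sum_{m,m',n\leq n'} p_n(\rho,X_m)p_{n'-n}(X_m,X_{m'})$ depends on the realized trajectory of $X$ in a way that is not controlled in quenched form. You propose to invoke the mass-transport principle ``rerooted along $X$,'' but mass-transport produces identities and inequalities \emph{in expectation} over the unimodular law, not almost-sure pointwise bounds along a fixed trajectory; it will not supply the deterministic constant Paley--Zygmund needs. (Even in $\Z^4$ itself, where the annealed first and second moments are both of order $(\log N)^{O(1)}$, the quenched second moment conditional on $X$ is not obviously comparable to the square of the quenched first moment for every trajectory.)

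For the verification step, the overall tools (Markov type $2$, Markov's inequality, a pigeonhole/Cauchy--Schwarz comparison in a diffusive ball) are the same, but you propose to first establish that the \emph{annealed} expected number of intersections diverges, and then ``upgrade'' to almost-sure divergence of the conditional expectation $\E{I\mid G,\phi,\rho,Y}$ via an ergodic decomposition plus a further mass-transport argument. This is not a routine upgrade: $\E{I}=\infty$ does not imply $\E{I\mid\mathcal{F}}=\infty$ a.s.\ for a sub-$\sigma$-algebra $\mathcal{F}$, and neither ergodicity nor mass-transport obviously closes this gap (ergodicity would let you average a translation-invariant quantity, but the issue here is concentration of a conditional expectation, not mixing). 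The paper instead works directly with the occupation measure $L_A(v)=\sum_{n\in A}\mathbbm{1}(Y_n=v)$ and the on-graph Green's function $G_A(v)$, writing $\E{I\mid (G,o),Y}=\sum_v G_{\Z_{\geq 0}}(v)L_{\Z_{\geq 0}}(v)$, splitting into dyadic time/space scales, and using a \emph{maximal} Markov-type inequality (not just a one-time version) plus two applications of Markov's inequality to show that with probability at least $1-2\eps$ each scale contributes a constant; Fatou's lemma and sending $\eps\to 0$ then gives divergence almost surely with no ergodic decomposition required. Note also that the maximal form of the Markov-type inequality is essential for this: you need the entire trajectory $(Y_i)_{i\leq t_n}$ to stay in the box $\Lambda_n$, not just $Y_{t_n}$, to control the occupation measure, and the non-maximal $\ell^2$ Markov type you invoke would not deliver this.
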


 Equivalently, if $(G,\phi,\rho)$ is a unimodular random rooted subgraph of $\Z^d$ then the uniform spanning forest of $G$ is connected almost surely on the event that $G$ is infinite (the uniform spanning forest of $G$ being a.s.\ well-defined independently of boundary conditions by the results of \cite{AL07} as discussed in the proof of Theorem~\ref{thm:resampling}).

\subsection{A criterion for the infinite intersection property}
\label{subsec:intersection_criterion}

The goal of this subsection is to prove the following general proposition concerning intersections of random walks on general unimodular random rooted graphs.

 \begin{proposition} \label{prop:EToP}
 	Let $(G,o)$ be a unimodular random rooted graph which is almost surely connected and suppose that the second moment of the degree of the root is finite, i.e.\ $\mathbb{E}[\deg(o)^2]<\infty$. Let $X$ and $Y$ are two random walks on $G$, both started at $o$, that are conditionally independent given $(G,o)$. If
 	\begin{equation*}
    \mathbb{E}\left[\#\{i,j\geq0:X_i=Y_j\}\mid (G,o),Y \right]=\infty \qquad \text{ almost surely}
 	\end{equation*} 
   then $G$ has the infinite intersection property almost surely.
 \end{proposition}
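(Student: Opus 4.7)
The plan is to combine the conditional second-moment method (Paley--Zygmund) with a re-rooting argument based on the mass-transport principle. The hypothesis provides an infinite first moment for intersections conditional on $(G,o)$ and $Y$, and we need to upgrade this to infinitely many intersections almost surely. I expect this will proceed in two stages: first, establish that at least one intersection occurs with positive conditional probability; then iterate this using unimodular re-rooting together with the strong Markov property.

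For the first stage, let $\mathcal{Y} = \sigma((G,o), Y)$ and write $I_T = \#\{(i,j): i \leq T,\, X_i = Y_j\}$ for the intersection count truncated in $X$, so $\mathbb{E}[I_T \mid \mathcal{Y}] \uparrow \infty$ almost surely. Writing $I_T$ in terms of random walk local times $L^X_T(v), L^Y(v)$ as $I_T = \sum_v L^X_T(v) L^Y(v)$, and using the standard first-passage identity
\[
\mathbb{E}[L^X(v)L^X(w) \mid (G,o)] = G(o,v)G(v,w) + G(o,w)G(w,v) - G(o,v)\mathbbm{1}(v{=}w),
\]
the goal is to prove a conditional second-moment inequality of the form $\mathbb{E}[I_T^2\mid\mathcal{Y}] \leq C(G,o,Y)\cdot \mathbb{E}[I_T\mid\mathcal{Y}]^2$ with $C$ almost surely finite and independent of $T$. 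The plan for this is to compare the pair sum $\sum_{v,w} G(o,v)G(v,w) L^Y(v) L^Y(w)$ with $\bigl(\sum_v G(o,v) L^Y(v)\bigr)^2$ by applying the mass-transport principle to re-root at the ``bridge'' vertex $v$, exploiting unimodularity of $(G,o)$; the finite second moment $\mathbb{E}[\deg(o)^2] < \infty$ is expected to appear precisely when re-rooting introduces $\deg$-weights that must be integrable. Paley--Zygmund then yields $\mathbb{P}(I_T \geq \tfrac12\mathbb{E}[I_T\mid\mathcal{Y}] \mid \mathcal{Y}) \geq (4C)^{-1}$, and sending $T \to \infty$ gives $\mathbb{P}(I = \infty \mid \mathcal{Y}) > 0$ almost surely.

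For the second stage I would iterate using the strong Markov property together with unimodularity. If $\tau$ is the first intersection time, then conditional on $\tau<\infty$ the continuation $X_{\tau+n}$ is a random walk started at $X_\tau$, and by the mass-transport principle the pair $(G, X_\tau)$ is a unimodular random rooted graph whose law is absolutely continuous with that of $(G,o)$ (after degree-biasing). The hypothesis of the proposition therefore transfers to the re-rooted system, so applying the first stage from the new root gives a second intersection with positive probability, and iterating produces $\mathbb{P}(I \geq k \mid \mathcal{Y}) \geq 1-(1-c)^k$ for every $k$, hence $I = \infty$ almost surely. The main obstacle is establishing the second-moment bound with an almost-surely finite random constant $C$: the re-rooting step in the mass transport naturally produces terms weighted by $\deg(v)$, and the hypothesis $\mathbb{E}[\deg(o)^2] < \infty$ is what makes these weights controllable in the relevant sums. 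Once the positive-probability statement of the first stage is in hand, the iteration in the second stage should be essentially formal.
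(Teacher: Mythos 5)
Your overall strategy (conditional second moment $\Rightarrow$ positive probability of an intersection $\Rightarrow$ iterate via unimodularity) is a natural one, but it is genuinely different from the paper's argument, and I see a real gap in the first stage. The paper does not compute a second moment at all. Instead it works with \emph{two-sided} random walks, decomposes according to the lexicographic-maximum intersection time $(n,m)$, and proves a time-reversal identity (Lemma~\ref{lem:lex_max_identity}) relating $\mathbf{P}_{o,o}^G(\operatorname{lex-max}\{(i,j):X_i=Y_j\}=(n,m))$ to a sum over $v$ of the probability, started from $(v,v)$, that $X_{-n}=Y_{-m}=o$ and the two forward trajectories never meet again. Applying the mass-transport principle to this identity and summing over $n,m$ gives $\mathbb{E}\big[\#\{i,j\leq 0: X_i=Y_j\}\,\mathbbm{1}(\text{forward non-intersection})\big]\leq\mathbb{E}[\deg(o)^2]<\infty$, which is incompatible with the hypothesis that the conditional expected number of intersections is infinite unless the forward non-intersection probability is $0$. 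The iteration is then done by fixing \emph{deterministic} times $(n,m)$ and invoking the Markov property together with the ``everything shows at the root'' principle from Aldous--Lyons/Curien to propagate the conclusion to arbitrary starting vertices, avoiding any re-rooting at a random stopping time.

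The gap in your first stage is the claimed second-moment bound $\mathbb{E}[I_T^2\mid\mathcal{Y}]\leq C(G,o,Y)\,\mathbb{E}[I_T\mid\mathcal{Y}]^2$ with $C$ a.s.\ finite and uniform in $T$. This would require, essentially, that $\sum_{v,w}G_T(o,v)G_T(v,w)L^Y(v)L^Y(w)\lesssim\big(\sum_v G_T(o,v)L^Y(v)\big)^2$ pointwise on the probability space, which amounts to a pointwise comparison $G_T(v,w)\lesssim G_T(o,w)$ for $v,w$ in the support of $L^Y$. This is false on general graphs, and even on a fixed nice graph like $\Z^4$ the ratio of the second moment to the square of the first moment is controlled only after careful, lattice-specific analysis. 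More importantly, the mass-transport principle is an equality of \emph{expectations} over the random rooted graph; it cannot by itself deliver an a.s.\ pointwise inequality of the kind you need for Paley--Zygmund conditional on $(G,o,Y)$. The appearance of $\mathbb{E}[\deg(o)^2]<\infty$ after ``re-rooting'' that you anticipate does show up in the paper's argument -- but as an \emph{expectation} bound, not as a pointwise constant, which is exactly why the paper's route avoids a second-moment/Paley--Zygmund argument entirely.

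Your second stage also has a softer issue worth flagging: $(G,X_\tau)$ for a stopping time $\tau$ that depends on both walks is not obviously unimodular or absolutely continuous with respect to $(G,o)$ (even after degree-biasing), because stopping-time re-rooting can introduce size biases that are hard to control. The paper sidesteps this by conditioning on \emph{fixed} $(n,m)$ and using the deterministic-time Markov property plus the ``everything shows at the root'' principle, which is the standard and safer way to propagate a.s.\ statements from the root to arbitrary vertices in a unimodular random graph.
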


The proof of this proposition is of a similar flavour to those of \cite{MR3399814,MR4364738}, which involve \emph{collisions} (where the two walks are at the same location \emph{at the same time}) rather than \emph{intersections} (where the two walks are at the same location but not necessarily at the same time).

\medskip

    We begin by establishing a lemma concerning random walks on \emph{deterministic} graphs.
    It will be convenient to work with two-sided rather than one-sided random walks. Given a connected, locally finite graph $G$ and two vertices $u,v\in V[G]$, we write $\mathbf{P}_{u,v}^{G}$ for the joint law of a pair of independent doubly-infinite random walks $(X_n)_{n\in \Z}$ and $(Y_n)_{n\in \Z}$ started at $u$ and $v$ respectively: Concretely, we let $X^+$, $X^-$, $Y^-$, and $Y^+$ be independent random walks on $G$, where $X^+$ and $X^-$ are started at $u$ and $Y^+$ and $Y^-$ are started at $v$, and define the two-sided random walks $(X_n)_{n\in \Z}$ and $(Y_n)_{n\in Z}$ by
	\[
	X_n = \begin{cases}
		X^+_n & n\geq 0\\
		X^-_{-n} & n\leq 0
	\end{cases} \qquad \text{ and } Y_n = \begin{cases}
        Y^+_n & n\geq 0\\
        Y^-_{-n} & n\leq 0.
    \end{cases}
	\]
    Given a subset $A$ of $\Z\times \Z$, we write $\operatorname{lex-max} A$ for the lexicographical maximum of $A$ when this maximum is well-defined.  
    The following lemma may be thought of as a time-reversal identity for the probabilities of these events.

\begin{lemma}
\label{lem:lex_max_identity}
Let $G=(V,E)$ be a transient, connected, locally finite graph, and let $o$ be a vertex of $G$. Then
    \begin{multline}
        \mathbf{P}_{o,o}^{G}\bigl(\operatorname{lex-max} \{(i,j):X_i=Y_j\}=(n,m)\bigr)
        \\=\sum_{v\in V}\frac{\deg(v)^2}{\deg(o)^2}\mathbf{P}_{v,v}^{G}(X_{-n}=Y_{-m}=o,\{X_i\}_{i\geq 0}\cap\{Y_j\}_{j>0}=\emptyset,\{X_i\}_{i> 0}\cap\{Y_j\}_{j\leq 0}=\emptyset)
    \end{multline}
    for every $n,m \geq 0$.
\end{lemma}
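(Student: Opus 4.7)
The plan is to establish this identity by applying time-reversal to both walks simultaneously, so that the factor $\deg(v)^2/\deg(o)^2$ arises as one factor of $\deg(v)/\deg(o)$ from each of $X$ and $Y$. The underlying tool is the detailed-balance identity $\deg(\gamma_0)\, P^G_{\gamma_0}(\text{traverse } \gamma) = \deg(\gamma_k)\, P^G_{\gamma_k}(\text{traverse } \gamma^R)$, valid for any finite nearest-neighbour path $\gamma = (\gamma_0,\ldots,\gamma_k)$ in $G$ with reverse $\gamma^R$. Writing each doubly-infinite walk under $\mathbf{P}_{u,u}^G$ as the concatenation of two conditionally independent forward and backward one-sided walks from $u$, one sees that the probability of any finite truncation is proportional to a product of reciprocal vertex degrees, together with a $1/\deg(u)^2$ factor accounting for the shared root of the two halves.

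First I would fix $v\in V$ and decompose $\mathbf{P}_{o,o}^G(\operatorname{lex-max}\{(i,j):X_i=Y_j\}=(n,m))$ as a sum over the meeting vertex $v = X_n = Y_m$. For each such $v$, define the path-space bijection $\Phi_v(X,Y)=(X',Y')$ by $X'_i = X_{n-i}$ and $Y'_j = Y_{m-j}$, which reflects each walk about its meeting time. Applying the detailed-balance identity to the forward and backward branches of $X$ joined at $v$ (and analogously for $Y$), one computes that, restricted to configurations with $X_n=Y_m=v$, the density of $\mathbf{P}_{o,o}^G$ relative to the pushforward of $\mathbf{P}_{v,v}^G$ under $\Phi_v^{-1}$ is exactly $\deg(v)^2/\deg(o)^2$, with the two factors of $\deg(v)/\deg(o)$ corresponding to the two walks being reversed independently.

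It then remains to identify the image of the lex-max event. Under $\Phi_v$, the intersection set of $(X,Y)$ is reflected to that of $(X',Y')$ via $(i,j)\mapsto (n-i,m-j)$, so the lex-max condition translates to ``no intersection $(k,l)$ of $(X',Y')$ with $k<0$, or with $k=0$ and $l<0$'', while the constraints $X_0=Y_0=o$ become $X'_n=Y'_m=o$. To match the asymmetric form on the right-hand side of the lemma, I would finally apply the simultaneous internal time-reversal $\tilde X_i=X_{-i}$, $\tilde Y_j=Y_{-j}$, which preserves $\mathbf{P}_{v,v}^G$ by reversibility of the two-sided walk from $v$; this sends $X'_n = Y'_m = o$ to $\tilde X_{-n}=\tilde Y_{-m}=o$ and converts ``no intersection lexicographically below $(0,0)$'' into the stated condition that $\{X_i\}_{i\geq 0}\cap\{Y_j\}_{j>0}$ and $\{X_i\}_{i>0}\cap\{Y_j\}_{j\leq 0}$ are empty. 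Summing over $v$ yields the claimed identity.

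The hard part will be the bookkeeping. One should make the density calculation rigorous by first working with truncations of each walk to a finite interval $[-N,N]$ and then passing to the limit $N\to\infty$ via monotone convergence on both sides of the identity; one must then carefully track how the two successive reversals act on both the time-indices and the forbidden region of the intersection lattice, and verify that a single reflection about $(n,m)$ combined with a single central reflection indeed produces precisely the asymmetric exclusion appearing in the statement (and not, say, its dual across $(0,0)$).
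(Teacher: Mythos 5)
Your proposal is correct and takes essentially the same approach as the paper: decompose over the meeting vertex $v=X_n=Y_m$ and apply time-reversal (via detailed balance) to each walk to pick up one factor of $\deg(v)/\deg(o)$ per walk. The paper carries this out as an explicit chain of equalities, reversing the segment $Y|_{[0,m]}$ and then $X|_{[0,n]}$ and re-indexing via the Markov property, whereas you package the same reversals as the two path-space bijections $\Phi_v$ and the central reflection — a cleaner framing but the same underlying argument, including the identification of the image of the lex-max event.
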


 (Here, the event ``$\operatorname{lex-max} \{(i,j):X_i=Y_j\}=(n,m)$'' implicitly includes the condition that the lexicographical maximum is well-defined.)

    \begin{proof}[Proof of Lemma~\ref{lem:lex_max_identity}]
Fix $n,m\geq 0$ and write
   \[
    B_{n,m}:=\big\{\operatorname{lex-max} \{(i,j):X_i=Y_j\}=(n,m)\big\}=\big\{X_n=Y_m,\{X_i\}_{i\geq n}\cap\{Y_j\}_{j>m}=\emptyset,\{X_i\}_{i> n}\cap\{Y_j\}_{j\leq m}=\emptyset\big\}.
    \]
 Decomposing according to the value of $X_n=Y_m$ yields that
	\begin{equation}\label{eq:sum_decomp}
		\mathbf{P}_{o,o}^{G}(B_{n,m})=\sum_{v\in V}\mathbf{P}_{o,o}^{G}(X_n=Y_m=v,\{X_i\}_{i\geq n}\cap\{Y_j\}_{j>m}=\emptyset,\{X_i\}_{i> n}\cap\{Y_j\}_{j\leq m}=\emptyset\big).
	\end{equation}
	Let $\mathbf{P}^G_o$ denote the marginal law of $(Y_n)_{n\in \Z}$ and abbreviate $\deg(v)=\mathrm{d}(v)$ for each vertex $v$ of $G$. For each $v\in V[G]$ and each doubly-infinite simple  $(x_n)_{n\in\Z}$ path in $G$ with
	$x_0=o$ and $x_n=v$
    we can compute that
		\begin{align*}
		&\mathbf{P}_{o}^G\big(Y_m=v,\, \{x_i\}_{i\geq n}\cap\{Y_j\}_{j>m}=\emptyset,\, \{x_i\}_{i> n}\cap\{Y_j\}_{j\leq m}=\emptyset\big)
        \\
		&=\mathbf{P}_{o}^G\big(\{x_i\}_{i> n}\cap \{Y_j\}_{j<0}=\emptyset \big)\mathbf{P}_{o}^G\big(\{x_i\}_{i> n}\cap\{Y_j\}_{0\leq j\leq  m}=\emptyset,\,Y_m=v\big)\mathbf{P}_{v}^G\big(\{x_i\}_{i\geq n}\cap\{Y_j\}_{j>0}=\emptyset\big)
    \\
		&=\mathbf{P}_{o}^G\big(\{x_i\}_{i> n}\cap \{Y_j\}_{j<0}=\emptyset \big)\left(\frac{\mathrm{d}(v)}{\mathrm{d}(o)}\mathbf{P}_{v}^G\big(\{x_i\}_{i> n}\cap\{Y_j\}_{0\leq  j\leq m}=\emptyset,\, Y_m=o\big) \!\right)\mathbf{P}_{v}^G\big(\{x_i\}_{i\geq n}\cap\{Y_j\}_{j>0}=\emptyset\big)
        \\
	&=\frac{\mathrm{d}(v)}{\mathrm{d}(o)}\mathbf{P}_{o}^G\big(\{x_i\}_{i> n}\cap \{Y_j\}_{j<0}=\emptyset \big) \mathbf{P}_{v}^G\big(\{x_i\}_{i> n}\cap\{Y_j\}_{-m\leq j\leq 0}=\emptyset,\, Y_{-m}=o\big) \mathbf{P}_{v}^G\big(\{x_i\}_{i\geq n}\cap\{Y_j\}_{j>0}=\emptyset\big)
  \\	
	&=\frac{\mathrm{d}(v)}{\mathrm{d}(o)} \mathbf{P}_{v}^G\big(\{x_i\}_{i> n}\cap\{Y_j\}_{j\leq 0}=\emptyset,\, Y_{-m}=o\big) \mathbf{P}_{v}^G\big(\{x_i\}_{i\geq n}\cap\{Y_j\}_{j>0}=\emptyset\big)\\	
	&=\frac{\mathrm{d}(v)}{\mathrm{d}(o)} \mathbf{P}_{v}^G\big(Y_{-m}=o,\, \{x_i\}_{i\geq n}\cap\{Y_j\}_{j>0}=\emptyset,\, \{x_i\}_{i> n}\cap\{Y_j\}_{j\leq 0}=\emptyset\big),
	\end{align*}
	where the first equality follows by independence of $\{Y_j\}_{j<0}$ and $\{Y_j\}_{j>0}$ and the Markov property of $\{Y_j\}_{j>0}$, the second equality follows by time-reversal for $\{Y_j\}_{j>0}$, the third equality follows as $\{Y_j\}_{j<0}$ and $\{Y_j\}_{j>0}$ are identically distributed, the penultimate inequality follows by the Markov property, and the final equality follows by independence of $\{Y_j\}_{j<0}$ and $\{Y_j\}_{j>0}$. 
Now, since $X$ and $Y$ are independent, letting $x=X$ gives
\begin{multline*}
\mathbf{P}_{o,o}^G(X_n=Y_m=v,\{X_i\}_{i\geq n}\cap\{Y_j\}_{j>m}=\emptyset,\{X_i\}_{i> n}\cap\{Y_j\}_{j\leq m}=\emptyset\big)\\ =\frac{\mathrm{d}(v)}{\mathrm{d}(o)}\mathbf{P}_{o,v}^G(X_n=v,Y_{-m}=o,\, \{X_i\}_{i\geq n}\cap\{Y_j\}_{j>0}=\emptyset,\,\{X_i\}_{i> n}\cap\{Y_j\}_{j\leq 0}=\emptyset\big),
\end{multline*}
 and applying a similar time-reversal to $X$ gives that
 \begin{multline*}
\mathbf{P}_{o,o}^G(X_n=Y_m=v,\{X_i\}_{i\geq n}\cap\{Y_j\}_{j>m}=\emptyset,\{X_i\}_{i> n}\cap\{Y_j\}_{j\leq m}=\emptyset\big)\\ =\frac{\mathrm{d}(v)^2}{\mathrm{d}(o)^2}\mathbf{P}_{v,v}^G(X_{-n}=Y_{-m}=o,\{X_i\}_{i\geq 0}\cap\{Y_j\}_{j>0}=\emptyset,\{X_i\}_{i> 0}\cap\{Y_j\}_{j\leq 0}=\emptyset).
\end{multline*}
The claim follows by substituting this into \eqref{eq:sum_decomp}.
    \end{proof}

    \begin{proof} [Proof of Proposition \ref{prop:EToP}]
    The claim holds trivially when $G$ is recurrent, so we may assume that $G$ is transient. Let $(X_n)_{n\in Z}$ and $(Y_n)_{n\in \Z}$ be doubly-infinite random walks started at $o$ that are conditionally independent given $(G,o)$. We assume that $\mathbb{E}\left[\#\{i,j\geq0:X_i=Y_j\}\mid (G,o),Y \right]=\infty$ almost surely and prove that in this case $\#\{i,j\geq0:X_i=Y_j\}=\infty$ almost surely.

    Recall that $B_{n,m}$ denotes the event that $\operatorname{lex-max} \{(n,m):X_n=Y_m\}=(n,m)$.
	Multiplying both sides of the identity of Lemma~\ref{lem:lex_max_identity} by $\deg(o)^2$, taking expectations and applying the mass-transport principle to the right-hand side gives
	\begin{align*}
		\mathbb{E}\big[\deg(o)^2\mathbbm{1}(B_{n,m})\big] &\geq \E{\sum_{v\in G}\mathbf{P}_{v,v}^G(X_{-n}=Y_{-m}=o,\{X_i\}_{i\geq 0}\cap\{Y_j\}_{j>0}=\emptyset,\{X_i\}_{i> 0}\cap\{Y_j\}_{j\leq 0}=\emptyset)}
        \\&= \E{\sum_{v\in G}\mathbf{P}_{o,o}^G(X_{-n}=Y_{-m}=v,\{X_i\}_{i\geq 0}\cap\{Y_j\}_{j>0}=\emptyset,\{X_i\}_{i> 0}\cap\{Y_j\}_{j\leq 0}=\emptyset)},
	\end{align*}
    where we bounded $\deg(v)\geq 1$ in the first line.
	Summing over $n,m \geq 0$ and using that the events $B_{n,m}$ are disjoint, we obtain that
	\begin{align*}
\E{\#\{i,j\leq0:X_i=Y_j\}\mathbbm{1}\big(\{X_i\}_{i\geq 0}\cap\{Y_j\}_{j>0}=\emptyset,\{X_i\}_{i> 0}\cap\{Y_j\}_{j\leq 0}=\emptyset\big)} \leq \mathbb{E}[\deg(o)^2] < \infty.
	\end{align*}
    Conditioning on the random rooted graph $(G,o)$ and the two-sided walk $Y$, conditional independence of $(X_i)_{i\leq 0}$ and $(X_i)_{i\geq 0}$ yields
	\begin{multline*}
        \mathbb{E}\Bigg[\E{\#\{i,j\leq0:X_i=Y_j\}\mid (G,o),Y}\cdot \pr\big(\{X_i\}_{i\geq 0}\cap\{Y_j\}_{j>0}=\emptyset,\{X_i\}_{i> 0}\cap\{Y_j\}_{j\leq 0}=\emptyset\mid(G,o),Y\big)\Bigg] < \infty
	\end{multline*}
    Since $\mathbb{E}\left[\#\{i,j\geq0:X_i=Y_j\}\mid (G,o),Y \right]=\infty$ almost surely by assumption, 
     the right hand side can only be finite if 
	\[
	\pr\big(\{X_i\}_{i\geq 0}\cap\{Y_j\}_{j>0}=\emptyset,\{X_i\}_{i> 0}\cap\{Y_j\}_{j\leq 0}=\emptyset\big)=0.
	\]
  Since the two events $\{\{X_i\}_{i\geq 0}\cap\{Y_j\}_{j>0}=\emptyset\}$ and $\{X_i\}_{i> 0}\cap\{Y_j\}_{j\leq 0}=\emptyset$ are conditionally independent given $(G,o)$ and $X$, and since $\pr\big(\{X_i\}_{i\geq 0}\cap\{Y_j\}_{j\leq 0}=\emptyset|(G,o),X)=\pr\big(\{X_i\}_{i\geq 0}\cap\{Y_j\}_{j\geq0}=\emptyset|(G,o),X)$, it follows that
\[
  \pr\big(\{X_i\}_{i\geq 0}\cap\{Y_j\}_{j>0}=\emptyset,\{X_i\}_{i> 0}\cap\{Y_j\}_{j\geq 0}=\emptyset\big)=0.
  \]
	In other words, two conditionally independent random walks $X$ and $Y$ started at $o\in G$ will almost surely satisfy $X_n=Y_m$ at some time $(n,m)$ with $n,m\geq 0$ and $(n,m)\neq (0,0)$. Since the random rooted graph $(G,o)$ is unimodular, the same statement holds almost surely for \emph{any} starting vertex $v\in G$ \cite[Proposition 11]{Curien}.
	 Now if $X$ and $Y$ are conditionally independent random walks on $G$ with arbitrary starting vertices, then the Markov properties of the random walks implies that for any $(n,m)\in\Z_{\geq 0}\times\Z_{\geq 0}$, the processes $(X_i)_{i\geq n}$ and $(Y_i)_{i\geq m}$ are jointly distributed as two conditionally independent random walks on $G$ started at $X_n$ and $Y_m$ respectively. In particular, together with our conclusion above, this implies that for any $n,m\geq 0$, the event 
   \[\{X_n\neq Y_m\}\cup\{\exists i\geq n,\, j\geq m: (i,j)\neq (n,m) \text{ and } X_i= Y_j\}\]
    occurs almost surely. If we now suppose that $X$ and $Y$ start at the same vertex, then we can use this fact inductively to construct two non-decreasing sequences of times $(T_i)_{i\geq 0}$ and $(S_i)_{i\geq 0}$ such that $(S_i+T_i)_{i\geq 0}$ is strictly increasing and $X_{T_i}=Y_{S_i}$ almost surely for every $i\geq 0$. Thus the proposition is proved.
\end{proof}

\begin{remark}
	This proposition certainly does not hold if the unimodularity assumption is removed. For instance, take two copies of $\Z^3$ attached by a single edge: The conditional expectation of the number of intersections is almost surely infinite, but the number of intersections has a positive probability of being finite due to the fact that the random walks may eventually remain in distinct copies of $\Z^3$. We are unsure if the analogous statement holds if we only require that $\mathbb{E}\left[\#\{i,j\geq0:X_i=Y_j\}\mid (G,o) \right]=\infty$ a.s.\ rather than $\mathbb{E}\left[\#\{i,j\geq0:X_i=Y_j\}\mid (G,o),Y \right]=\infty$ a.s.\
\end{remark}

\begin{remark}[Relaxing the second moment condition]
The proof of Proposition~\ref{prop:EToP} shows more generally that if $(G,o)$ is a unimodular random rooted graph with $\mathbb{E} [\deg(o)^\alpha]<\infty$ for some $0\leq \alpha \leq 2$ and $\mathbb{E} [\sum_{i,j=0}^\infty \mathbbm{1}(X_i=Y_j) \deg(Y_j)^{-2+\alpha}\mid (G,o),Y]=\infty$ almost surely then $G$ has the infinite intersection property almost surely.
\end{remark}

\subsection{Proof of Theorems~\ref{thm:spanning_tree} and \ref{thm:spanning_tree_general}}
\label{subsec:intersections_Zd}

In this section we complete the proof of Theorems~\ref{thm:spanning_tree} and \ref{thm:spanning_tree_general}, and hence also of Theorem~\ref{thm:uniqueness}, by proving the following proposition, which implies these theorems in conjunction with Proposition~\ref{prop:EToP} and Theorem~\ref{thm:resampling}.

\begin{proposition} \label{prop:dim34expinfinite}
	Let $1\leq d\leq 4$, let $(G,\phi,\rho)$ be a unimodular random random rooted subgraph of $\Z^d$, and let $X$ and $Y$ be two independent random walks on $G$ beginning at $\rho$. Then
	\begin{equation*} \label{eq:hyp} \mathbb{E}\left[\#\{i,j\geq0:X_i=Y_j\}\mid (G,\phi,\rho),Y \right]=\infty \text{ almost surely}.
	\end{equation*} 
\end{proposition}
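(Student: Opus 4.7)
The plan begins with the Fubini identity
\[
\mathbb{E}\bigl[\#\{i,j\ge 0:X_i=Y_j\}\bigm|(G,\phi,\rho),Y\bigr] = \sum_{j\ge 0}G_G(\rho,Y_j),
\]
where $G_G$ denotes the Green's function of simple random walk on $G$, reducing the problem to showing that this random sum is almost surely infinite. My approach is to combine diffusive Markov-type-$2$ bounds for the walks with Cauchy--Schwarz and support counting to produce first-moment lower bounds, and then pass to the almost-sure statement using the unimodularity of $(G,\phi,\rho)$.

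For the diffusive bound, I would apply the Markov-type-$2$ property of Hilbert space to the reversible walk $X$ on $G$ composed with the cocycle $\phi$; since $\phi$ sends each edge of $G$ to a unit $\ell_\infty$-vector in $\Z^d$, the squared increments of $\phi(X)$ are bounded by $d$, and the standard Markov-type-$2$ inequality yields $\mathbb{E}[\|\phi(\rho,X_n)\|_2^2\mid(G,\phi,\rho)]\le C(d)\,n$, with unimodularity used to transfer the classical stationary form of the bound to walks started at a fixed root. By Markov's inequality the range of $X_n$ under $\phi$ lies in a $\Z^d$-ball of radius $O(\sqrt{n})$ with probability at least $\tfrac12$, so the support of $X_n$ in $V[G]$ (injected into $\Z^d$ by $\phi$) has size at most $C_d n^{d/2}$ on that event. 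Cauchy--Schwarz then gives $\sum_v p_n^G(\rho,v)^2\ge c(d)\,n^{-d/2}$ on this event; extending to off-diagonal pairs via the semigroup and reversibility, summing over $(i,j)\in[n,2n]^2$, and then summing dyadically in $n$, yields the unconditional lower bound
\[
\mathbb{E}\bigl[\#\{i,j:X_i=Y_j\}\bigr]\ \gtrsim\ \sum_{k\ge 1}2^{k(4-d)}=\infty
\]
for every $d\le 4$.

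The main obstacle is promoting this divergent unconditional first moment to the almost-sure infinity of the conditional expectation $\sum_j G_G(\rho,Y_j)$. My plan is a Paley--Zygmund argument on the truncated intersection count $I_n=\#\{i,j\le n:X_i=Y_j\}$: paired with a matching second-moment upper bound $\mathbb{E}[I_n^2]\le C\,\mathbb{E}[I_n]^2$ uniform in $n$, Paley--Zygmund yields $\pr(\#\text{intersections}=\infty)>0$, which in turn forces the conditional expectation $\mathbb{E}[\#\text{intersections}\mid(G,\phi,\rho),Y]$ to equal $\infty$ on an event of positive probability. Kolmogorov's zero-one law on the tail $\sigma$-algebra of $Y$ given $(G,\phi,\rho)$ then forces this event to have conditional probability $0$ or $1$ almost surely given $(G,\phi,\rho)$, and ergodicity of the unimodular random rooted graph (after passing to its extremal decomposition) upgrades positive unconditional probability to almost sure. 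The technical heart is the second moment estimate, which I would handle by writing $I_n^2$ as a sum over quadruples of indices, using the strong Markov property to decompose into products of two-point functions, and then applying the mass-transport principle for $(G,\phi,\rho)$ together with the Markov-type displacement bounds to reduce the control back to the same diffusive estimates used for the first moment.
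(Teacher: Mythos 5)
Your proposal diverges from the paper's argument in a fundamental way, and the divergence introduces gaps that the paper is specifically engineered to avoid.

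The paper never computes the \emph{unconditional} first moment $\mathbb{E}[\#\{i,j:X_i=Y_j\}]$ nor any second moment. Instead it works directly with the conditional quantity $\sum_{v}G_{\Z_{\geq 0}}(v)L_{\Z_{\geq 0}}(v)$ (your $\sum_j G_G(\rho,Y_j)$), decomposing it over dyadic time-scales $[t_{n-1},t_n)$ with $t_n=4^n$ and dyadic boxes $\Lambda_n$ of radius $\asymp 2^n$. At each scale it uses the maximal Markov-type-$2$ inequality to confine the walk in $\Lambda_n$ with probability $\geq 1-\eps$, then a Markov-inequality argument that separates vertices with large vs.\ small partial Green's function $G_{[t_{n-1},t_n)}(v)$ to conclude that, with probability $\geq 1-2\eps$, the scale-$n$ contribution to the \emph{conditional} expectation is already $\geq c_3(\eps) \asymp 2^{(4-d)n}$. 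Reverse Fatou then gives that the conditional expectation is infinite with probability $\geq 1-2\eps$, and $\eps\downarrow 0$ finishes it. No second moment, no Paley--Zygmund, no zero-one law.

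Your route has two serious gaps. First, the second-moment bound $\mathbb{E}[I_n^2]\leq C\,\mathbb{E}[I_n]^2$ uniformly in $n$ is the technical heart and is left as a sketch; at $d=4$ the first moment only grows logarithmically, and matching second-moment estimates are delicate even on $\Z^4$ itself (this is essentially Erd\H{o}s--Taylor/Lawler territory). On a general unimodular subgraph of $\Z^4$, where you only have the on-diagonal and diffusive bounds coming from Markov type, it is far from clear that such a bound holds, and the paper's whole design avoids needing it. Second, the upgrade from ``positive probability'' to ``almost surely'' via Kolmogorov's zero-one law is not justified: the increments of $Y$ are not independent (they form a Markov chain whose transitions depend on $G$), so Kolmogorov does not apply to the tail $\sigma$-algebra of $Y$; moreover, the tail $\sigma$-algebra of a random walk on a fixed graph need not be trivial (e.g.\ on a two-ended graph). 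You would need something like a Blackwell--type tail triviality or the Liouville property, neither of which is available for free here. The paper sidesteps this entirely by proving the quenched lower bound directly with probability $1-O(\eps)$ at every scale, so no tail-triviality is ever invoked. I recommend revisiting the structure so that you bound $\sum_j G(\rho,Y_j)$ scale-by-scale rather than passing through the unconditional moments of the intersection count.
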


The proof of this proposition will apply the theory of \emph{Markov-type inequalities}, which were first introduced by Ball \cite{MR1159828} in the context of the Lipschitz extension problem and have since been found to have many important applications to the study of random walk. We now give a quick review of the parts of the theory most relevant to us, referring the reader to \cite[Chapter 13.4]{LP:book} for further background.

\medskip

\noindent \textbf{Markov-type inequalities.}
A metric space $\mathcal{X}=(\mathcal{X},d)$ is said to have \textbf{Markov-type} $2$ with constant $C<\infty$ if 
for every finite set $S$, every irreducible reversible Markov chain $M$ on $S$, and every function $f:S\rightarrow \mathcal{X}$ the inequality
\[
\mathbb{E}\left[d\big(f(Y_0),f(Y_n)\big)^2\right]\leq C^2n \mathbb{E}\left[d\big(f(Y_0),f(Y_1)\big)^2\right]
\] 
holds for every $n\geq 0$,
where $(Y_i)_{i\geq 0}$ is a trajectory of the Markov chain $M$ with $Y_0$ distributed as the stationary measure of $M$. Similarly, a metric space $\mathcal{X}=(\mathcal{X},d)$ is said to have \textbf{maximal Markov-type} $2$ with constant $C<\infty$ if for every finite set $S$ and every irreducible reversible Markov chain $M$ on $S$, and every function $f:S\rightarrow \mathcal{X}$, we have that 
\begin{equation}
\label{eq:maximal_Markov_type}
\mathbb{E}\left[\max_{0\leq i\leq n}d\big(f(Y_0),f(Y_i)\big)^2\right]\leq C^2n \mathbb{E}\left[d\big(f(Y_0),f(Y_1)\big)^2\right]
\end{equation}
for each $n\geq 0$,
where, as before, $(Y_i)_{i\geq 0}$ is a trajectory of the Markov chain $M$ with $Y_0$ distributed as the stationary measure of $M$. Of particular importance to us will be the fact that $\R$ has maximal Markov-type 2 \cite[Theorem 13.15]{LP:book}, which implies by projecting onto each coordinate that $\R^d$ has maximal Markov-type 2 with the same constant for each $d\geq 1$, which implies the following inequality for unimodular random rooted subgraphs of $\Z^d$.

\begin{proposition} \label{prop:hyper_markov}
	Let $d\geq 1$ and let $(G,\phi,\rho)$ be a unimodular random rooted subgraph of $\Z^d$. If $Y$ is a random walk on $G$ started at $\rho$ then
	\[
	\mathbb{E}\left[\deg(\rho)\max_{0\leq i\leq n}\norm{\phi(Y_{ik},Y_0)}_\infty^2\right]\leq C^2 n \mathbb{E}\Bigl[\deg(\rho) \|\phi(Y_k,Y_0)\|\Bigr].
	\] 
	for each $n,k\geq 1$. Since $\|\phi(Y_k,Y_0)\| \leq 1$ and $1\leq \deg(\rho)\leq 2d$, it follows in particular that
	\[
	\mathbb{E}\left[\max_{0\leq i\leq n}\norm{\phi(Y_i,Y_0)}_\infty^2\right]\leq 2d C^2 n = C_0(d)^2n
	\] 
	for each $n\geq 1$, where $C_0(d)=C\sqrt{2d}$.
\end{proposition}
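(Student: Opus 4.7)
The plan is to reduce the proposition to the classical maximal Markov-type 2 inequality for $\R$ applied coordinate-by-coordinate, using unimodularity to play the role of ``stationary initial distribution'' on the infinite and random setting.

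First, I would reduce to a coordinate-wise statement. Write $\phi = (\phi^{(1)},\dots,\phi^{(d)})$ and observe that $\|v\|_\infty^2\leq\sum_{j=1}^d v_j^2$, so it suffices to prove that for each coordinate $j$ the inequality
\[
\E{\deg(\rho)\max_{0\leq i\leq n}\phi^{(j)}(Y_{ik},Y_0)^2}\leq C^2 n\, \E{\deg(\rho)\phi^{(j)}(Y_k,Y_0)^2}
\]
holds, and then sum over $j$ and use $\sum_j\phi^{(j)}(Y_k,Y_0)^2\leq d\|\phi(Y_k,Y_0)\|_\infty^2$ together with $\|\phi(Y_k,Y_0)\|_\infty \leq 1$ and $\deg(\rho)\leq 2d$ to obtain the two displayed inequalities with the stated constants. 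For a fixed realisation of $(G,\phi,\rho)$, set $f_j(v):=\phi^{(j)}(v,\rho)$ so that $\phi^{(j)}(Y_i,Y_0)=f_j(Y_i)-f_j(Y_0)$; the coordinate-wise claim is then precisely a maximal Markov-type 2 bound for the increments of the function $f_j$ along the reversible walk $(Y_{ik})_{i\geq 0}$ on $G$.

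Second, I would invoke the maximal Markov-type 2 property of $\R$ (\cite[Theorem 13.15]{LP:book}) in a form suitable for stationary reversible walks on unimodular random rooted graphs. The subtle point is that \eqref{eq:maximal_Markov_type} is stated for a \emph{finite} chain started from its stationary measure $\pi(v)\propto\deg(v)$, whereas our walk starts from the deterministic root $\rho$ on a potentially infinite random graph $G$. However, the mass-transport principle implies that, for a unimodular random rooted graph, weighting the law by $\deg(\rho)/\E{\deg(\rho)}$ turns the augmented process $(G,\phi,Y_0),(G,\phi,Y_1),\ldots$ into a stationary reversible Markov chain on the space of rooted networks; this is exactly the analogue of starting from $\pi$ in the finite case, so that $\E{\deg(\rho)\,\cdot\,F}$ plays the role of $\E_\pi[F]$ in \eqref{eq:maximal_Markov_type}. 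Applied with $f_j$ and the $k$-step chain $(Y_{ik})_{i\geq 0}$ (which inherits reversibility with the same stationary measure), this immediately yields the coordinate-wise inequality above.

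Third, to handle the infinite state space, I would approximate by finite graphs. Exhaust $G$ by finite connected subgraphs $H_m\uparrow G$ (e.g.\ balls around $\rho$ with a reflecting boundary) and apply the finite-chain maximal Markov-type 2 inequality to $f_j$ on $(H_m,\rho)$ with its stationary walk; then let $m\to\infty$, using bounded convergence together with the fact that for any fixed $n,k$ the walk $(Y_i)_{i\leq nk}$ stays in $H_m$ with probability tending to $1$. The main obstacle is precisely this last step: one must simultaneously (a) identify the degree-biased unimodular measure with the stationary measure of a reversible chain of rooted graphs, and (b) certify that the Markov-type estimate passes through the exhaustion in spite of the random, possibly infinite, environment. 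Both of these are now standard manipulations in the unimodular random-graph literature, and combined they deliver the stated inequality.
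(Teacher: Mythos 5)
Your first two steps match the paper's intended route: project onto coordinates to reduce to $\R$ (the paper itself says $\R^d$ has maximal Markov-type $2$ ``by projecting onto each coordinate''), and observe that the mass-transport principle makes the degree-biased root law the stationary reversible measure for the rooted-graph walk. That much is correct.

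The gap is in the third step. You propose to exhaust by balls $H_m$ around $\rho$ (with reflecting boundary), apply the finite-chain maximal Markov-type $2$ bound to the \emph{stationary} walk on $H_m$, and then pass to the limit using that the walk from $\rho$ stays inside $H_m$ with high probability. These two walks are not comparable: the finite Markov-type inequality is for a walk whose initial state is distributed as the degree-biased uniform measure on $H_m$, which spreads over the entire ball, whereas the walk you want to control starts deterministically at the centre $\rho$. As $m\to\infty$ these laws do not converge to one another, so the estimate for the stationary walk on $H_m$ does not transfer to the walk from $\rho$ on $G$, and no ``stays in $H_m$'' coupling fixes this. The root of a ball around $\rho$ is emphatically not at a stationary position, so $(H_m,\rho)$ is not a unimodular finite rooted graph. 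What the paper actually uses is that unimodular random rooted subgraphs of $\Z^d$ are \emph{hyperfinite}, and in particular are Benjamini--Schramm limits of \emph{finite unimodular} random rooted subgraphs $(G_n,\rho_n)$ in which the root is uniform over the vertex set. In these finite graphs, the degree-biased root \emph{is} the stationary initial distribution, so the finite Markov-type inequality applies verbatim with the $\deg(\rho_n)$-weighting, and the estimate passes to the Benjamini--Schramm limit (as in \cite[Corollary 2.5]{MR4146545}). So the correct approximating objects are random finite unimodular graphs converging locally, not deterministic balls around the fixed root; replacing your exhaustion with this one closes the gap and recovers the paper's argument.
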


\begin{proof}[Proof of Proposition~\ref{prop:hyper_markov}]
This follows from the standard maximal Markov type inequality \eqref{eq:maximal_Markov_type} by using that unimodular random rooted subgraphs of $\Z^d$ are \emph{hyperfinite}. This means in particular that they can always be written as Benjamini-Schramm limits of \emph{finite} random rooted subgraphs of $\Z^d$, which are finite reversible Markov chains whose stationary measure is proportional to their degree. The details are very similar to the proof of \cite[Corollary 2.5]{MR4146545} and are omitted. 
\end{proof}

\begin{proof} [Proof of Proposition \ref{prop:dim34expinfinite}]
	Fix $\varepsilon\in(0,1)$ and let $C_0=C_0(d)$ be the constant from Proposition \ref{prop:hyper_markov}. Define constants $c_1=\sqrt{2C_0/\varepsilon}$ and $c_2=2/\varepsilon$, and define  sequences of times $t_n=4^n$, radii $r_n=\lceil c_1\cdot 2^n\rceil $, and Euclidean boxes $\Lambda_n=[-r_n,r_n]^d\subset\mathbb{R}^d$. Proposition \ref{prop:hyper_markov} and Markov's inequality give us that for each $n\geq 1$,
	\begin{equation} \label{eq:box_bound}
		\pr(\{\phi(\rho,Y_i)\}_{i\leq t_n}\subset \Lambda_n)\geq 1-\varepsilon.
	\end{equation}
	For each subset $A\subseteq\Z_{\geq 0}$ and $v\in\Z^d$, define the random variable $L_A(v)=\sum_{n\in A}\mathbbm{1}(Y_n=v)$ giving the number of times $i$ in $A$ such that $Y_i=v$, and define the partial Green's function $G_A(v)=\mathbb{E}^G[L_A(v)]$.
	We lower bound
	\begin{equation} \label{eq:lower_b}
		\mathbb{E}\left[\#\{i,j\geq0:X_i=Y_j\}\mid (G,o),Y \right]=\sum_{v\in G} G_{\Z_{\geq 0}}(v)L_{\Z_{\geq 0}}(v)\geq \sum_{n\geq 1}\sum_{v\in\Lambda_n} G_{[t_{n-1},t_n)}(v)L_{[t_{n-1},t_n)}(v),
	\end{equation}
where we write $v\in\Lambda_n$ as shorthand for $\phi(\rho,v)\in\Lambda_n$.
We aim to show that each sum over $\Lambda_n$ has good probability to contribute a constant to the total.
	To this end, for each $n\geq 1$ let $b_n=2^{-n(d-2)}/(4c_2(4c_1)^d)$ and let 
   \[U_n=\sum_{v\in\Lambda_n} L_{[t_{n-1},t_n)}(v)\mathbbm{1}( G_{[t_{n-1},t_n)}(v)< b_n).\] We can bound
	\begin{multline} 
			\sum_{v\in\Lambda_n} G_{[t_{n-1},t_n)}(v)L_{[t_{n-1},t_n)}(v) \geq \sum_{v\in\Lambda_n} G_{[t_{n-1},t_n)}(v)L_{[t_{n-1},t_n)}(v) \mathbbm{1}( G_{[t_{n-1},t_n)}(v)\geq b_n)\\
			\geq b_n \sum_{v\in\Lambda_n} L_{[t_{n-1},t_n)}(v) \mathbbm{1}( G_{[t_{n-1},t_n)}(v)\geq b_n)=b_n\Bigg[\sum_{v\in\Lambda_n} L_{[t_{n-1},t_n)}(v)-U_n\Bigg],\label{eq:split_decomp}
	\end{multline}
   and also have trivially that
	\begin{align*}
		\mathbf{E}^{G}[U_n]=\mathbf{E}^{G}\left[\sum_{v\in\Lambda_n} L_{[t_{n-1},t_n)}(v)\mathbbm{1}( G_{[t_{n-1},t_n)}(v)< b_n)\right]&=\sum_{v\in\Lambda_n}G_{[t_{n-1},t_n)}(v)\mathbbm{1}( G_{[t_{n-1},t_n)}(v)< b_n) \leq b_n\abs{\Lambda_n},
	\end{align*}
where we write $\abs{\Lambda_n}$ for the number of vertices $v\in G$ such that $\phi(\rho,v)\in \Lambda_n$. Since we also have that $\sum_{v\in\Lambda_n} L_{[t_{n-1},t_n)}(v) \geq t_n-t_{n-1}$ on the event that $\{\phi(\rho,Y_i)\}_{i\leq t_n}\subseteq \Lambda_n$, we have by \eqref{eq:box_bound} and  Markov's inequality  that
	\begin{equation} \label{eq:U_bound}
		\pr\left(\sum_{v\in\Lambda_n} L_{[t_{n-1},t_n)}(v) \geq t_n-t_{n-1}\right) \geq 1- \eps \qquad \text{ and } \qquad \pr\bigl(U_n \leq c_2 b_n \abs{\Lambda_n}\bigr)\geq 1-\varepsilon
	\end{equation}
   for every $n\geq 1$. Since $c_2 b_n |\Lambda_n| \leq (t_n-t_{n-1})/2$ by choice of $b_n$, it follows from this and \eqref{eq:split_decomp} that
   \[
\mathbb{P}\left(\sum_{v\in\Lambda_n} G_{[t_{n-1},t_n)}(v)L_{[t_{n-1},t_n)}(v) \geq \frac{b_n (t_n-t_{n-1})}{2} \right) \geq 1-2\eps
   \]
   for every $n\geq 1$.
Now, we also have that $b_n (t_n-t_{n-1})$ is of order $2^{(4-d)n}$ and hence, since $d\leq 4$, that $\frac{b_n (t_n-t_{n-1})}{2}$ is bounded below by a positive constant $c_3=c_3(\eps)$. Fatou's lemma then implies that
\[
\mathbb{P}\left(\sum_{v\in G} G_{\Z_{\geq 0}}(v)L_{\Z_{\geq 0}}(v) = \infty\right) \geq \mathbb{P}\left(\limsup_{n\to\infty}\sum_{v\in\Lambda_n} G_{[t_{n-1},t_n)}(v)L_{[t_{n-1},t_n)}(v) \geq c_3(\eps)\right) \geq 1-2\eps
\]
for every $\eps>0$, and the claim follows since $\eps>0$ was arbitrary.
\end{proof}
\begin{proof}[Proof of Theorem \ref{thm:resampling}]
This is an immediate consequence of Propositions \ref{prop:EToP} and \ref{prop:dim34expinfinite}.
\end{proof}

\begin{proof}[Proof of Theorem \ref{thm:spanning_tree}]
This is an immediate consequence of Theorems \ref{thm:resampling} and \ref{thm:spanning_tree}.
\end{proof}
\begin{remark}
	For $d<4$ we can substitute the use of the Markov-type inequalities  in the proof of Proposition \ref{prop:dim34expinfinite} with the Varopoulos-Carne inequality, which implies that the maximal displacement bound $\max_{i\leq n}d(X_0,X_i)$ has order at most $\sqrt{n\log n}$ with high probability on any graph of at most polynomial volume growth.  As such,  Proposition \ref{prop:dim34expinfinite} and thus Theorem \ref{thm:spanning_tree} generalises easily to unimodular random rooted graphs whose balls have volume $O(n^d)$ for some $d<4$ (and with $\mathbb{E}[\deg(\rho)^2]<\infty$), without the need to have a unimodular embedding into $\Z^d$. The four-dimensional case is more delicate since this dimension is critical for $\Z^d$ to have the infinite intersection property, with each dyadic scale only contributing $O(1)$ intersections in expectation. We believe that it should be possible to extend Theorem~\ref{thm:spanning_tree} to unimodular random rooted graphs whose balls have volume $O(n^4)$ using the methods of
    Ganguly, Lee, and Peres \cite{MR3655957}, who proved that any unimodular random rooted graph of polynomial volume growth satisfies a diffusive estimate at infinitely many scales. To do this, one would need to improve their displacement estimate to a \emph{maximal} displacement estimate of the same order; we do not investigate this here.
\end{remark}

\addcontentsline{toc}{section}{References}

\setstretch{1}
\footnotesize{
	\bibliographystyle{abbrv}
	\bibliography{ArborealBib.bib}
}
	
\medskip

\noindent \sc{N.\ Halberstam: CCIMI, University of Cambridge,} \email{nh448@cam.ac.uk}\\
\noindent \sc{T.\ Hutchcroft: PMA, Caltech} \email{t.hutchcroft@caltech.edu}

\end{document}